\newcommand{\g}{\mathfrak{g}}
\newcommand{\gl}{\mathfrak{gl}}
\def\al{\alpha}
\newcommand{\Z}{{\mathbb Z}}
\newcommand{\bC}{{\mathbb C}}
\newcommand{\N}{{\mathbb N}}
\newcommand{\Ker}{{\rm Ker}}
\renewcommand{\phi}{\varphi}
\renewcommand{\leq}{\leqslant}
\renewcommand{\geq}{\geqslant}
\renewcommand\sl{{\mathfrak{sl}}}
\newcommand{\ptl}{\partial}
\newcommand{\ls}{\leqslant}
\newcommand{\m}{\mathfrak{m}}
\newcommand{\tW}{\tilde{W}}
\newcommand{\mh}{\mathfrak h}
\newcommand{\ann}{{\rm ann}}
\newtheorem{theorem}{Theorem}[section]
\newtheorem{lemma}[theorem]{Lemma}
\newtheorem{corollary}[theorem]{Corollary}
\newtheorem{example}[theorem]{Example}
\numberwithin{equation}{section}
\begin{document}
\title[Simple  Witt modules]{Classification of simple bounded weight modules of the Lie algebra of vector fields on $\bC^n$}
\author{Yaohui Xue, Rencai L\"u }

\date{}\maketitle

\begin{abstract}
Let $W_n^+$ be the Lie algebra of the Lie algebra of vector fields on $\bC^n$. In this paper, we classify all simple bounded weight $W_n^+$ modules. Any such module is isomorphic to the simple quotient of a tensor module $F(P,M)=P\otimes M$ for  a simple weight module $P$ over the Weyl algebra $K_n^+=\bC[t_1,\ldots,t_n,\frac{\partial}{\partial t_1},\ldots, \frac{\partial}{\partial t_n}]$ and a finite dimensional  simple $\gl_n(\bC)$ module $M$.  \end{abstract}
\vskip 10pt \noindent {\em Keywords:}   Simple module, $\gl_{n}$,  weight module, Weyl algebra, Witt algebra

\vskip 5pt
\noindent
{\em 2010  Math. Subj. Class.:}
17B10, 17B20, 17B65, 17B66, 17B68

\vskip 10pt

\section{Introduction}

We denote by $\mathbb{Z}$, $\mathbb{Z}_+$, $\Z_-$, $\mathbb{N}$ and
$\mathbb{C}$ the sets of  all integers, nonnegative integers, nonpositive integers,
positive integers and complex numbers, respectively.

For any $n\in \N$, let $A_n^+$ and $A_n$  be the polynomial algebra $\bC[t_1,\dots,t_n]$ and the Laurent polynomial algebra $\bC[t_1^{\pm 1},\dots,t_n^{\pm 1}]$ respectively.  The Witt algebras $W_n^+$ and $W_n$ are the Lie algebras of derivations of $A_n^+$ and $A_n$ respectively.
$W_n^+$ (resp. $W_n$) is also the Lie algebra of vector fields on $\bC^n$ (resp. the torus ${\mathbb{T}}^n$).  

Representation theory of Witt algebra $W_{n}$ has been well-developed. Simple weight modules with finite-dimensional weight spaces (also called Harish-Chandra modules) for the Virasoro algebra (which is the universal central extension of $W_{1}$) were conjectured by V. Kac  in \cite{K2} and  classified by O. Mathieu in \cite{Ma}. Such modules for $W_n$ were conjectured by E. Rao in \cite{E2} and classified by Y. Billig and V. Futorny in \cite{BF1}. The $A_{n}$-cover method developed in \cite{BF1} turns out to be extremely useful. And the representation theory for solenoidal Lie algebras  (also called the centerless higher rank Virasoro algebras ) developed in \cite{Su1,LZ2,BF2} serves as a bridge between $W_1$ and $W_n$ in \cite{BF1}. Very recently, simple weight modules with finite-dimensional weight spaces for Witt superalgebra are classified in \cite{XL, Liu1}. For more related results, we refer the readers to \cite{BF2, E1, E2, Sh} and the references therein.

The first classification result on representations of $W_n^+$  was obtained by A. Rudakov in 1974-1975, see \cite{R1}, \cite{R2}. A. Rudakov’s main result, roughly speaking, classified all irreducible representations which satisfy a natural continuity condition.  Simple weight modules with finite-dimensional weight spaces  for $W_1^+$ were classified also by O. Mathieu in  \cite{Ma}. The classification of such modules for $W_n^+$ ($n\ge 2$) has been a long-standing open problem after that and turns out to be  very hard.  In 1999, a complete description of the supports of all simple weight modules of $W_n^+$ was given by I. Penkov and V. Serganova in \cite{PS}. 

As we know,  the classification of bounded weight modules is one of the most important step in the classification of simple weight modules with finite weight spaces over various Lie (super)algebras, such as simple finite-dimensional Lie algebras and $W_n$. In 2016,  A. Cavaness and D. Grantcharov classified all simple bounded weight modules of $W_2^+$ in \cite{CG}. In \cite{LLZ}, the tensor module (also called Shen-Larsson module) $F(P,M)=P\otimes M$ for  a simple weight module $P$ over the Weyl algebra $K_n^+=\bC[t_1,\ldots,t_n,\frac{\partial}{\partial t_1},\ldots, \frac{\partial}{\partial t_n}]$ and a  simple weight $\gl_n(\bC)$ module $M$ were defined and studied. These kind of weight modules provided a lot of simple weight $W_n^+$ modules with various support sets.  In this paper, we classify all simple bounded weight $W_n^+$ modules. Any such module is isomorphic to the simple quotient of a tensor module $F(P,M)=P\otimes M$ for  a simple weight module $P$ over the Weyl algebra $K_n^+=\bC[t_1,\ldots,t_n,\frac{\partial}{\partial t_1},\ldots, \frac{\partial}{\partial t_n}]$ and a finite dimensional  simple $\gl_n(\bC)$ module $M$. Our method is different from that of \cite{CG} for $n=2$. The method and result in this paper will be used to study the representations of various Lie algebras, see \cite{HL,LX}.

Let $\bC^{n\times 1}$ be the natural $n$-dimensional representation of $\gl_n$ and let $V(\delta_k,k)$ be its
$k$-th exterior power, $k = 0,\ldots,n$. For any simple weight $K_n^+$ module $P$ and $k=1,2,\ldots n$, the tensor module $F(P,V(\delta_k,k))$ has a simple submodule $L_n(P,k)={\rm span}\{\sum_{i=1}^n(\partial_i  v)\otimes(e_i^T\wedge u)|v\in P,u\in V(\delta_{k-1},k-1)\},$ where $e_1,\ldots,e_n$ is the standard basis of $\bC^n$.

The main result in this paper is as follows.
%\begin{theorem}\label{main1}
%Suppose that $V$ is a simple bounded weight $AW_n^+$ module. Then $V$ is isomorphic to a tensor module $F(P,M)=P\otimes M$, where $P$ is a simple weight $K_n^+$ module, $M$ is a simple finite-dimensional $\gl_n$ module.

% and the action of $\tilde{W}_n^+$ is given by
%\begin{eqnarray}
%t^\alpha\frac{\partial}{\partial t_i}\cdot(v\otimes u)=\sum_{j=1}^n(\frac{\partial t^{\alpha}}{\partial t_j} v)\otimes(E_{ji} u)+(t^\alpha\partial_i v)\otimes u\\
%t^\alpha\cdot(v\otimes u)=(t^\alpha v)\otimes u,\forall \alpha\in\Z_+^n,i\in\{1,\dots,n\},v\in P,u\in M.
%\end{eqnarray}
%\end{theorem}

\begin{theorem}\label{main2}
Any simple bounded weight $W_n^+$ module $V$ is isomorphic to one of the following simple bounded $W_n^+$ weight modules:
\begin{itemize}
\item[(a)]  the one-dimensional trivial module;

\item[(b)] the tensor module $F(P,M)$, where $P$ is a simple weight $K_n^+$ module and $M$ is a simple finite dimensional $\gl_n$ module that is not isomorphic to $V(\delta_l,l),l=0,1,\dots,n$;
\item[(c)] $L_n(P,l)$, where $l\in \{1,2,\dots,n\}$ and $P$ is a simple weight $K_n^+$ module.
%\item[(c)] $V \cong P$, where $P$ is a simple weight $K_n^+$ module that is not isomorphic to the natural $K_n^+$ module $A_n^+$;

\end{itemize}
\end{theorem}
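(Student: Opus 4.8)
The plan is to transport the problem to the Weyl algebra $K_n^+$ through a cover construction in the spirit of the $A_n$-cover of Billig and Futorny, and then to read off the answer from the representation theory of $K_n^+$ and $\gl_n$. Throughout I use the $\Z$-grading
\[
W_n^+=\bigoplus_{d\geq -1}(W_n^+)_d,\qquad (W_n^+)_{-1}=\spn\{\partial_1,\dots,\partial_n\},\quad (W_n^+)_0=\spn\{t_i\partial_j\}\cong\gl_n,
\]
under which $t^\alpha\partial_j$ has degree $|\alpha|-1$ and $\mh$-weight $\alpha-e_j$, where $\mh=\spn\{t_i\partial_i\}$. If $V$ is the trivial module we are in case (a); otherwise I assume $V$ is nontrivial, so that after a weight shift its support is a single coset of $\Z^n$ and $\dim V_\mu\leq N$ for all weights $\mu$.

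First I would build the cover. Form the semidirect product $\widehat{\g}=W_n^+\ltimes A_n^+$, in which $A_n^+$ is an abelian ideal and $D\in W_n^+$ acts on $f\in A_n^+$ by $[D,f]=D(f)$. The goal is to promote the $W_n^+$-action on $V$ to a bounded $\widehat{\g}$-module $\widehat V$ together with a surjection $\widehat V\twoheadrightarrow V$; boundedness is precisely what lets one define the multiplication operators for $A_n^+$, since once the weight multiplicities are uniformly bounded the high-degree components of the $W_n^+$-action are eventually determined by the low-degree ones. Because $\partial_i\in W_n^+$ and the $t_i$ now act through $A_n^+$, the relation $[\partial_i,t_j]=\delta_{ij}$ in $\widehat{\g}$ (with the constant $1\in A_n^+$ acting as the identity) forces the operators on $\widehat V$ to generate an action of the full Weyl algebra $K_n^+=\bC[t_1,\dots,t_n,\partial_1,\dots,\partial_n]$; the compatibility that each vector field $t_i\partial_j\in(W_n^+)_0$ act as the Weyl-algebra product $t_i\cdot\partial_j$ up to a degree-$0$ correction is exactly what the cover is designed to encode.

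Next I would identify the structure of the cover. A bounded $\widehat{\g}$-module on which $A_n^+$ acts in this compatible way should, by a PBW / associated-graded argument, be isomorphic to a tensor module $F(P,M)=P\otimes M$: the $K_n^+$-action concentrates on a simple weight $K_n^+$-factor $P$, while the discrepancy between the genuine $W_n^+$-action and the bare product action $t_i\cdot\partial_j$ is recorded by the residual $\gl_n$-action on a factor $M$. The uniform bound $\dim V_\mu\leq N$, transported to $\widehat V$ and then to $F(P,M)$, forces $M$ to be finite-dimensional, since an infinite-dimensional weight $\gl_n$-module would make the weight multiplicities of $P\otimes M$ unbounded; hence $M$ is a simple finite-dimensional $\gl_n$-module. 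Descending along $\widehat V\twoheadrightarrow V$ and using simplicity of $V$, I conclude that $V$ is the simple quotient of some $F(P,M)$.

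Finally I would separate the exceptional $\gl_n$-modules. When $M\not\cong V(\delta_l,l)$ for any $l$, the tensor module $F(P,M)$ has a unique simple quotient isomorphic to $V$, giving case (b). When $M\cong V(\delta_l,l)$ is an exterior power, the modules $F(P,\textstyle\bigwedge^\bullet\bC^n)$ assemble into a de Rham--type complex whose differentials realize the submodules $L_n(P,l)=\spn\{\sum_i(\partial_i v)\otimes(e_i^T\wedge u)\}$, and $V$ must coincide with one of these simple constituents, which is case (c). The main obstacle is the cover construction itself: proving that the $A_n^+$-action is well defined on a cover of $V$ and that the resulting bounded $\widehat{\g}$-module is genuinely a tensor module, for this is where boundedness must be converted into the rigid algebraic structure $F(P,M)$. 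The finite-dimensionality of $M$ and the de Rham bookkeeping for the exterior powers are comparatively routine once the cover is in hand.
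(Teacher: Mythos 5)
Your outline follows the same route as the paper: a Billig--Futorny-style $A_n^+$-cover, then a classification of simple bounded weight $AW_n^+$ modules as tensor modules $F(P,M)$, then the known sub-quotient structure of $F(P,V(\delta_l,l))$ from \cite{LLZ}. However, the two steps that carry essentially all of the difficulty are asserted rather than proved, and you acknowledge this yourself (``the main obstacle is the cover construction itself''). The first genuine gap is the boundedness of the cover. The paper defines $\hat V=(W_n^+\otimes V)/K(V)$ and proves $\hat V$ is bounded by producing a single $m$ such that the elements
\[
\omega_{\alpha,\beta}^{m,j,l,p}=\sum_{i=0}^m(-1)^i{m\choose i}\,t^{\alpha+(m-i)e_j}\partial_l\cdot t^{\beta+ie_j}\partial_p
\]
annihilate $V$ for all $\alpha,\beta\in\Z_+^n$ and all $j,l,p$; these relations are what force $W_n^+\otimes V=S\otimes V+K(V)$ for a finite-dimensional subspace $S$, which is the precise content of your phrase ``the high-degree components of the action are determined by the low-degree ones.'' Establishing them is the technical core of the paper: it needs an identity in $U(W_1^+)$ expressing $(k+1)(s+1)\,\omega_{k,s}^{(4m)}$ through products of $\omega^{(m)}$'s (so that one can induct on composition length, starting from the length-one case given by Mathieu's classification and Billig--Futorny's lemma), followed by a lengthy bootstrap, bracketing repeatedly with elements $t^\gamma\partial_l$, to pass from the one-variable operators $\omega^{m,j,j,j}$ to all mixed indices $(j,l,p)$. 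Nothing in your proposal substitutes for this.

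The second gap is the structure theorem for simple bounded $AW_n^+$ modules, which you compress into ``a PBW / associated-graded argument.'' The paper's actual mechanism is: inside $\bar U=U(\tilde W_n^+)/\mathcal{I}$ one exhibits explicit elements $X_{\alpha,i}=\sum_{0\leqslant\beta\leqslant\alpha}(-1)^{|\beta|}{\alpha\choose\beta}t^\beta\cdot t^{\alpha-\beta}\partial_i$ whose span $T$ centralizes both $A_n^+$ and $\Delta$, giving an algebra isomorphism $\bar U\cong K_n^+\otimes U(T)$; then $T\cong\m\Delta$ as Lie algebras; and---this is the point your sketch treats as automatic---one must prove that every simple weight $\m\Delta$ module is annihilated by $\m^2\Delta$ (a grading argument on the eigenvalues of $d_1+\dots+d_n$), so the ``discrepancy factor'' $M$ is a module not over the infinite-dimensional Lie algebra $\m\Delta$ but over $\gl_n\cong\m\Delta/\m^2\Delta$. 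Two smaller points: the cover is not simple, so to descend to $V$ you need $\hat V$ to have finite length as an $AW_n^+$ module (the paper bounds the length by $N\cdot 2^n$ for bounded modules supported in one coset of $\Z^n$) before you can extract the composition factor $F(P,M)$ mapping onto $V$; and in case (b) the module $F(P,M)$ is itself simple, so $V\cong F(P,M)$---it is not merely that $F(P,M)$ has a unique simple quotient.
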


Note that the simple weight $K_n^+$ modules and simple finite dimensional $\gl_n$ modules are known. This theorem gives an explicit description of all simple bounded weight $W_n^+$ modules.

The paper is arranged as follows. In Section 2, we collect some basic notations and results for later use. In Section 3, we show that all simple weight $AW_n^+$ modules with a finite dimensional weight space are the tensor modules, see Theorem \ref{classi-1}. Then we have the classification of all bounded weight $AW_n^+$ modules, see Theorem \ref{cusAW}. In section 4, we give the proof of Theorem \ref{main2}. This is achieved by developing the $A_n^+$-cover theory for $W_n^+$ modules and using the results in Section 3.

\section{Denotations and Preliminaries}

%We fix the vector space $\mathbb{C}^n$ of $n\times 1$ complex  matrices.
%Denote the standard basis by $\{e_1,e_2,...,e_n\}$.
In this section, we collect some basic notations and results for later use.
Let $\ptl_{i}=\frac{\ptl}{\ptl{t_i}}$ and $d_i=t_i\frac{\ptl}{\ptl{t_i}}$ for any $i=1, 2, \cdots, n$.
Recall that  the Witt algebra $ W_n=\sum_{i=1}^n{ A}_n\ptl_{i}$ has the following Lie bracket:
$$[\sum\limits_{i=1}^{n}f_{i}\ptl_{i},\sum\limits_{j=1}^{n}g_j\ptl_{j}]=
\sum\limits_{i,j=1}^{n}(f_{j}\ptl_{j}
(g_{i})-g_{j}\ptl_{j}(f_{i}))\ptl_{i},$$
where all $f_{i}, g_j\in A_n$, and the classical Witt algebra $ W_n^+=\sum_{i=1}^n{ A}_n^+\ptl_{i}$ is a Lie subalgebra of $W_n$.
The Weyl algebras $K_n^+$ and $K_n$ are the unital associative algebras $\bC[t_1,\cdots,t_n,\partial_{1},\cdots,\partial_{n}]$ and $\bC[t_1^{\pm 1},\dots,t_n^{\pm 1},\partial_1,\dots,\partial_n]$, respectively. 
For any $\alpha\in\Z^n$, write $\alpha=(\alpha_1,\alpha_2,\ldots \alpha_n), t^\alpha=t_1^{\alpha_1}\dots t_n^{\alpha_n}$ and $\partial^\alpha=\partial_1^{\alpha_1}\dots\partial_n^{\alpha_n}$ for convenience.

It is well-known that both $W_n$ and $W_n^+$ are simple Lie algebras and   $D=\oplus_{i=1}^n\bC d_i$ is a
Cartan subalgebra  of $W_n$ and $W_n^+$ (a maximal abelian subalgebra that is diagonalizable on $W_n$ with respect to  the adjoint action). 

Let $\tilde{W}_n^+=W_n^+\ltimes A_n^+$ be the Lie algebra with
$$[t^\alpha\partial_i,t^\beta]=t^\alpha\partial_i(t^\beta),\ \forall \alpha,\beta\in\Z_+^n,\ i\in\{1,\dots,n\}.$$

A $\tilde{W}_n^+$ module $V$ is called an $AW_n^+$ module if $A_n^+$ acts on $V$ associatively, i.e.,
$$t^\alpha (t^\beta v)=t^{\alpha+\beta} v,\ t^0 v=v,\ \forall \alpha,\beta\in\Z_+^n,v\in V.$$

Let $\g$ be any Lie subalgebra of $\tW_n$ that contains $D$ or the associative algebras $K_n^+, K_n$.  A $\g$ module $V$ is called a weight module if the action of $D$ on $V$ is diagonalizable, i.e., $V=\oplus_{\alpha\in \bC^n }V_\alpha$, where $V_\alpha=\{v\in V\ |\ d_i v=\alpha_iv,\forall i=1,2,\ldots,n\}$ is called the weight space with weight $\alpha$. Denote by  ${\rm supp}(V)=\{\alpha\in \bC^n\ |\ V_\alpha\neq 0\}$ the support set of $V$. %For any $\alpha\in{\rm supp}(V)$, $\alpha$ is called a weight of $V$ with respect to $L$, and $V_\alpha$ is called the corresponding weight space.a weight module with the dimensions of its weight spaces all finite dimensional.

 %We will write $\lambda=(\lambda(t_1\partial_1),\ldots, \lambda(t_n\partial_n))$ and identify $H^*$ with $\bC^n$. 
%\begin{eqnarray*}
%\lambda\in H^*\ with\ (\lambda(t_1\partial_1),\dots,\lambda(t_n\partial_n))\in\bC^n;\\
%\mu\in\mathfrak{H}^*\ with\ (\mu(E_{1,1}),\dots,\mu(E_{n,n}))\in\bC^n.
%\nu\in H^*\ with\ (\nu(d_1),\dots,\mu(d_n))\in\bC^n.
%\end{eqnarray*}

A weight $\g$ module is called bounded if the dimensions of its weight spaces are uniformly bounded by a constant positive integer.

Let $K_{(i)}^+=\bC[t_i,\partial_i]$ be the subalgebra of $K_n^+$. Then each $K_{(i)}^+$ is isomorphic to $K_1^+$ and
$$K_n^+\cong K_{(1)}^+\otimes\dots\otimes K_{(n)}^+.$$
%Any simple weight $K_n^+$ module is isomorphic to $V_1\otimes\dots\otimes V_n$, where each $V_i$ is a simple weight $K_{(i)}^+$ module. 

\begin{lemma} \cite{FGM}\label{lem2.1} {\rm 1.} Any simple weight $\bC[t_i,\partial_i]$ modules is isomorphic to one of the following simple weight $\bC[t_i,\partial_i]$ modules:

$$t_i^{\lambda_i}\bC[t_i,t_i^{-1}], \bC[t_i], \bC[t_i,t_i^{-1}]/\bC[t_i],$$ where $\lambda_i\in \bC\setminus \Z$.

{\rm 2.} Let $P$ be any simple weight $K_n^+$ module, then $P\cong V_1\otimes \cdots \otimes V_n$, where  $V_i$ is a simple $\bC[t_i,\partial_i]$ module.
Therefore, the support set of any simple weight $K_n^+$ module is of the form $X=X_1\times\dots\times X_n$, where $X_i\in\{\lambda_i+\Z,\Z_+,-\N\}$, $\lambda_i\in \bC\setminus \Z$.
\end{lemma}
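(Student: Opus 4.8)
The plan is to reduce everything to the rank-one algebra $K_1^+=\bC[t,\ptl]$ and then recover the general case from the tensor decomposition $K_n^+\cong K_{(1)}^+\otimes\dots\otimes K_{(n)}^+$. For the rank-one classification in part~1, I would set $d=t\ptl$ and record the basic relations $t\ptl=d$, $\ptl t=d+1$, together with $[d,t]=t$ and $[d,\ptl]=-\ptl$, so that in any weight module $t$ raises the $d$-weight by $1$ and $\ptl$ lowers it by $1$. Consequently the support of a simple weight module lies in a single coset $\lambda+\Z$.

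The heart of part~1 is a weight-string analysis. First I would show every nonzero weight space is one-dimensional: the centralizer of $d$ in $K_1^+$ is the commutative algebra $\bC[d]$, and a standard argument (any $\bC[d]$-submodule of a fixed weight space generates, under the free action of $t$ and $\ptl$, a proper $K_1^+$-submodule) forces each weight space to be simple over $\bC[d]$, hence a single line. On $V_\mu$ one computes $t\ptl=\mu$ and $\ptl t=\mu+1$, so $\ptl\colon V_\mu\to V_{\mu-1}$ is an isomorphism unless $\mu=0$, and $t\colon V_\mu\to V_{\mu+1}$ is an isomorphism unless $\mu=-1$. Thus all the connecting maps are isomorphisms away from the two break points $\mu=0$ and $\mu=-1$. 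A short case analysis then finishes: if $\lambda\notin\Z$ there are no break points and $V\cong t^\lambda\bC[t,t^{-1}]$; if $\lambda\in\Z$ then simplicity rules out the full module $\bC[t,t^{-1}]$ (which has the proper submodule $\bC[t]$) and forces a break, giving either the lowest-weight module $\bC[t]$ with support $\Z_+$ or the highest-weight module $\bC[t,t^{-1}]/\bC[t]$ with support $-\N$.

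For part~2 I would exploit $K_n^+\cong K_{(1)}^+\otimes\dots\otimes K_{(n)}^+$ together with the fact that the $d_i$ commute and act diagonalizably. The decomposition $P\cong V_1\otimes\dots\otimes V_n$ into simple $K_{(i)}^+$-modules is an instance of the factorization of a simple module over a tensor product of algebras; because $P$ has countable dimension over the algebraically closed field $\bC$, Dixmier's version of Schur's lemma applies and the commutant of each tensor factor acts by scalars, which yields the outer tensor decomposition with each $V_i$ simple over $K_{(i)}^+$. Once this is in hand, ${\rm supp}(P)={\rm supp}(V_1)\times\dots\times{\rm supp}(V_n)$, and part~1 identifies each ${\rm supp}(V_i)$ as one of $\lambda_i+\Z$, $\Z_+$, $-\N$.

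I expect the genuine obstacle to be the tensor factorization in part~2 rather than the rank-one bookkeeping in part~1: establishing that an abstractly simple weight $K_n^+$-module is literally an (outer) tensor product of simple $K_{(i)}^+$-modules requires the density and Schur argument above, whereas part~1 is essentially an $\sl_2$-style string computation. I would therefore spend the most care on verifying the countable-dimension hypothesis (automatic for the weight modules here) and on checking that the scalar action of the commutants indeed assembles the $V_i$ into the claimed tensor product.
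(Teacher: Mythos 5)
Your part~1 is correct and is the standard string argument: the zero\nobreakdash-graded component of $\bC[t,\partial]$ under ${\rm ad}(t\partial)$ is $\bC[t\partial]$ (as $t^k\partial^k=d(d-1)\cdots(d-k+1)$), so weight spaces of a simple weight module are one-dimensional, and your break-point analysis at $\mu=0,-1$ yields exactly the three listed modules. The genuine gap is in part~2. The mechanism you propose --- simplicity plus countable dimension plus Dixmier's Schur lemma implies an outer tensor factorization over $K_{(1)}^+\otimes\dots\otimes K_{(n)}^+$ --- is false. Schur only gives ${\rm End}_{K_n^+}(P)=\bC$; the commutant of a single tensor factor is not scalar (it contains the image of all the other factors), and no density argument produces the factorization. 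Concretely, for $n=2$ let $M=\bC[t_1,t_2]$ with $t_1,t_2$ acting by multiplication and $\partial_1,\partial_2$ acting as $\frac{\partial}{\partial t_1}+t_2$ and $\frac{\partial}{\partial t_2}+t_1$ (that is, ``$\bC[t_1,t_2]e^{t_1t_2}$''). This is the twist of the simple $K_2^+$ module $\bC[t_1,t_2]$ by an automorphism of $K_2^+$, hence simple of countable dimension with scalar endomorphisms --- it satisfies every hypothesis you use. Yet $M$ is cyclic over $K_{(1)}^+\otimes 1$ (the vector $1$ generates, since $(\partial_1+t_2)^k\cdot 1=t_2^k$ and one then multiplies by powers of $t_1$), whereas any $V_1\otimes V_2$ restricted to $K_{(1)}^+\otimes 1$ is a direct sum of $\dim V_2$ copies of $V_1$ and can never be cyclic: $\dim V_2$ cannot be finite because $K_1^+$ has no nonzero finite-dimensional modules, and a generator of an infinite direct sum would lie in finitely many summands. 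Of course this $M$ is not a weight module ($d_1$ acts as $t_1\frac{\partial}{\partial t_1}+t_1t_2$), which is precisely the point: the weight hypothesis must enter the factorization step itself, not merely as a countability check.

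The correct mechanism is the one this paper uses for its own, analogous factorization in Theorem~\ref{classi-1}: a simple module over $A\otimes B$, with $B$ having countable basis, which \emph{contains a simple $B$-submodule} is isomorphic to $W\otimes V$ with $W$ a simple $A$ module (Lemma~\ref{WotimesV}); the weight structure is what produces that simple submodule. Take a weight vector $v\in P_\lambda$. Then $K_{(1)}^+v$ is a quotient of $K_{(1)}^+/K_{(1)}^+(t_1\partial_1-\lambda_1)\cong t_1^{\lambda_1}\bC[t_1,t_1^{-1}]$, which by your part~1 has length at most $2$; hence $P$ contains a simple $K_{(1)}^+$-submodule $V_1$, and Lemma~\ref{WotimesV} gives $P\cong W\otimes V_1$ with $W$ simple over $K_{(2)}^+\otimes\dots\otimes K_{(n)}^+$. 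Since each $d_i\otimes 1$, $i\geq 2$, acts diagonalizably on $W\otimes V_1$, its restriction to an invariant copy of $W$ is diagonalizable, so $W$ is again a simple weight module and induction on $n$ finishes; the support statement then follows from part~1. (Note the paper itself does not prove Lemma~\ref{lem2.1} but quotes \cite{FGM}, whose argument is of this simple-submodule type; compare also the paper's Lemma~\ref{Kmod}, which is exactly this cyclic-quotient computation.)
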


We denote by $E_{i,j}$ the $n\times n$ matrix with $1$ as its $(i,j)$-entry and 0 as  other entries. We have the general linear  Lie
algebra
$$\gl_n=\sum_{1\leq i, j\leq
n}\bC E_{i,j}.$$
 Let $\mathfrak{H}={{\rm span}}\{E_{ii}\,|\,1\le i\le n\}$ and
$\mh={{\rm span}}\{h_{i}\,|\,1\le i\le n-1\}$ where
$h_i=E_{ii}-E_{i+1,i+1}$.

%Let $D={\rm span}\{t_1\cdot\partial_1,\dots,t_n\cdot\partial_n\}$ be the abelian Lie subalgebra of $K_n^+$ and $K_n$.

A $\gl_n$ module $V$ is called a weight module if the action of $\mathfrak{H}$ on $V$ is diagonalizable, i.e., $V=\oplus_{\alpha\in \bC^n}V_\alpha$, where $V_\alpha=\{v\in V\ |\ E_{ii} v=\alpha_iv,i=1,2,\ldots,n\}$ is called the weight space of $V$ with the weight $\alpha$.  Denote by ${\rm supp}(V)=\{\alpha\in \bC^n \ |\ V_\alpha\neq 0\}$ the support set of $V$. 
A $\sl_n$ module $V$ is called a weight module if the action of $\mh$ on $V$ is diagonalizable, i.e., $V=\oplus_{\lambda \in \mh^*}V_\lambda$, where $V_\lambda=\{v\in V\ |\ h v=\lambda (h) v,\forall h\in \mh\}$ is called the weight space of $V$ with the weight $\lambda$.  Denote by ${\rm supp}(V)=\{\lambda\in \mh^* \ |\ V_\lambda\neq 0\}$ the support of $V$.

%Now we identify
%\begin{eqnarray*}
%\lambda\in H^*\ with\ (\lambda(t_1\partial_1),\dots,\lambda(t_n\partial_n))\in\bC^n;\\
%\mu\in\mathfrak{H}^*\ with\ (\mu(E_{1,1}),\dots,\mu(E_{n,n}))\in\bC^n.
%\nu\in H^*\ with\ (\nu(d_1),\dots,\mu(d_n))\in\bC^n.
%\end{eqnarray*}

Let $\Lambda^+=\{\lambda\in\mh^*\,|\,\lambda(h_i)\in\Z_+ \text{ for } i=1,2,...,n-1\}$ be the set of dominant weights with respect to $\mh$. For any
$\psi\in \Lambda^+$,  let $V(\psi)$ be  the simple $\sl_n$ module with
highest weight $\psi$. We make $V(\psi)$ into a $\gl_n$ module by
defining the action of the identity matrix $I$ as some scalar
$b\in\mathbb{C}$. We denote the resulting $\gl_n$ module as $V(\psi,b)$.

Define the fundamental weights $\delta_i\in\mh^*$ by
$\delta_i(h_j)=\delta_{i,j}$ for all $i,j=1,2,..., n-1$. For convinience, we define $\delta_0=\delta_n=0\in \mh^*$.  It is
well-known that the $\gl_n$ module $V(\delta_k, k), k=0,1,\ldots,n$ can be realized as the
 exterior product $\bigwedge^k(\mathbb{C}^{n\times 1})$ 
with the action given by $$X(v_1\wedge\cdots\wedge
v_k)=\sum\limits_{i=1}^k v_1\wedge\cdots v_{i-1}\wedge
Xv_i\cdots\wedge v_k, \,\,\forall \,\, X\in \gl_n.$$ We set  $\bigwedge^0(\mathbb{C}^{n\times 1})=\bC$ and
$v\wedge a=av$ for any $v\in\bC^{n\times 1}, a\in\bC$.

%Recall that we have defined $\delta_0=\delta_n=0\in \mh^*$. 

Let $P$ be a module over the associative algebra $K_n^+$ and $M$ be a  $\gl_n$ module.
Then from \cite{LLZ}, we have the $AW_n^+$ module 
$F(P, M)=P\otimes_{\bC} M$ with the actions
\begin{equation}\begin{split}\label{Action1}
&t^{\al}\partial_{j}\cdot (g\otimes v)=(t^{\al}\partial_{j} g)\otimes v+ \sum_{i=1}^n(\ptl_{i}(t^{\al})g)\otimes (E_{ij}v)\\
&t^\alpha\cdot(g\otimes v)=(t^\alpha g)\otimes v
\end{split}\end{equation}
for all  $\alpha\in \Z_+^n, g\in P$ and $v\in M.$

%From \begin{equation} \label{Action2}(t_j\partial_{j})\cdot (g\otimes v)=(t_j \partial_{j} g)\otimes v+ g\otimes (E_{jj}v)\end{equation}
%we can see  that $V=F(P, M)$ is a weight  module 
%\begin{lemma} Let $P$ be a weight $K_n^+$ module and $M$ be a $\gl_n(\bC)$ module. Then

%(1).  $F(P,M)$ is a weight $W^+$ module if and only if $M$ is a weight $\gl_n$ module.

%(2) \end{lemma}

%\begin{proof} Let $V=F(P,M)$.  From the definition of $F(P,M)$, we have \begin{equation} \label{Action2}(t_j\partial_{j})\cdot (g\otimes v)=(t_j \partial_{j} g)\otimes v+ g\otimes (E_{jj}v).\end{equation} Hence the sufficiency in (1) is clear. Now suppose that $V$ is a weight module. Then for any $\lambda\in {\rm supp}(P)$, $0\ne v_{\lambda}\in P_{\lambda}$, we have 
% $\bC v_{\lambda}\otimes M$ is a $D$ module. Therefore $d_i-\lambda_i$ is diagonalizable on $\bC v_{\lambda}\otimes M$. From $(d_i-\lambda_i)\cdot  (v_\lambda\otimes w)=v_{\lambda}\otimes E_{ii}w$ for all $w\in M$, we know that $E_{ii}$ is diagonalizable on $M$. Then $M$ is a weight $\gl_n$ module. The lemma follows.  \end{proof}

 If $M$ is a weight $\gl_n$ module and $P$ is a weight $K_n^+$ module, we have 
\begin{eqnarray}\label{weispa}
&F(P,M)_\nu=\bigoplus_{\alpha\in\bC^n}P_{\nu-\alpha}\otimes M_\alpha,\forall\nu\in\bC^n,\\
&{\rm supp}(F(P,M))={\rm supp}(P)+{\rm supp(M)}.
\end{eqnarray}

Let $P$ be a $K_n^+$ module. For any $l\in\{1,\dots,n\}$, define a $W_n^+$-submodule of $F(P,V(\delta_l,l))$ by
\begin{equation}\label{eq-quotient}L_n(P,l)={\rm span}\{\sum_{i=1}^n(\partial_i\cdot v)\otimes(e_i\wedge u)|v\in P,u\in V(\delta_{l-1},l-1)\}.\end{equation}

\begin{lemma}\cite{LLZ}\label{quotient}Let $P$ be a simple weight $K_n^+$ module and  $M$ be a  simple weight $\gl_n$ module. Then
\begin{itemize}
\item[(a)] $F(P,M)$ is simple as $W_n^+$ module if $M\ncong V(\delta_l,l)$ for any $l\in\{0,1,\dots,n\}$;
\item[(b)]  Any $W_n^+$ simple sub-quotient of  $F(P,V(\delta_0,0))$(resp.  $F(P,V(\delta_n,n))$)  is either trivial or isormophic to $L(P,1)$ (resp. $L(P,n)$).
\item[(c)] if $l=1,\dots,n-1$, then any simple  $W_n^+$ sub-quotient of $F(P,V(\delta_l,l))$ is isomorphic to $L_n(P,l)$, $L_n(P,l+1)$ or the one-dimensional trivial module.
\end{itemize}
\end{lemma}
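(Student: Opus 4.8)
The organizing principle is a de Rham--type complex of $W_n^+$--modules. For $0\le l\le n-1$ define a linear map
\[
d_l\colon F(P,V(\delta_l,l))\to F(P,V(\delta_{l+1},l+1)),\qquad
d_l(g\otimes\omega)=\sum_{i=1}^n(\partial_i g)\otimes(e_i\wedge\omega).
\]
First I would check, by a direct computation from \eqref{Action1}, that each $d_l$ is a homomorphism of $W_n^+$--modules: for $X=t^\alpha\partial_j$ the extra $\gl_n$--terms produced when $X$ is moved past the wedge cancel against the terms arising from $\partial_i(t^\alpha)$, which is the algebraic form of the Cartan identity $[\mathcal L_X,d]=0$. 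Since the $\partial_i$ commute while the $e_i\wedge(-)$ anticommute, $d_{l+1}\circ d_l=0$, and comparing with \eqref{eq-quotient} gives $\im d_{l-1}=L_n(P,l)$. Hence each $L_n(P,l)$ is a $W_n^+$--submodule and we obtain a complex
\[
F(P,V(\delta_0,0))\xrightarrow{\,d_0\,}F(P,V(\delta_1,1))\xrightarrow{\,d_1\,}\cdots\xrightarrow{\,d_{n-1}\,}F(P,V(\delta_n,n)).
\]

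For part (a), let $N$ be a nonzero $W_n^+$--submodule of $F(P,M)$ with $M\ncong V(\delta_l,l)$ for all $l$. As $F(P,M)$ is a weight module I would start from a nonzero weight vector $w\in N$. Here $\partial_i=t^0\partial_i$ acts only on the $P$--factor, the degree--zero part $\{t_i\partial_j\}\cong\gl_n$ acts as $t_i\partial_j\otimes\id+\id\otimes E_{ij}$, and the higher $t^\alpha\partial_j$ mix the two factors. The plan is to use these operators to first extract from $N$ a pure tensor $g\otimes v$ with $v$ an extremal weight vector of $M$, then to sweep out all of $M$ via $\gl_n$ and all of $P$ via the operators $\partial_i$ and $t^\alpha\partial_j$, with the simplicity of $P$ over $K_n^+$ as the essential input. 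The representation--theoretic crux is that, away from the exterior powers, $M$ shares no relevant summand with $\bC^n\otimes M$ that could be the symbol of a degree--shifting operator producing a proper submodule; this is exactly the Pieri--rule computation on $V(\delta_1,1)\otimes M$ that singles out $M\cong V(\delta_l,l)$, and it is where the hypothesis of (a) enters.

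For parts (b) and (c) the point is to read the subquotients off the complex. The chain $0\subseteq L_n(P,l)=\im d_{l-1}\subseteq\ker d_l\subseteq F(P,V(\delta_l,l))$ has successive quotients $L_n(P,l)$, the cohomology $H^l=\ker d_l/\im d_{l-1}$, and $F(P,V(\delta_l,l))/\ker d_l\cong\im d_l=L_n(P,l+1)$, so it suffices to show each $H^l$ is zero or one--dimensional. This is a twisted Poincar\'e lemma: by Lemma \ref{lem2.1}, $P=V_1\otimes\cdots\otimes V_n$ and $\bigwedge^\bullet(\bC^n)=\bigotimes_i(\bC\oplus\bC e_i)$, so the whole complex factorizes as the tensor product of the two--term complexes $V_i\xrightarrow{\partial_i}V_i$; each of these has $\ker\partial_i$ and $\operatorname{coker}\partial_i$ of dimension at most one on each of the three simple types of Lemma \ref{lem2.1}, and K\"unneth assembles them into an $H^l$ of dimension at most one, necessarily a trivial module since $W_n^+$ is perfect. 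At the two ends one differential is absent, which yields precisely the dichotomies ``trivial or $L_n(P,1)$'' and ``trivial or $L_n(P,n)$'' of (b), while the interior filtrations give (c). The remaining simplicity of each $L_n(P,l)$ is obtained by running the generation argument of (a) on the submodule $\im d_{l-1}$.

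I expect part (a) to be the main obstacle: one must exclude \emph{all} unexpected submodules, not merely unexpected homomorphisms, so the generation argument has to show that a single weight vector generates everything while controlling how the mixing operators $t^\alpha\partial_j$ interact with the internal $\gl_n$--structure of $M$. Pinning down the precise representation--theoretic obstruction and checking that it vanishes exactly off the exterior powers is the delicate step; by contrast the cohomology computation behind (b) and (c) is routine once the complex and Lemma \ref{lem2.1} are in place.
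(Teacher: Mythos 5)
The structural half of your proposal is sound, and it is worth saying that the paper itself contains no proof of this lemma --- it is quoted from \cite{LLZ} --- so the only possible comparison is with the published proof there, which organizes the exceptional cases by essentially the same de Rham/Koszul complex you set up. Your individual claims on that side check out: $d_l$ is a $W_n^+$-homomorphism because the discrepancy $\sum_{i,k}\bigl(\partial_k\partial_i(t^\alpha)g\bigr)\otimes\bigl(e_k\wedge E_{ij}\omega\bigr)$ vanishes (the coefficient is symmetric in $(i,k)$ while $E_{ij}$ acts on $\bigwedge^\bullet(\bC^{n\times 1})$ as $e_i\wedge\iota_j$, so the operator is antisymmetric in $(i,k)$); $d^2=0$; $\im d_{l-1}=L_n(P,l)$ by (\ref{eq-quotient}); and the K\"unneth factorization through Lemma \ref{lem2.1} does give $\dim H^l\leq 1$ in each degree, hence trivial as $W_n^+$ acts, since $W_n^+$ is perfect. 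The filtration $0\subseteq \im d_{l-1}\subseteq\ker d_l\subseteq F(P,V(\delta_l,l))$ then correctly confines every simple subquotient to the three layers.

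The genuine gap is that the representation-theoretic core --- part (a) and the simplicity of each $L_n(P,l)$ --- is never proved; you name a plan (extract a pure tensor, sweep out via $\gl_n$ and $K_n^+$, a ``Pieri-rule computation'') but execute none of it, and you acknowledge this yourself. Two concrete problems follow. First, in (a) the module $M$ is an arbitrary simple \emph{weight} $\gl_n$ module, possibly infinite-dimensional (e.g.\ cuspidal), so the classical Pieri rule for finite-dimensional modules cannot be invoked; one needs an argument valid for all simple weight modules, which in \cite{LLZ} amounts to showing that a nonzero $W_n^+$-submodule of $F(P,M)$ is automatically stable under $A_n^+$ (hence an $AW_n^+$-submodule, and then simplicity follows from simplicity of $P\otimes M$ over $K_n^+\otimes U(\gl_n)$, as in Lemma \ref{WotimesV}) unless the relevant symbol map degenerates, which happens exactly when $M\cong V(\delta_l,l)$. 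Pinning down and verifying that degeneracy condition is the actual content of (a), not a routine check. Second, without the simplicity of $L_n(P,l)$ --- which you defer to the same missing argument --- your filtration only yields that every simple subquotient of $F(P,V(\delta_l,l))$ is trivial or a simple subquotient \emph{of} $L_n(P,l)$ or $L_n(P,l+1)$, which is strictly weaker than the classification asserted in (b) and (c). So as written the proposal establishes the scaffolding of the lemma but not the lemma.
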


\section{$AW_n^+$ modules}

In this section, we will classify the simple bounded weight $AW_n^+$ modules.

For any Lie algebra $\g$, let $U(\g)$ be the universal enveloping algebra of $\g$. By the PBW Theorem, $U(\tilde{W}_n^+)=U(A_n^+)\cdot U(W_n^+)$. Let $\mathcal{I}$ be the left ideal of $U(\tilde{W}_n^+)$ generated by $t^0-1$ and $t^\alpha\cdot t^\beta-t^{\alpha+\beta},\alpha,\beta\in\Z_+^n$. It is easy to see that $\mathcal{I}$ is in fact an ideal of $U(\tilde{W}_n^+)$. Hence we have the quotient associative algebra $\bar{U}=U(\tilde{W}_n^+)/{\mathcal{I}}$. From PBW Theorem, we may identify $A_n$ and $W_n^+$ with their images in $\bar{U}$ respectively. Then  $\bar{U}=A_n^+\cdot U(W_n^+)$. 
Clearly, $A_n^+\cdot W_n^+$ is a Lie subalgebra of $\bar{U}$ with a standard basis $\{t^{\alpha}\cdot t^{\beta}\partial_j|\alpha,\beta\in \Z_+^n, j=1,2,\ldots n\}$. In fact, we have
$$[a\cdot x,b\cdot y]=(a\cdot x(b))\cdot y-(b\cdot y(a))\cdot x+ab\cdot[x,y],\forall a,b\in A_n^+,x,y\in W_n^+.$$
For any $\alpha,\beta\in\Z^n$, write $\alpha\leqslant\beta$ if $\alpha_i\leqslant\beta_i$ for all $i\in\{1,\dots,n\}$.
For any $\alpha\in\Z_+^n,i\in\{1,\dots,n\}$, let
\begin{equation}\label{X} X_{\alpha,i}=\sum_{0\leqslant\beta\leqslant\alpha}(-1)^{|\beta|}{\alpha\choose\beta}t^\beta\cdot t^{\alpha-\beta}\partial_i\in A_n^+\cdot W_n^+\subset \bar{U},\end{equation}
where $|\beta|=\beta_1+\dots+\beta_n,{\alpha\choose \beta}={\alpha_1\choose\beta_1}\dots{\alpha_n\choose\beta_n}$ and ${0\choose 0}:=1$.

For any $\alpha\in\Z_+^n,i\in\{1,\dots,n\}$, we have
\begin{equation}\label{basis-change}\begin{split}
 &\sum_{0\leqslant\beta\leqslant\alpha}{\alpha\choose \beta} t^\beta\cdot X_{\alpha-\beta,i}\\
&=\sum_{0\leqslant\beta\leqslant\alpha}{\alpha\choose \beta} t^\beta\cdot\sum_{0\leqslant\beta'\leqslant\alpha-\beta}{\alpha-\beta\choose{\beta'}}(-1)^{|\beta'|}t^{\beta'}\cdot t^{\alpha-\beta-\beta'}\partial_i\\
&=\sum_{0\leqslant\beta\leqslant\alpha}\sum_{0\leqslant\beta'\leqslant\alpha-\beta} {\alpha\choose {\beta+\beta'}} {{\beta+\beta'}\choose \beta }(-1)^{|\beta'|}t^{\beta+\beta'}\cdot t^{\alpha-\beta-\beta'}\partial_i\\
&=\sum_{0\leqslant\gamma\leqslant\alpha}\sum_{0\ls\beta\ls\gamma}{\gamma\choose \beta}{\alpha\choose \gamma}(-1)^{|\gamma-\beta|}t^\gamma\cdot t^{\alpha-\gamma}\partial_i\\
&=t^\alpha\partial_i+\sum_{0<\gamma\leqslant\alpha}(1-1)^{|\gamma|}{\alpha\choose\gamma} t^\gamma\cdot t^{\alpha-\gamma}\partial_i\\
&=t^\alpha\partial_i.\end{split}
\end{equation}

Let
$$T:={\rm span}\{X_{\alpha,i}|\alpha\in\Z_+^n\setminus\{0\},i=1,\dots,n\}$$
and
$$\Delta:={\rm span}\{\partial_1,\dots,\partial_n\}={\rm span}\{X_{0,i}|i=1,\dots,n\}.$$

\begin{lemma}\label{lem3.1}{\rm (1).} $\{X_{\alpha,i}|\alpha\in\Z_+^n,i\in\{1,\dots,n\}$ forms an $A_n^+$ basis of $A_n^+\cdot W_n^+$.

{\rm (2).} $T=\{x\in A_n^+\cdot W_n^+|[x,\Delta]=[x,A_n^+]=0\}$. Hence $T$ is a Lie subalgebra of $\bar{U}$.
\end{lemma}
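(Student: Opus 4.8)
The plan is to prove part (1) using the already-established identity \eqref{basis-change} as the inverse change of basis, and to prove part (2) by packaging the elements $X_{\alpha,i}$ into a generating series that makes the required bracket computations transparent. For part (1), I would first record that, as a \emph{left} $A_n^+$-module (with $A_n^+$ acting by $t^\gamma\cdot(-)$, so that $t^\gamma\cdot(t^\alpha\cdot t^\beta\partial_j)=t^{\gamma+\alpha}\cdot t^\beta\partial_j$ in $\bar U$), the space $A_n^+\cdot W_n^+$ is free with basis $\{t^\beta\partial_j\mid\beta\in\Z_+^n,\,j=1,\dots,n\}$; this is just a regrouping of the standard basis $\{t^\alpha\cdot t^\beta\partial_j\}$. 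For each fixed $i$ the elements $X_{\alpha,i}$ lie in the free submodule $\bigoplus_{\gamma}A_n^+\cdot t^\gamma\partial_i$, and by definition the coefficient of $t^\gamma\partial_i$ in $X_{\alpha,i}$ is $(-1)^{|\alpha-\gamma|}\binom{\alpha}{\gamma}t^{\alpha-\gamma}$ for $\gamma\leq\alpha$ and $0$ otherwise. The resulting change-of-basis matrix is therefore triangular for the partial order $\leq$ on $\Z_+^n$ with all diagonal entries equal to $1$, and since each index dominates only finitely many indices it is invertible over $A_n^+$. In fact \eqref{basis-change} writes the inverse down explicitly as $t^\alpha\partial_i=\sum_{0\leq\beta\leq\alpha}\binom{\alpha}{\beta}t^\beta\cdot X_{\alpha-\beta,i}$, so the two families are mutually $A_n^+$-expressible and $\{X_{\alpha,i}\}$ is an $A_n^+$-basis, proving (1).

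For part (2), I would first extract from the bracket formula $[a\cdot x,b\cdot y]=(a\cdot x(b))\cdot y-(b\cdot y(a))\cdot x+ab\cdot[x,y]$ the two special cases needed: $[a\cdot x,t^\gamma]=a\cdot x(t^\gamma)$ (computed directly in $\bar U$ from $[x,t^\gamma]=x(t^\gamma)$ for $x\in W_n^+$) and $[a\cdot x,\partial_k]=-\partial_k(a)\cdot x+a\cdot[x,\partial_k]$ (the formula with $b=1,\,y=\partial_k$). To prove $[X_{\alpha,i},\partial_k]=[X_{\alpha,i},t^\gamma]=0$ for all $\alpha\neq0$, I would package the $X_{\alpha,i}$ into the generating series, with $u=(u_1,\dots,u_n)$ formal and $u\cdot t=\sum_k u_kt_k$,
\[
Z_i(u):=\sum_{\alpha\in\Z_+^n}\frac{u^\alpha}{\alpha!}X_{\alpha,i}=e^{-u\cdot t}\cdot\bigl(e^{u\cdot t}\partial_i\bigr),
\]
where the first factor lies in the completion $A_n^+[[u]]$ and the second in $W_n^+[[u]]$; the factorization follows by setting $\gamma=\alpha-\beta$ in the definition of $X_{\alpha,i}$. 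The two bracket identities then give $[Z_i(u),t^\gamma]=e^{-u\cdot t}\cdot(\gamma_i e^{u\cdot t}t^{\gamma-e_i})=\gamma_i t^{\gamma-e_i}$, which is independent of $u$, and $[Z_i(u),\partial_k]=u_k\,e^{-u\cdot t}\cdot(e^{u\cdot t}\partial_i)-u_k\,e^{-u\cdot t}\cdot(e^{u\cdot t}\partial_i)=0$; comparing coefficients of $u^\alpha$ yields the claim. (One may instead expand the binomial sums directly, using $\binom{\alpha}{\beta}\beta_k=\alpha_k\binom{\alpha-e_k}{\beta-e_k}$ together with $\sum_{0\leq\beta\leq\alpha}(-1)^{|\beta|}\binom{\alpha}{\beta}=0$ for $\alpha\neq0$.) This establishes $T\subseteq\{x\mid[x,\Delta]=[x,A_n^+]=0\}$.

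For the reverse inclusion I would take such an $x$, write $x=\sum_{\alpha,i}f_{\alpha,i}X_{\alpha,i}$ uniquely with $f_{\alpha,i}\in A_n^+$ by part (1), and use that $A_n^+$ is commutative and that the commutator is a derivation. From $[x,t^\gamma]=\sum_{\alpha,i}f_{\alpha,i}[X_{\alpha,i},t^\gamma]=\sum_i f_{0,i}\gamma_i t^{\gamma-e_i}$ and the choice $\gamma=e_j$ one gets $f_{0,j}=0$ for all $j$; from $[x,\partial_k]=-\sum_{\alpha\neq0,i}\partial_k(f_{\alpha,i})X_{\alpha,i}$ (using $[f,\partial_k]=-\partial_k(f)$ and $[X_{\alpha,i},\partial_k]=0$) together with the uniqueness in part (1) one gets $\partial_k(f_{\alpha,i})=0$ for every $k$, so each $f_{\alpha,i}$ is a scalar and $x\in T$. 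Finally, $T$ is thereby identified as the centralizer in the Lie algebra $A_n^+\cdot W_n^+$ of the set $\Delta\cup A_n^+$, and such a centralizer is automatically closed under the bracket by the Jacobi identity, giving the last assertion that $T$ is a Lie subalgebra of $\bar U$.

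The step I expect to be the main obstacle is the vanishing $[X_{\alpha,i},\partial_k]=0$: here two genuinely different families of terms, one arising from differentiating the $A_n^+$-factor and one from the $W_n^+$-factor, must cancel after a binomial reindexing with a shift by $e_k$, and it is easy to mishandle the signs. The generating-function identity $Z_i(u)=e^{-u\cdot t}\cdot(e^{u\cdot t}\partial_i)$ is what makes this cancellation transparent, reducing it to $u_k-u_k=0$; the one point to be careful about is not to \emph{simplify} the product $e^{-u\cdot t}\cdot(e^{u\cdot t}\partial_i)$, since in $\bar U$ its two factors are kept separate as an $A_n^+$-element times a $W_n^+$-element rather than multiplied inside a Weyl algebra.
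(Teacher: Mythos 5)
Your proof is correct, and its overall skeleton matches the paper's: part (1) rests on $A_n^+$-linear independence of the $X_{\alpha,i}$ plus the inversion formula (\ref{basis-change}), and your reverse inclusion in part (2) --- expand $x$ in the basis from (1), kill the $\alpha=0$ coefficients by bracketing with $t^{e_j}$, then force the remaining coefficients to be constants by bracketing with $\partial_k$ --- is exactly the paper's argument, which it phrases as $x=x_1+x_2$ with $x_1\in A_n^+\cdot T$, $x_2\in A_n^+\cdot\Delta$, concluding $x_2=0$ and then $x_1\in T$. Where you genuinely depart from the paper is the forward inclusion: the paper proves $[X_{\alpha,i},t^\gamma]=0$ via the identity $\sum_{0\leqslant\beta\leqslant\alpha}(-1)^{|\beta|}{\alpha\choose\beta}=0$ and proves $[\partial_j,X_{\alpha,i}]=0$ by a direct binomial reindexing with a shift by $e_j$ (precisely the sign-sensitive cancellation you predicted would be the main hazard), whereas you package the $X_{\alpha,i}$ into the generating series $Z_i(u)=e^{-u\cdot t}\cdot\bigl(e^{u\cdot t}\partial_i\bigr)$ and reduce both identities to the trivial cancellations $e^{-u\cdot t}e^{u\cdot t}=1$ and $u_k-u_k=0$, recovering the statements by comparing coefficients of $u^\alpha$. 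Both routes are complete: the paper's stays inside finite sums and needs no completion, while yours works coefficient-wise in $\bar{U}[[u_1,\dots,u_n]]$ but makes the cancellation mechanical and explains conceptually why these particular combinations centralize $\Delta$ and $A_n^+$. Your closing caveat is also exactly the right point to flag --- in $\bar{U}$ the factor $e^{u\cdot t}\partial_i$ must be treated as a $W_n^+$-valued series paired with an $A_n^+$-valued series via the dot product of the construction, not multiplied out inside a Weyl algebra, and with that understanding every step of your computation is valid.
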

\begin{proof} Since $\{t^\alpha\cdot t^\beta\partial_i|\alpha,\beta\in\Z_+^n,i=1,\dots,n\}$ is a basis of $A_n^+\cdot W_n^+$,  it is easy to see that $\{X_{\alpha,i}|\alpha\in\Z_+^n,i=1,\dots,n\}$ is $A_n^+$ linearly independent.  And from (\ref{basis-change}), we have (1).
Let $T_1=\{x\in A_n^+\cdot W_n^+|[x,\Delta]=[x,A_n^+]=0\}$. We compute 
$$[X_{\alpha,i},t^\gamma]=\gamma_i \sum_{0\leqslant\beta\leqslant\alpha}(-1)^{|\beta|}{\alpha\choose \beta }t^{\alpha+\gamma-e_i}=0,\forall\alpha\in\Z_+^n\setminus\{0\},i\in\{1,\dots,n\},\gamma\in\Z_+^n.$$
For any $i,j\in\{1,\dots,n\},\alpha\in\Z_+^n\setminus\{0\}$, from \begin{eqnarray*}
%& &\sum_{0\leqslant\beta\leqslant\alpha}(-1)^{|\beta|}{\alpha\choose \beta}\beta_jt^{\beta-e_j}\cdot t^{\alpha-\beta}\partial_i=\sum_{e_j\leqslant\beta\leqslant\alpha}(-1)^{|\beta|}{\alpha\choose \beta}  \beta_j t^{\beta-e_j}\cdot t^{\alpha-\beta}\partial_i,\\
& &\sum_{0\leqslant\beta\leqslant\alpha}(-1)^{|\beta|}{\alpha\choose \beta}(\alpha-\beta)_jt^\beta\cdot t^{\alpha-\beta-e_j}\partial_i\\
&=&\sum_{e_j\leqslant\beta\leqslant\alpha+e_j}(-1)^{|\beta|-1}{\alpha\choose{\beta-e_j}}((\alpha-\beta)_j+1)t^{\beta-e_j}\cdot t^{\alpha-\beta}\partial_i\\
&=&\sum_{e_j\leqslant\beta\leqslant\alpha}(-1)^{|\beta|-1}{\alpha\choose \beta}\beta_jt^{\beta-e_j}\cdot t^{\alpha-\beta}\partial_i\\
&=&-\sum_{0\leqslant\beta\leqslant\alpha}(-1)^{|\beta|}{\alpha\choose \beta}\beta_jt^{\beta-e_j}\cdot t^{\alpha-\beta}\partial_i, 
\end{eqnarray*}
we have $[\partial_j,X_{\alpha,i}]=[\partial_j,\sum_{0\leqslant\beta\leqslant\alpha}(-1)^{|\beta|}{\alpha\choose \beta}t^\beta\cdot t^{\alpha-\beta}\partial_i]=\sum_{0\leqslant\beta\leqslant\alpha}(-1)^{|\beta|}{\alpha\choose \beta}\beta_jt^{\beta-e_j}\cdot t^{\alpha-\beta}\partial_i+\sum_{0\leqslant\beta\leqslant\alpha}(-1)^{|\beta|}{\alpha\choose \beta}(\alpha_j-\beta_j)t^\beta\cdot t^{\alpha-\beta-e_j}\partial_i=0
$. Hence $T\subset T_1$.

On the other hand, for any $x\in T_1$, from (1) we may write $x=x_1+x_2\in T_1$, where $x_1\in A_n^+\cdot T$ and $x_2\in A_n^+\cdot\Delta$. The fact that $[x,a]=[x_2,a]=0$ for all $a\in A_n^+$ implies that $x_2=0$. Then $[\partial_i,x]=[\partial_i,x_1]=0$ implies that $x_1\in T$. So we have $T_1\subset T$. Therefore, we have (2).
\end{proof}

\begin{lemma}\label{lem3.2}
There is an  associative algebra isomorphism $\pi_1:  K_n^+\otimes U(T)\rightarrow \bar{U}$ with 
\begin{equation}\pi_1(t^{\alpha} \partial^{\beta}\otimes 1)=t^{\alpha} \cdot \partial^{\beta}, \pi_1(1\otimes y)=y,\forall \alpha,\beta\in \Z_+^n, y\in T. \end{equation}

\end{lemma}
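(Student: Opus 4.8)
The plan is to realize $\pi_1$ as the product of two commuting algebra homomorphisms and then to prove bijectivity through a PBW/associated-graded comparison. First I would construct $\pi_1$. The assignments $t_i\mapsto t_i\in A_n^+$ and $\partial_i\mapsto\partial_i=t^0\partial_i\in W_n^+$ satisfy the Weyl-algebra relations inside $\bar U$: the $t_i$ commute in $A_n^+$, the $\partial_i$ commute in $W_n^+$, and $[\partial_i,t^{e_j}]=t^0\partial_i(t^{e_j})=\delta_{ij}$ since $t^0=1$ in $\bar U$. Hence there is an algebra homomorphism $\phi\colon K_n^+\to\bar U$ with $\phi(t^\alpha\partial^\beta)=t^\alpha\cdot\partial^\beta$. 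Since $T$ is a Lie subalgebra of $\bar U$ by Lemma \ref{lem3.1}(2), the universal property of $U(T)$ yields an algebra homomorphism $\psi\colon U(T)\to\bar U$ restricting to the identity on $T$. By Lemma \ref{lem3.1}(2) every element of $T$ commutes with $\Delta$ and with $A_n^+$, so $\phi(K_n^+)$ and $\psi(U(T))$ commute in $\bar U$; therefore $x\otimes y\mapsto\phi(x)\psi(y)$ defines an algebra homomorphism $\pi_1$ taking the stated values.

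For surjectivity, note that $\im\pi_1$ contains $A_n^+$, all $\partial_i$, and all $X_{\gamma,i}$, hence contains both $\Delta$ and $T$. Formula (\ref{basis-change}) writes each $t^\alpha\partial_i$ as an $A_n^+$-linear combination of the $X_{\gamma,i}$, so $W_n^+\subset\im\pi_1$; as $\im\pi_1$ is a subalgebra containing both $A_n^+$ and $W_n^+$ and $\bar U=A_n^+\cdot U(W_n^+)$, we conclude $\im\pi_1=\bar U$.

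The substantive part is injectivity, which I would deduce from the order filtration. Filter $\bar U$ by $\bar U_{\le m}=A_n^+\cdot U_{\le m}(W_n^+)$; because $[A_n^+,W_n^+]\subset A_n^+$ has order $0$ this is an algebra filtration, and the multiplication isomorphism $A_n^+\otimes U(W_n^+)\to\bar U$ (a consequence of the PBW theorem) identifies $\operatorname{gr}\bar U$ with the commutative algebra $A_n^+\otimes S(W_n^+)$. On the source I would filter $K_n^+$ by order in the $\partial_i$ and $U(T)$ by PBW degree, giving $\operatorname{gr}(K_n^+\otimes U(T))\cong A_n^+\otimes S(\Delta\oplus T)=S_{A_n^+}(A_n^+\otimes(\Delta\oplus T))$. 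Each $\partial_i$ and each $X_{\gamma,i}$ has order $1$ in $\bar U$, so $\pi_1$ is a filtered homomorphism and $\operatorname{gr}\pi_1$ is the $A_n^+$-algebra map $S_{A_n^+}(\Phi)$ induced by the $A_n^+$-linear map $\Phi\colon A_n^+\otimes(\Delta\oplus T)\to A_n^+\otimes W_n^+$ sending each $X_{\gamma,i}$ ($\gamma\in\Z_+^n$) to its order-$1$ symbol.

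It then remains to see that $\Phi$ is an isomorphism. Under the identification $A_n^+\otimes W_n^+\cong A_n^+\cdot W_n^+$ afforded by multiplication, the order-$1$ symbol of $X_{\gamma,i}$ is precisely $X_{\gamma,i}\in A_n^+\cdot W_n^+$, so $\Phi$ carries the $A_n^+$-basis $\{X_{\gamma,i}\}_{\gamma\in\Z_+^n,\,i}$ of its domain to the same family, which is an $A_n^+$-basis of $A_n^+\cdot W_n^+$ by Lemma \ref{lem3.1}(1). Hence $\Phi$ is an isomorphism of free $A_n^+$-modules, $\operatorname{gr}\pi_1=S_{A_n^+}(\Phi)$ is an algebra isomorphism, and since both filtrations are exhaustive and concentrated in nonnegative degrees, $\pi_1$ is injective. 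I expect this identification of the leading symbols of the $X_{\gamma,i}$ with the change of basis in Lemma \ref{lem3.1}(1) to be the main point; the bookkeeping with the two filtrations and the verification that $\operatorname{gr}\bar U\cong A_n^+\otimes S(W_n^+)$ are the only places where care is needed, and the remaining steps are routine.
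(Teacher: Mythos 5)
Your proof is correct, but your bijectivity argument takes a genuinely different route from the paper's. The construction of $\pi_1$ itself (two commuting homomorphisms, from $K_n^+$ and from $U(T)$, glued using Lemma \ref{lem3.1}(2)) is the same in both. For bijectivity, however, the paper never introduces filtrations: it constructs an explicit inverse. It restricts $\pi_1$ to the Lie subalgebra $\mathfrak{g}=A_n^+\otimes T+(A_n^+\cdot\Delta+A_n^+)\otimes\bC$ of $K_n^+\otimes U(T)$, observes via Lemma \ref{lem3.1}(1) that $\iota:=\pi_1|_{\mathfrak{g}}$ is a Lie algebra isomorphism onto $A_n^+\cdot W_n^++A_n^+$, and takes $\eta:=\iota^{-1}|_{\tilde{W}_n^+}$, a Lie algebra homomorphism $\tilde{W}_n^+\to K_n^+\otimes U(T)$ with $\eta(t^\alpha)=t^\alpha\otimes 1$ and $\eta(t^\alpha\partial_i)=\sum_{0\leqslant\beta\leqslant\alpha}{\alpha\choose\beta}t^\beta\otimes X_{\alpha-\beta,i}$ (this formula is exactly (\ref{basis-change})); by the universal property of $U(\tilde{W}_n^+)$, and since the generators $t^0-1$, $t^\alpha\cdot t^\beta-t^{\alpha+\beta}$ of $\mathcal{I}$ lie in the kernel, $\eta$ descends to an algebra homomorphism $\bar{\eta}:\bar{U}\to K_n^+\otimes U(T)$ that is a two-sided inverse of $\pi_1$. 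Your route — surjectivity from (\ref{basis-change}), injectivity by showing $\operatorname{gr}\pi_1=S_{A_n^+}(\Phi)$ is an isomorphism — rests on the same key input, Lemma \ref{lem3.1}(1), but packages it through an associated-graded comparison instead of through inverting $\iota$. The paper's approach buys economy and an explicit formula for $\pi_1^{-1}$, at the cost of verifying that $\iota$ is a Lie isomorphism and that $\tilde{\eta}$ kills $\mathcal{I}$; your approach requires setting up three filtrations and identifying their graded algebras, but invokes only standard filtered/graded machinery, and the auxiliary facts you need (that multiplication $A_n^+\otimes U(W_n^+)\to\bar{U}$ is a linear isomorphism, and that $\operatorname{gr}K_n^+\cong A_n^+\otimes S(\Delta)$) are precisely the PBW-type statements the paper already relies on, so nothing extra is assumed. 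Both arguments are complete.
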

\begin{proof} Note that $T$ is a Lie subalgebra of $\bar{U}$ and $K_{n}^+$ is isomorphic to the associative subalgebra of $\bar{U}$ generated by $t_1,\ldots,t_n, \partial_1,\ldots,\partial_n$. So the restrictions of $\pi_1$ on $K_n^+$ and $U(T)$ are well-defined. From Lemma \ref{lem3.1}, $\pi_1(K_n^+)$ and $\pi_1(U({T}))$  are  commutative in $\bar{U}$. Hence $\pi_1$ is a well-defined  homomorphism of associative algebras.  
Let $\mathfrak{g}=A_n^+\otimes T+(A_n^+\cdot\Delta+A_n^+)\otimes\bC$. 

Then it's easy to see that $\iota:=\pi_1|_{\mathfrak{g}}:\mathfrak{g}\rightarrow A_n^+\cdot W_n^++A_n^+$ is a Lie algebra isomorphism.  Therefore, the restriction of $\iota^{-1}$ to $\tW_n^+=W_n^++A_n^+$ gives a Lie algebra homomorphism  $\eta:\tilde{W}_n^+\rightarrow K_n^+\otimes U(T)$ with
$$\eta(t^\alpha)=t^\alpha\otimes 1,\eta(t^\alpha\partial_i)=\sum_{0\ls\beta\ls\alpha}(_\beta^\alpha)t^\beta\otimes X_{\alpha-\beta,i}.$$

Then we have the  associative algebra homomorphism $\tilde{\eta}:U(\tilde{W}_n^+)\rightarrow K_n^+\otimes U(T)$. Clearly,
$$t^0-1,t^\alpha\cdot t^\beta-t^{\alpha+\beta}\in {\rm Ker}\ \tilde{\eta},\forall\alpha,\beta\in\Z_+^n.$$
Consequently, we have the induced  associative algebra homomorphism $\bar{\eta}:\bar{U}\rightarrow K_n^+\otimes U(T)$.
Obviously, $\bar\eta=\pi_1^{-1}$, which gives the lemma.
\end{proof}

%From now on, we will identify $\bar U$ with $K_n^+\otimes U(T)$ whenever no confusion is to be caused.

Let $\m$ be the ideal of $A_n^+$ generated by $t_1,\dots,t_n$. Then $\m={\rm span}\{t^\alpha|\alpha\in\Z_+^n\setminus\{0\}\}$ and $\m\Delta$ is a subalgebra of $W_n^+$.

\begin{lemma}\label{TmD}
The linear map $\pi_2:\m\Delta\rightarrow T$ defined by
$$\pi_2(t^\alpha\partial_i)=X_{\alpha,i},\alpha\in\Z_+^n\setminus \{0\},i=1,\dots,n,$$
is an isomorphism of Lie algebras.
\end{lemma}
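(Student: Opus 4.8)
The plan is to treat bijectivity and the bracket-preservation separately, and to reduce the latter to a single combinatorial identity that I verify by a short binomial computation.

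\textbf{Bijectivity.} The elements $\{t^\alpha\partial_i \mid \alpha\in\Z_+^n\setminus\{0\},\ i=1,\dots,n\}$ form a basis of $\m\Delta$, and by Lemma~\ref{lem3.1}(1) the elements $\{X_{\alpha,i}\mid \alpha\in\Z_+^n\setminus\{0\},\ i=1,\dots,n\}$ are $\bC$-linearly independent and hence form a basis of $T$. Thus $\pi_2$, which carries the first basis onto the second, is a linear isomorphism, and it will suffice to show it preserves brackets (its inverse is then automatically a homomorphism).

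\textbf{Reduction to one identity.} Since $[t^\alpha\partial_i, t^\gamma\partial_j] = \gamma_i\, t^{\alpha+\gamma-e_i}\partial_j - \alpha_j\, t^{\alpha+\gamma-e_j}\partial_i$, and for $\alpha,\gamma\in\Z_+^n\setminus\{0\}$ every index $\alpha+\gamma-e_i$ occurring with a nonzero coefficient lies in $\Z_+^n\setminus\{0\}$ (because $|\alpha+\gamma-e_i|=|\alpha|+|\gamma|-1\geq 1$), the map $\pi_2$ is a Lie homomorphism if and only if
\begin{equation}\label{star}
[X_{\alpha,i}, X_{\gamma,j}] = \gamma_i\, X_{\alpha+\gamma-e_i,j} - \alpha_j\, X_{\alpha+\gamma-e_j,i}
\end{equation}
holds in $\bar U$ for all $\alpha,\gamma\in\Z_+^n\setminus\{0\}$ and all $i,j$. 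To prove \eqref{star} I would expand $X_{\alpha,i}$ and $X_{\gamma,j}$ by their definition \eqref{X} and apply the bracket formula $[a\cdot x, b\cdot y] = (a\cdot x(b))\cdot y - (b\cdot y(a))\cdot x + ab\cdot[x,y]$ termwise, with $a=t^\beta$, $x=t^{\alpha-\beta}\partial_i$, $b=t^\delta$, $y=t^{\gamma-\delta}\partial_j$. This yields a double sum over $0\leq\beta\leq\alpha$, $0\leq\delta\leq\gamma$ with weights $(-1)^{|\beta|+|\delta|}\binom\alpha\beta\binom\gamma\delta$, which splits into four families: the two ``outer'' families coming from $x(b)$ and $y(a)$, and the two ``inner'' families coming from $[x,y]$.

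\textbf{Cancellation and Vandermonde.} The decisive simplification is that each outer family leaves one summation variable entirely free: the $x(b)$ family carries the factor $\sum_{\beta}(-1)^{|\beta|}\binom\alpha\beta$ and the $y(a)$ family the factor $\sum_{\delta}(-1)^{|\delta|}\binom\gamma\delta$. Because $\alpha,\gamma\neq 0$, both of these alternating sums vanish, so the entire outer contribution is zero; this is exactly the place where the hypothesis $\alpha,\gamma\neq 0$ (that is, that we are inside $\m\Delta$ rather than all of $W_n^+$) enters. For the inner $\partial_j$-terms, collecting the coefficient of $t^\mu\cdot t^{\alpha+\gamma-e_i-\mu}\partial_j$ with $\mu=\beta+\delta$ reduces matters to $(-1)^{|\mu|}\sum_{\beta+\delta=\mu}\binom\alpha\beta\binom\gamma\delta(\gamma-\delta)_i$; the absorption identity $(\gamma-\delta)_i\binom\gamma\delta=\gamma_i\binom{\gamma-e_i}{\delta}$ followed by the Vandermonde convolution $\sum_{\delta}\binom{\gamma-e_i}{\delta}\binom{\alpha}{\mu-\delta}=\binom{\alpha+\gamma-e_i}{\mu}$ turns this into $\gamma_i(-1)^{|\mu|}\binom{\alpha+\gamma-e_i}{\mu}$, precisely the coefficient of that monomial in $\gamma_i X_{\alpha+\gamma-e_i,j}$. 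The inner $\partial_i$-terms are handled symmetrically via $(\alpha-\beta)_j\binom\alpha\beta=\alpha_j\binom{\alpha-e_j}{\beta}$, producing $-\alpha_j X_{\alpha+\gamma-e_j,i}$, and \eqref{star} follows.

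I expect the main obstacle to be organizational rather than conceptual: correctly isolating the four term-families and tracking the shifted multi-indices. The single insight that makes the computation short is the vanishing of the outer families, which simultaneously removes the bulk of the terms and pinpoints why the statement is restricted to $\m\Delta$.
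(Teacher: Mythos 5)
Your proposal is correct, but it takes a genuinely different route from the paper. You prove bracket-preservation by direct computation: expanding $X_{\alpha,i}$ and $X_{\gamma,j}$ via \eqref{X}, splitting the resulting double sum into outer and inner families, killing the outer families with the vanishing of $\sum_{0\leqslant\beta\leqslant\alpha}(-1)^{|\beta|}\binom{\alpha}{\beta}$ for $\alpha\neq 0$, and evaluating the inner families with the absorption identity plus Vandermonde convolution to obtain $[X_{\alpha,i},X_{\gamma,j}]=\gamma_i X_{\alpha+\gamma-e_i,j}-\alpha_j X_{\alpha+\gamma-e_j,i}$, which matches $\pi_2([t^\alpha\partial_i,t^\gamma\partial_j])$. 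The computation is sound, including the boundary cases (when $\gamma_i=0$, or when $\mu$ exceeds $\alpha+\gamma-e_i$ in the $i$-th coordinate, both sides vanish), and you correctly pinpoint that the hypothesis $\alpha,\gamma\neq 0$ enters exactly in the vanishing of the outer families. The paper avoids all computation: it realizes $\pi_2$ as the composite of natural Lie algebra maps $\m\Delta\subset\m\cdot\Delta+A_n^+\cdot T\rightarrow(\m\cdot\Delta+A_n^+\cdot T)/(\m\cdot\Delta+\m\cdot T)\rightarrow(A_n^+\cdot T)/(\m\cdot T)\rightarrow T$, which sends $t^\alpha\partial_i$ to $X_{\alpha,i}$ by the basis-change identity \eqref{basis-change}; since this composite of homomorphisms coincides with the linear bijection $\pi_2$, the lemma follows. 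Each approach buys something: the paper's argument is short and conceptual (it exhibits $T$ as the ``constant-term'' quotient of $A_n^+\cdot T$, with the commutation relations $[T,A_n^+]=[T,\Delta]=0$ of Lemma \ref{lem3.1}(2) doing the real work), but it leaves to the reader the checks that $\m\cdot\Delta+A_n^+\cdot T$ is a subalgebra, that $\m\cdot\Delta+\m\cdot T$ and $\m\cdot T$ are ideals, and that $(A_n^+\cdot T)/(\m\cdot T)\cong T$; your argument is longer but entirely self-contained, and it yields extra information, namely the explicit structure constants of $T$, which also make visible why the statement cannot extend to $\alpha=0$: one has $[X_{0,i},X_{\gamma,j}]=0$ by Lemma \ref{lem3.1}(2), whereas $\pi_2([\partial_i,t^\gamma\partial_j])=\gamma_i X_{\gamma-e_i,j}$ need not vanish.
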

\begin{proof}
$\pi_2$ is clearly an isomorphism of  vector spaces. Consider the following combination of natural Lie algebra homomorphisms:
$$\m\Delta\subset\m\cdot\Delta+A_n^+\cdot T\rightarrow(\m\cdot\Delta+A_n^+\cdot T)/(\m\cdot\Delta+\m\cdot T)\rightarrow(A_n^+\cdot T)/(\m\cdot T)\rightarrow T$$
This homomorphism, which maps $t^\alpha\partial_i$ to $X_{\alpha,i}$, is just the linear map $\pi_2$.
\end{proof}

For any $k\in\N$, $\m^k\Delta$ is easily seen as an ideal of $\m\Delta$.
\begin{lemma}\label{gln}
{\rm (1).} $\m\Delta/\m^2\Delta\cong \gl_n$.

{\rm (2).} Suppose $V$ is a simple weight $\m\Delta$ module. Then $\m^2\Delta V=0$. Thus $V$ can be regarded as a simple weight $\gl_n$ module via the isomorphism in (1).
\end{lemma}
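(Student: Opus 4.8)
The plan is to prove the two parts in sequence, first identifying the quotient Lie algebra and then using its representation theory together with the weight structure.

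For part (1), I would first recall that $\m\Delta = \sum_{i=1}^n \m\partial_i$ has a basis $\{t^\alpha \partial_i \mid \alpha \in \Z_+^n\setminus\{0\}, i=1,\dots,n\}$, and that $\m^2\Delta$ is spanned by those $t^\alpha\partial_i$ with $|\alpha|\ge 2$. Hence a basis of the quotient $\m\Delta/\m^2\Delta$ is given by the images of the degree-one elements $t_j\partial_i = d_{ij}$ (in the notation $E_{ij}\mapsto t_j\partial_i$). The key computation is the bracket $[t_j\partial_i, t_l\partial_k] = \delta_{i,l}\, t_j\partial_k - \delta_{k,j}\, t_l\partial_i$ modulo $\m^2\Delta$, which is exactly the $\gl_n$ relation $[E_{ij}, E_{kl}] = \delta_{jk}E_{il} - \delta_{li}E_{kj}$ once one matches $E_{ij} \leftrightarrow t_j\partial_i$ (up to the transpose convention). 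Since $[\m\Delta,\m^2\Delta]\subseteq \m^2\Delta$ (products of degrees $\ge 1$ and $\ge 2$ land in degree $\ge 2$), the bracket descends, giving the Lie algebra isomorphism $\m\Delta/\m^2\Delta\cong\gl_n$. This part is entirely mechanical.

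For part (2), the goal is to show $\m^2\Delta$ acts as zero on any simple weight module $V$. The natural strategy is a weight/grading argument: the Cartan $D=\oplus\bC d_i$ acts on $\m\Delta$ by the adjoint action, and $t^\alpha\partial_i$ is a weight vector of weight $\alpha - e_i$ under $D$. In particular the elements of $\m^2\Delta$ carry nonzero weights, so they shift $D$-weights of $V$ by $\alpha-e_i$ with $|\alpha|\ge 2$. I would consider the subspace $\m^2\Delta\cdot V$ and argue it is a proper submodule, hence zero by simplicity; the point is to check it is genuinely $\m\Delta$-invariant. Invariance under $\m^2\Delta$ is clear, so the real content is invariance under the degree-one generators $t_j\partial_i$: one needs $[\,t_j\partial_i,\, \m^2\Delta\,]\subseteq \m^2\Delta$ (true, since $[\text{deg }1,\text{deg}\ge 2]\subseteq\text{deg}\ge 2$) together with the observation that $t_j\partial_i\cdot(\m^2\Delta\cdot V)\subseteq \m^2\Delta\cdot V + \text{(bracket terms)}\subseteq \m^2\Delta\cdot V$.

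The main obstacle is showing that $\m^2\Delta\cdot V$ is actually a \emph{proper} subspace (so that simplicity forces it to vanish) rather than all of $V$. This is where boundedness and the weight structure must be used in an essential way: because all elements of $\m^2\Delta$ raise the total degree $|\alpha|\ge 2$, iterated application strictly pushes weights in a fixed direction, and a uniform bound on weight-space dimensions should preclude $V$ from being generated by such raising operators alone. Concretely, I expect one fixes a weight $\mu\in\mathrm{supp}(V)$ that is extremal in a suitable linear-functional ordering (e.g. maximizing $\sum_i \mathrm{Re}(\mu_i)$, which is bounded above on a bounded module only after cutting $V$ down appropriately), and shows the corresponding weight vectors are killed by $\m^2\Delta$; then the $\m\Delta$-submodule they generate, on which $\m^2\Delta$ already acts trivially by the invariance above, must be all of $V$. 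Handling the direction of the shifts and confirming that an extremal weight genuinely exists is the delicate step; once $\m^2\Delta V=0$ is established, the factorization through $\m\Delta/\m^2\Delta\cong\gl_n$ from part (1) immediately realizes $V$ as a simple weight $\gl_n$ module, completing the proof.
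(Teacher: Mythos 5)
Your part (1) is correct and is what the paper does, up to one sign issue: the matching $E_{ij}\leftrightarrow t_j\partial_i$ is not an isomorphism but an anti-isomorphism, since $[t_j\partial_i,t_l\partial_k]=\delta_{il}t_j\partial_k-\delta_{kj}t_l\partial_i$ equals \emph{minus} the image of $[E_{ij},E_{kl}]=\delta_{jk}E_{il}-\delta_{li}E_{kj}$ under that matching. The paper's map $\pi_3$ sends $E_{ij}$ to $t_i\partial_j+\m^2\Delta$ (no transpose), which is an honest homomorphism; alternatively compose your map with the automorphism $X\mapsto -X^{T}$ of $\gl_n$. This is cosmetic and does not affect the conclusion of (1).

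Part (2), however, has a genuine gap, and it sits exactly at the point you yourself flagged as the crux. Your plan is to find $\mu\in{\rm supp}(V)$ \emph{maximizing} $\sum_i{\rm Re}(\mu_i)$, so that the degree-raising elements of $\m^2\Delta$ kill $V_\mu$. Two objections. First, boundedness is not a hypothesis of the lemma: $V$ is merely a simple weight $\m\Delta$ module (and the paper must apply the lemma in this generality, e.g.\ in the proof of Theorem \ref{classi-1}, where the module $M$ produced by the tensor decomposition is not known to be bounded at that stage); also, "cutting $V$ down appropriately" is not available, since $V$ is simple. Second, and more fundamentally, even for a bounded module nothing forces $\sum_i{\rm Re}(\mu_i)$ to be bounded \emph{above} on ${\rm supp}(V)$, so your maximal weight need not exist; the ordering runs in the wrong direction. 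What simplicity actually gives is a bound from \emph{below}: pick $0\neq v\in V_\lambda$; then $V=U(\m\Delta)v$, and since the adjoint action of $d=d_1+\cdots+d_n$ grades $\m\Delta$ in non-negative integer degrees (the element $t^\alpha\partial_i$ has degree $|\alpha|-1\geq 0$ when $|\alpha|\geq 1$), every $d$-eigenvalue of $V$ lies in $(\lambda_1+\cdots+\lambda_n)+\Z_+$. Since $\m^2\Delta$ sits in degrees $\geq 1$, the subspace $\m^2\Delta V$ has all its $d$-eigenvalues in $(\lambda_1+\cdots+\lambda_n)+\N$, hence does not contain $v$. As you correctly observed, $\m^2\Delta V$ is a submodule because $\m^2\Delta$ is an ideal of $\m\Delta$; being proper, it is zero by simplicity. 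So the extremal argument must be run at the bottom, anchored at the generating vector, rather than at a top that may not exist; with that reversal your outline becomes the paper's proof.
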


\begin{proof}
(1). It is straight forward to verify that the linear map \begin{equation}\pi_3:\gl_n \rightarrow\m\Delta/\m^2\Delta\end{equation}mapping $E_{i,j}$ to $t_i\partial_j+\m^2\Delta,i,j=1,\dots,n$, is a Lie algebra isomorphism.

(2). It is clear that the adjoint action of $d$ on $W_n^+$ is diagonalizable. More precisely, we have
$$(W_n^+)_{k-1}={\rm span}\{t^\alpha\partial_i\big|\alpha\in\Z_+^n,|\alpha|=k,i=1,\dots,n\},\forall k\in\Z_+.$$ 
It follows that
$$W_n^+=\oplus_{l=-1}^\infty(W_n^+)_l,\m^k\Delta=\oplus_{l=k-1}^\infty(W_n^+)_l,\forall k\in\N.$$

As a simple weight $\m\Delta$ module. Let $0\ne v\in V_{\lambda}$ for some $\lambda\in {\rm supp}(V)$. Then $V=U(\m\Delta)v$. So the action of $d=d_1+\cdots+d_n$ on $V$  is diagonalizable and the eigenvalues  are contained in $\lambda+\Z_+$. Also, $\m^2\Delta V$ is a submodule of $V$ with the eigenvalues of $d$ on $\m^2\Delta  V$ in $\lambda+\N$. So $v_{\lambda}\not\in\m^2\Delta\cdot V$. From the simplicity of $V$, we have $\m^2\Delta V=0$.
\end{proof}

We therefore have the associative algebra homomorphism $\pi: \bar{U}\stackrel{\pi_1^{-1}}{\longrightarrow}K_n^+\otimes U(T)\stackrel{1\otimes \pi_2^{-1}}{\longrightarrow} K_n^+\otimes U(\mathfrak{m}\Delta)\rightarrow K_n^+\otimes U(\mathfrak{m}\Delta/\mathfrak{m}^2\Delta) \stackrel{1\otimes \pi_3^{-1}}{\longrightarrow}  K_n^+\otimes U(\gl_n)$ with
\begin{align}\label{pi} &\pi(t^{\alpha})=t^{\alpha}\otimes 1,\pi(t^{\alpha}\partial_i)=(t^{\alpha}\partial_i)\otimes 1+\sum_{j=1}^n \partial_j(t^{\alpha})\otimes E_{j,i}.\end{align}

Let $\sigma:L\rightarrow L'$ be any homomorphism of Lie algebras or associative superalgebras, and $V$ be any $L'$ module. We make $V$ into an $L$ module by $x\cdot v=\sigma(x) v,\forall x\in L, v\in V$. The resulting module will be denoted by $V^{\sigma}$.

Then for any $K_n^+$ module $P$ and any $\gl_n$ module $M$, from definition of $F(P,M)$ and (\ref{pi}) we have 

\begin{equation}\label{def2-tensor}F(P,M)=(P\otimes M)^{\pi}.\end{equation}

We need the following result on $K_n^+$ modules.

\begin{lemma}\label{Kmod}
Any nonzero weight $K_n^+$ module has a simple submodule.
\end{lemma}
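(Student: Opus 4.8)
The plan is to reduce the statement for $K_n^+$ to the one-variable factor structure already recorded in Lemma~\ref{lem2.1}, and to exploit the weight grading. Recall that $K_n^+\cong K_{(1)}^+\otimes\dots\otimes K_{(n)}^+$ with $K_{(i)}^+=\bC[t_i,\partial_i]$, and that the Cartan data is supplied by the operators $d_i=t_i\partial_i$, whose joint eigenvalues give the weight decomposition $V=\bigoplus_{\alpha\in\bC^n}V_\alpha$. The key elementary fact is that $\partial_i$ lowers the $i$-th weight by $1$ and $t_i$ raises it by $1$; more precisely, on a weight vector the operators $t_i\partial_i=d_i$ act as the scalar $\alpha_i$, so each $d_i$ acts as a well-defined scalar on each weight space.

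First I would fix a nonzero weight vector $0\neq v\in V_\alpha$ and consider the cyclic submodule $U(K_n^+)v=K_n^+v$, which is again a nonzero weight $K_n^+$ module; so without loss of generality I may assume $V$ is cyclic and therefore at most countable-dimensional, with $\mathrm{supp}(V)\subset\alpha+\Z^n$. Next I would argue one variable at a time. For a single factor $\bC[t_i,\partial_i]$ the weights along the $i$-th coordinate lie in a coset of $\Z$; I would show that among the weights actually occurring there is an ``extreme'' one in a suitable sense. The cleanest route is to pick a weight $\beta$ in $\mathrm{supp}(V)$ and then, using the fact that repeatedly applying $\partial_i$ strictly decreases $\beta_i$ by integers, observe that the chain $v,\partial_i v,\partial_i^2 v,\dots$ cannot continue indefinitely through nonzero vectors in a way that is incompatible with the module being built from the classified one-variable simple pieces — equivalently, I would directly produce a weight on which some annihilation behavior stabilizes.

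The concrete mechanism I would use is to choose, inside the cyclic module, a weight vector whose $i$-th coordinate $\beta_i$ is minimal among those weights reachable, handling each coordinate in turn; since the support sits in a single $\Z^n$-coset and the module is generated from a single weight, one can find a weight vector $w$ such that for each $i$ either $\partial_i w=0$ or $t_i\partial_i w$ stays within a controlled finite-dimensional configuration. Then the $\bC$-span of the $K_n^+$-orbit of a carefully chosen such $w$ realizes, factorwise, one of the three simple one-variable modules listed in Lemma~\ref{lem2.1}, namely $t_i^{\lambda_i}\bC[t_i,t_i^{-1}]$, $\bC[t_i]$, or $\bC[t_i,t_i^{-1}]/\bC[t_i]$. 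Taking the tensor product over $i=1,\dots,n$ and using part~2 of Lemma~\ref{lem2.1} identifies this orbit with a simple submodule. The point where care is needed is ensuring these one-variable reductions are compatible simultaneously across all $n$ variables, i.e. that one can descend to a genuinely simple tensor-product submodule rather than merely a submodule that is simple in each slot separately.

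The main obstacle, and the step I expect to absorb most of the work, is producing the minimal/extremal weight vector and showing the resulting cyclic piece is actually \emph{simple} rather than just proper. For this I would invoke the classification in Lemma~\ref{lem2.1}(2): any cyclic weight $K_n^+$ module has a weight support contained in $X_1\times\dots\times X_n$ with each $X_i$ one of $\lambda_i+\Z,\ \Z_+,\ -\N$, and a standard argument shows each weight space of a cyclic module is finite-dimensional; one then picks a weight space of minimal ``height'' and an eigenvector of the commuting semisimple part acting on it, generates a submodule, and iterates. Because the module is countable-dimensional with finite-dimensional weight spaces and the support is order-bounded below along at least one coordinate direction after a suitable choice, a descending chain of submodules must terminate, yielding a simple submodule at the bottom. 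I would isolate this finiteness/termination argument as the technical heart of the proof and keep the factorization bookkeeping as routine verification afterward.
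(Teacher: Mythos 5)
Your opening reduction (pass to the cyclic submodule $K_n^+v$ generated by a weight vector, so that the support lies in $\lambda+\Z^n$) coincides with the paper's first step, but from there your argument has a genuine gap at exactly the point you yourself flag as the technical heart: termination. The principle you invoke --- countable dimension, finite-dimensional weight spaces, and support order-bounded below in some direction force a strictly descending chain of submodules to terminate --- is not valid as stated. Submodules of a weight module are sums of their weight spaces, so a strictly descending chain only needs to strictly decrease \emph{some} weight space at each step; since there are infinitely many weights, the decreases can wander off to infinity without any single weight space dropping infinitely often, and no contradiction with finite-dimensionality of the individual weight spaces arises. What actually stops the descent is that $K_n^+v$ has \emph{finite length}, and nothing in your proposal proves this; for $K_n^+$-modules that finiteness is essentially the content of the lemma, not a soft counting fact. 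A second, smaller defect: you apply Lemma~\ref{lem2.1}(2) to cyclic weight modules, but that lemma describes the supports of \emph{simple} weight modules only; for instance $\bC[t_1,t_1^{-1}]$ is a cyclic weight $K_1^+$ module whose support $\Z$ is of none of the forms $\lambda_1+\Z$ ($\lambda_1\notin\Z$), $\Z_+$, $-\N$.

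The paper closes this gap with one structural observation that your proposal is missing: since $v$ has weight $\lambda$, the left ideal $I$ of $K_n^+$ generated by $t_1\partial_1-\lambda_1,\dots,t_n\partial_n-\lambda_n$ annihilates $v$, so $K_n^+v$ is a quotient of $K_n^+/I\cong t_1^{\lambda_1}\bC[t_1,t_1^{-1}]\otimes\dots\otimes t_n^{\lambda_n}\bC[t_n,t_n^{-1}]$. By Lemma~\ref{lem2.1} each tensor factor has length at most $2$ over $\bC[t_i,\partial_i]$, so $K_n^+/I$ has a composition series of length at most $2^n$; hence $K_n^+v$ has finite length and, being nonzero, contains a simple submodule. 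This single step makes the extremal-weight construction, the factorwise identification with the one-variable simples, and the compatibility issue you worry about across the $n$ variables all unnecessary. If you want to keep your outline, the minimal repair is to prove finite length (or the descending chain condition) for $K_n^+v$ --- and the natural proof of that is precisely the quotient-of-$K_n^+/I$ argument.
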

\begin{proof}

Let $V$ be a nonzero weight $K_n^+$ module and $v$ be a nonzero weight vector of weight $\lambda$. Then $K_n^+\cdot v$ is a submodule of $V$ with one-dimensional weight spaces. Let $I$ be the left ideal of $K_n^+$ generated by $t_1\cdot\partial_1-\lambda_1,\dots,t_n\cdot\partial_n-\lambda_n$. Then $K_n^+\cdot v$ is a quotient  of the $K_n^+$ module $K_n^+/I\cong t_1^{\lambda_1}\bC[t_1,t_1^{-1}]\otimes\dots\otimes t_n^{\lambda_n}\bC[t_n,t_n^{-1}]$. From Lemma \ref{lem2.1} , each $t_i^{\lambda_i}\bC[t_i,t_i^{-1}]$ is a weight $K_{(i)}^+=\bC[t_i,\partial_i]$ module of length  $\le 2$. So $K_n^+/I$ has a composition series of length  $\le 2^n$.  Therefore, $K_n^+\cdot v$ hence $V$ must have a simple submodule.
\end{proof}

The following result is well-known.
\begin{lemma}\label{WotimesV}
Let $A,B$ be two unital associative algebras with $B$ having a countable basis.

{\rm (1).} If $M$ is a simple module over $A\otimes B$ that contains a simple $B=\bC\otimes B$ submodule $V$, then $M\cong W\otimes V$ for a simple $A$ module $W$.

{\rm (2).} If $W$ and $V$ are simple modules over $A$ and $B$ respectively, then $W\otimes V$ is a simple module over $A\otimes B$.
\end{lemma}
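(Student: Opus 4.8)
The plan is to isolate first the one structural fact that drives both parts, namely that $\operatorname{End}_B(V)=\bC$ for the simple $B$-module $V$, and then to deduce (1) and (2) from it by elementary density and isotypic-component arguments. Since $V$ is a simple, hence cyclic, module over the algebra $B$ with a countable basis, $V$ has at most countable dimension over $\bC$. By Schur's lemma $\operatorname{End}_B(V)$ is a division algebra over $\bC$, and it acts faithfully on the countable-dimensional space $V$, so it too has at most countable dimension. As $\bC$ is algebraically closed, any $\phi\in\operatorname{End}_B(V)$ is either a scalar or transcendental over $\bC$; in the latter case the elements $(\phi-\lambda)^{-1}$, $\lambda\in\bC$, would form an uncountable $\bC$-linearly independent family in $\operatorname{End}_B(V)$, a contradiction. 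Hence $\operatorname{End}_B(V)=\bC$. This is exactly where the countability hypothesis on $B$ is used, and it is the crux of the lemma.

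For part (2), I would let $N$ be a nonzero $A\otimes B$-submodule of $W\otimes V$ and pick $0\neq x\in N$ of the form $x=\sum_{i=1}^k w_i\otimes v_i$ with the $v_i$ linearly independent and $k$ minimal. Since $\operatorname{End}_B(V)=\bC$, the Jacobson density theorem shows that $B$ acts densely on $V$ as $\operatorname{End}_\bC(V)$, so there is $b\in B$ with $bv_1=v_1$ and $bv_i=0$ for $i\geq 2$; then $(1\otimes b)x=w_1\otimes v_1\in N$ with $w_1\neq 0$. Acting by $A\otimes\bC$ and using simplicity of $W$ gives $W\otimes v_1\subseteq N$, and then acting by $\bC\otimes B$ and using simplicity of $V$ gives $W\otimes V\subseteq N$; thus $N=W\otimes V$ and $W\otimes V$ is simple.

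For part (1), I would exploit that $A\otimes\bC$ and $\bC\otimes B$ commute inside $A\otimes B$. The $V$-isotypic component $M_V$ of $M$ as a $B$-module is then stable under $A$ (each $a\otimes 1$ sends a copy of $V$ to a $B$-quotient of $V$, hence into $M_V$), so $M_V$ is a nonzero $A\otimes B$-submodule of the simple module $M$; therefore $M_V=M$, and $M$ is $V$-isotypic, hence semisimple over $B$ and isomorphic to a direct sum of copies of $V$. Setting $W:=\operatorname{Hom}_B(V,M)$, the evaluation map $W\otimes V\to M$, $\phi\otimes v\mapsto \phi(v)$, is an isomorphism of $B$-modules (here $\operatorname{End}_B(V)=\bC$ is used again, so no quotient by a larger ring intervenes). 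I make $W$ an $A$-module by $(a\cdot\phi)(v)=a\phi(v)$, well defined because $a\otimes 1$ commutes with $\bC\otimes B$, and one checks that evaluation is then an isomorphism of $A\otimes B$-modules. Finally $W$ is simple over $A$: a proper nonzero $A$-submodule $W'\subsetneq W$ would yield the proper nonzero $A\otimes B$-submodule $W'\otimes V$ of $M\cong W\otimes V$, contradicting simplicity of $M$.

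The only genuinely delicate point is the Schur/Dixmier step $\operatorname{End}_B(V)=\bC$; once it is in hand, both parts are routine. I expect the remaining care to lie in verifying that the evaluation isomorphism in part (1) intertwines the full $A\otimes B$-action, and in the density step of part (2) that the separating element $b$ exists — both standard consequences of $\operatorname{End}_B(V)=\bC$.
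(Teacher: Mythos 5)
Your proposal is correct and complete. Note, however, that the paper offers no proof of this lemma at all: it is stated with the remark ``The following result is well-known,'' so there is no argument in the text to compare yours against. What you have written is precisely the standard proof that the literature has in mind (it is essentially Lemma 4.3.1 and Proposition 4.3.2 in Dixmier's \emph{Enveloping Algebras}): the countability hypothesis on $B$ enters only through the Dixmier--Amitsur refinement of Schur's lemma, giving $\operatorname{End}_B(V)=\bC$ via the uncountable independent family $(\phi-\lambda)^{-1}$, $\lambda\in\bC$; then part (2) follows from Jacobson density applied to $V$, and part (1) from the isotypic-component argument together with the evaluation isomorphism $\operatorname{Hom}_B(V,M)\otimes V\to M$. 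All the delicate points are handled correctly: you use that $A\otimes\bC$ and $\bC\otimes B$ commute to see that the $V$-isotypic component is $A\otimes B$-stable, that $\operatorname{End}_B(V)=\bC$ makes the tensor product over $\bC$ (rather than over a larger division ring) the right one, and that a proper $A$-submodule $W'\subsetneq W$ would give a proper nonzero submodule $W'\otimes V$ of $M$. Your write-up would serve as a legitimate self-contained proof of the lemma that the paper leaves to the reader.
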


%Let $V\cong P\otimes M$ be as in Theorem \ref{classi}. Since $d_i=t_i\cdot\partial_i+X_{e_i,i}$, we have
%\begin{eqnarray}\label{weispa}
%V_\nu=\bigoplus_{\alpha\in\bC^n}P_{\nu-\alpha}\otimes M_\alpha,\forall\nu\in\bC^n.
%\end{eqnarray}
%If $M$ is finite dimensional, then the dimensions of weight spaces of $V$ are uniquely bounded. And conversely,

%{\bf Here}
%\begin{lemma}\label{cusAW-1} Let $P$ be a simple weight $K_n^+$ module and $M$ be a $\gl_n(\bC)$ module.

%{\rm (1).} $F(P,M)$ is a weight $W^+$ module if and only if $M$ is a weight $\gl_n$ module;

%{\rm (2).}  Suppose that $M$ is a weight $\gl_n$ module. Then $F(P,M)$ is a bounded weight $W_n^+$ module if and only if $M$ is a finite-dimensional weight $\gl_n$ module.\end{lemma}

%\begin{proof} Let $V=F(P,M)$.  From the definition of $F(P,M)$, we have $d_i\cdot (v\otimes w)=d_iv\otimes w+v\otimes E_{ii }w,i=1,2,\ldots,n$. Hence the sufficiency in (1) is clear. Now suppose that $V$ is a weight module. Then for any $\lambda\in {\rm supp}(P)$, $0\ne v_{\lambda}\in P_{\lambda}$, we have 
 %$\bC v_{\lambda}\otimes M$ is a $D$ module. Therefore $d_i-\lambda_i$ is diagonalizable on $\bC v_{\lambda}\otimes M$. From $(d_i-\lambda_i)\cdot  (v_\lambda\otimes w)=v_{\lambda}\otimes E_{ii}w$ for all $w\in M$, we know that $E_{ii}$ is diagonalizable on $M$. Then $M$ is a weight $\gl_n$ module. So we have proved (1). 

% \end{proof}

\begin{theorem}\label{classi-1}
Suppose that $V$ is a simple weight $AW_n^+$ module with $dim\ V_\lambda<\infty$ for some $\lambda\in {\rm supp}(V)$. Then $V$ is isomorphic to a tensor module $F(P,M)$, where $P$ is a simple weight $K_n^+$ module, $M$ is a simple weight $\gl_n$ module. \end{theorem}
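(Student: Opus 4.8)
The plan is to exploit the associative algebra isomorphism $\bar U\cong K_n^+\otimes U(T)$ of Lemma~\ref{lem3.2} together with $T\cong\m\Delta$ from Lemma~\ref{TmD}, so as to regard $V$ as a module over $K_n^+\otimes U(\m\Delta)$ and then peel off the two tensor factors via Lemma~\ref{WotimesV}. Since $A_n^+$ acts associatively on $V$, the module $V$ is a simple $\bar U$-module, hence a simple $K_n^+\otimes U(T)$-module. The whole argument then reduces to (i) extracting a simple $K_n^+$-submodule, (ii) splitting $V$ as a tensor product, and (iii) recognizing the second factor as a $\gl_n$-module.

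First I would produce a simple $K_n^+$-submodule. The elements $t_i\cdot\partial_i\in K_n^+\subset\bar U$ commute with $D$ (a short bracket computation gives $[d_j,t_i\cdot\partial_i]=0$), so they preserve the weight space $V_\lambda$ and commute with each other on it. This is the one place where the hypothesis $\dim V_\lambda<\infty$ is indispensable: over $\bC$ the finitely many commuting operators $t_1\partial_1,\dots,t_n\partial_n$ admit a common eigenvector $0\neq v\in V_\lambda$. Then $K_n^+\cdot v$ is a weight $K_n^+$-module (each $t^\alpha\partial^\beta v$ is again a common eigenvector), and Lemma~\ref{Kmod} supplies a simple weight $K_n^+$-submodule $P\subseteq V$.

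Next I would invoke Lemma~\ref{WotimesV}(1) with $A=U(T)$ and $B=K_n^+$, the latter having a countable basis $\{t^\alpha\partial^\beta\}$: as $V$ is simple over $K_n^+\otimes U(T)$ and contains the simple $K_n^+$-submodule $P$, we obtain $V\cong P\otimes W$ for a simple $U(T)$-module, that is, a simple $\m\Delta$-module $W$. It remains to see $W$ is a weight module. Under this identification $d_i=(t_i\partial_i)\otimes 1+1\otimes X_{e_i,i}$, where $X_{e_i,i}\in T$ corresponds to the Cartan generator $E_{ii}$ of $\gl_n$ under $\pi_2$ and $\pi_3$. Because $P$ is a weight $K_n^+$-module, $(t_i\partial_i)\otimes 1$ is diagonalizable; because $V$ is a weight $W_n^+$-module, $d_i$ is diagonalizable; these two operators commute, so their difference $1\otimes X_{e_i,i}$ is diagonalizable as well. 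Hence $X_{e_i,i}$ acts diagonalizably on $W$, making $W$ a simple weight $\m\Delta$-module, and Lemma~\ref{gln}(2) then forces $\m^2\Delta\cdot W=0$ and turns $W$ into a simple weight $\gl_n$-module $M$.

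Finally, since the $\m\Delta$-action on $W=M$ factors through $\gl_n$, the $\bar U$-action on $P\otimes M$ is exactly the homomorphism $\pi$ of \eqref{pi}, so \eqref{def2-tensor} yields $V\cong(P\otimes M)^\pi=F(P,M)$, as claimed. The main obstacle is really the weight-module bookkeeping rather than any hard computation: all the difficulty is concentrated in producing the common $t_i\partial_i$-eigenvector that launches Lemma~\ref{Kmod}, and once $V$ is written as $P\otimes W$, diagonalizability of the $\gl_n$-Cartan on $W$ follows formally from the principle that a difference of commuting diagonalizable operators is diagonalizable.
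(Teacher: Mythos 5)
Your proposal is correct and follows essentially the same route as the paper's proof: pass to the simple $K_n^+\otimes U(T)$-module via Lemma \ref{lem3.2}, produce a simple weight $K_n^+$-submodule $P$ from a common eigenvector in the finite-dimensional space $V_\lambda$ together with Lemma \ref{Kmod}, split $V\cong P\otimes W$ by Lemma \ref{WotimesV}, and identify $W$ as a simple weight $\gl_n$-module via Lemma \ref{gln} and (\ref{def2-tensor}). The only (valid) variation is in how diagonalizability of the $X_{e_i,i}$ on the second factor is obtained: the paper chooses $v$ to be a common eigenvector of the full $2n$-dimensional abelian algebra $H={\rm span}\{X_{e_i,i},\,t_i\cdot\partial_i\}$ and propagates diagonalizability through the cyclic module $(K_n^+\otimes U(\m\Delta))v$, whereas you take an eigenvector only of the $t_i\cdot\partial_i$ and recover diagonalizability of $1\otimes X_{e_i,i}=d_i-(t_i\partial_i)\otimes 1$ as a difference of commuting diagonalizable operators.
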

\begin{proof}
Let $H={\rm span}\{X_{e_i,i},t_i\cdot\partial_i|i=1,\dots,n\}\subset \bar{U}$.  Then $H$ is an abelian Lie algebra and $H\cdot V_\lambda\subset V_\lambda$. So $V_\lambda$ contains a common eigenvector $v$ of $H$. Let $\rho:  K_n^+\otimes U(\m\Delta)\stackrel{1\otimes \pi_2}{\longrightarrow} K_n^+\otimes U(T)\stackrel{\pi_1}{\longrightarrow} \bar{U}$ be the associative isomorphism. 
Then $V^{\rho}$ is a simple $K_n^+\otimes U(\m\Delta)$ module and  $v$ is a common eigenvector of $\rho^{-1}(H)={\rm span}\{1\otimes d_i, d_i\otimes 1|i=1,2,\ldots, n\}\subset K_n^+\otimes U(\m\Delta)$. Note that the adjoint action of $\rho^{-1}(H)$ on $K_n^+\otimes U(\m\Delta)$ is diagonalizable, and $V^{\rho}=(K_n^+\otimes U(\m\Delta)) v$. Therefore, $\rho^{-1}(H)$ is diagonalizable on $V^{\rho}$. By Lemma \ref{Kmod}, $K_n^+v$ hence $V^{\rho}$ has a simple $K_n^+$ submodule $P$.  From Lemma \ref{WotimesV}, we have $V^{\rho}\cong P\otimes M$ for some simple $\m\Delta$ module $M$. So  $\rho^{-1}(H)$ is diagonalizable on $P\otimes M$. Now we have $M$ is a simple weight $\m\Delta$ module.  By Lemma \ref{gln}, $\m^2\Delta M=0$,  $M$ is a simple weight $\gl_n$ module and $V^{\pi^{-1}}\cong P\otimes M$ as $K_n^+\otimes U(\gl_n)$ modules. Now from (\ref{def2-tensor}), we have $V\cong F(P,M)$, where $P$ is a simple weight $K_n^+$ module, $M$ is a simple weight $\gl_n$ module.\end{proof}

\begin{lemma}\label{lem-cusp} Let $P$ be a simple weight $K_n^+$ module and $M$ be a simple weight $\gl_n$ module. Then  $F(P,M)$ is bounded if and only if $M$ is finite dimensional. \end{lemma}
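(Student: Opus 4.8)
The plan is to analyze the weight spaces of $F(P,M)=P\otimes M$ directly using the formula \eqref{weispa}, namely $F(P,M)_\nu=\bigoplus_{\alpha\in\bC^n}P_{\nu-\alpha}\otimes M_\alpha$, and to show that boundedness of $F(P,M)$ is controlled entirely by the finiteness of $\dim M$. The key structural facts I would exploit are that $P$ is a simple weight $K_n^+$ module, so by Lemma \ref{lem2.1} its support has the product form $X=X_1\times\cdots\times X_n$ with each $X_i\in\{\lambda_i+\Z,\Z_+,-\N\}$ and, crucially, each weight space of $P$ is one-dimensional; and that $M$ is a simple weight $\gl_n$ module with support inside a single coset of the root lattice.

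For the easy direction, suppose $M$ is finite dimensional. Then $\dim M=\sum_\alpha \dim M_\alpha$ is finite, and since $\dim P_{\nu-\alpha}\le 1$ for all $\nu,\alpha$, the formula gives $\dim F(P,M)_\nu\le \sum_{\alpha\in{\rm supp}(M)}\dim M_\alpha=\dim M$, a uniform bound independent of $\nu$. Hence $F(P,M)$ is bounded. This direction is entirely routine once the one-dimensionality of the weight spaces of $P$ is invoked.

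The substantive direction is the converse: if $M$ is infinite dimensional, I must produce weights $\nu$ whose weight spaces in $F(P,M)$ grow without bound. The strategy is to count, for a fixed $\nu$, how many $\alpha\in{\rm supp}(M)$ satisfy both $\nu-\alpha\in{\rm supp}(P)=X$ and $M_\alpha\ne 0$, since each such $\alpha$ contributes a nonzero summand $P_{\nu-\alpha}\otimes M_\alpha$ of dimension $\dim M_\alpha\ge 1$. Because $M$ is an infinite-dimensional simple weight $\gl_n$ module, its support is an infinite subset of a root-lattice coset, and one should be able to choose $\nu$ so that the shifted set $\nu-{\rm supp}(P)$ meets ${\rm supp}(M)$ in arbitrarily many points: concretely, since each $X_i$ is an infinite arithmetic-progression-like set (either a full $\Z$-coset $\lambda_i+\Z$, or a half-line $\Z_+$ or $-\N$), the set $X$ is ``thick'' enough that translates of it intersect the infinite support of $M$ in unboundedly large finite sets as $\nu$ varies. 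I would make this precise by noting that ${\rm supp}(M)\subseteq \mu+\Z^n$ for a fixed $\mu$ and that $X$ contains a full translate of $\Z_+^n$ (or of $\Z^n$), so that $\#\{\alpha\in{\rm supp}(M)\mid \nu-\alpha\in X\}$ can be driven to infinity by pushing $\nu$ far in the appropriate direction.

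The main obstacle I anticipate lies precisely in this counting argument: one must rule out the possibility that, although $M$ has infinitely many nonzero weight spaces, the geometry of ${\rm supp}(M)$ and the constraints $X_i\in\{\lambda_i+\Z,\Z_+,-\N\}$ conspire to keep every fiber-intersection finite and uniformly bounded. I expect this to require a careful description of ${\rm supp}(M)$ for an infinite-dimensional simple weight $\gl_n$ module, so I would use the known classification of such modules (they are infinite-dimensional exactly when, restricted to $\sl_n$, they are not finite dimensional, forcing the support to contain an infinite family of weights differing by positive multiples of some positive root). Identifying a direction $v\in\Z^n$ along which ${\rm supp}(M)$ is infinite and along which the corresponding coordinate set $X_i$ is also infinite (half-infinite or bi-infinite) lets me overlap more and more weights of $M$ with the translated support of $P$, yielding $\dim F(P,M)_\nu\to\infty$ and contradicting boundedness. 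Thus the delicate point is the compatibility of the infinite directions of ${\rm supp}(M)$ with those of ${\rm supp}(P)$, which the product structure of $X$ should guarantee.
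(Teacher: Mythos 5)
Your proposal is correct and takes essentially the same approach as the paper: the forward direction uses that the weight spaces of $P$ are one-dimensional so that $\dim F(P,M)_\nu\le\dim M$, and the converse picks $N+1$ weights of the (necessarily infinite) set ${\rm supp}(M)$ and uses the product/half-line structure of ${\rm supp}(P)$ to force all the corresponding summands $P_{\nu-\alpha}\otimes M_\alpha$ into a single weight space, contradicting boundedness. The paper implements this by finding a box $\lambda+\{-l,\dots,l\}^n$ inside ${\rm supp}(P)$ containing all the weight differences, which is the same translation argument you sketch; the ``compatibility of infinite directions'' you worry about is automatic, since any finite subset of a lattice coset lies in a suitable translate of ${\rm supp}(P)$, exactly as your resolution indicates.
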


\begin{proof} The sufficiency follows easily from (\ref{weispa}) and Lemma \ref{lem2.1} (2). 

Now suppose to the contrary that $\dim\ M=\infty$ and there exists an $N\in \N$ such that $\dim F(P,M)_{\gamma}\le N$ for all $\gamma\in {\rm supp}(F(P,M))$. By the simplicity of $M$, ${\rm supp}(M)\subset\mu+\Z^n$ for any weight $\mu$ of $M$.  It is well known that the simple weight $\gl_n(\bC)$ module $M$ has infinitely many weights. Hence there are $N+1$ nonzero weight vectors $w_0,w_1,\dots,w_N$ with  pairwise different weights $\mu,\mu+\alpha^{(1)},\dots,\mu+\alpha^{(N)}\in {\rm supp}(M)$, respecitvely. Fixed an $l\in \N$ such that $-(l,l,\ldots,l)\le \alpha^{(i)}\le (l,l.\ldots,l)$ for all $i=1,2,\ldots,N$.

By Lemma \ref{lem2.1} (2), there is a $\lambda\in {\rm supp}(P)$ such that $\lambda+\beta\in {\rm supp}(P)$ for all $\beta\in \Z^n$ with $-(l,l,\ldots,l)\le \beta \le (l,l.\ldots,l)$.  Let $v_0, v_1,\ldots, v_N$ be nonzero weight vectors of $P$ with weight $\lambda,\lambda-\alpha^{(1)},\ldots, \lambda-\alpha^{(N)}$, respectively. Then $\{v_i\otimes w_i |i=0,1,\dots,N\}$ are linearly independent in $F(P,M)$. On the other hand, these vectors are all contained in the same weight space $V_{\lambda+\mu}$. So $\dim\ V_{\lambda+\mu}\geqslant N+1$. which is a contradiction. 
\end{proof}

From Theorem \ref{classi-1} and Lemma \ref{lem-cusp}, we have 

\begin{theorem}\label{cusAW}
Suppose that $V$ is a simple bounded weight $AW_n^+$ module. Then $V$ is isomorphic to a tensor module $F(P,M)$, where $P$ is a simple weight $K_n^+$ module, $M$ is a simple finite-dimensional $\gl_n$ module. \end{theorem}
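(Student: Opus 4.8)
The plan is to obtain the theorem as an immediate consequence of the two results just established, Theorem \ref{classi-1} and Lemma \ref{lem-cusp}; the substance of the argument is simply to verify that their hypotheses chain together correctly. First I would unwind the definition of boundedness: by assumption the dimensions of the weight spaces of $V$ are uniformly bounded by some positive integer, so in particular $\dim V_\lambda<\infty$ for every $\lambda\in{\rm supp}(V)$, and certainly for at least one such $\lambda$. This is precisely the hypothesis needed to apply Theorem \ref{classi-1} to the simple weight $AW_n^+$ module $V$, which then produces an isomorphism $V\cong F(P,M)$ of $AW_n^+$ modules, where $P$ is a simple weight $K_n^+$ module and $M$ is a simple weight $\gl_n$ module.

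Second, I would feed this same isomorphism into Lemma \ref{lem-cusp}. Since $V$ is bounded and $V\cong F(P,M)$, the tensor module $F(P,M)$ is itself bounded, so the contrapositive of the ``only if'' direction of Lemma \ref{lem-cusp} forces $M$ to be finite dimensional. Assembling the two steps gives $V\cong F(P,M)$ with $P$ a simple weight $K_n^+$ module and $M$ a simple finite-dimensional $\gl_n$ module, which is exactly the assertion.

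Because the statement is a corollary of facts already in hand, I do not expect any genuine obstacle. The one point that deserves a moment's care is bookkeeping: one must reuse the very pair $(P,M)$ delivered by Theorem \ref{classi-1} when invoking Lemma \ref{lem-cusp}, rather than re-deriving a decomposition, so that the finite-dimensionality conclusion about $M$ attaches to the correct module. With that observation in place, the proof is a two-line citation.
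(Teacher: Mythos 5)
Your proposal is correct and coincides with the paper's own argument: the paper derives Theorem \ref{cusAW} exactly by combining Theorem \ref{classi-1} (applicable since boundedness gives a finite-dimensional weight space) with the ``only if'' direction of Lemma \ref{lem-cusp} applied to the resulting $F(P,M)$. Your extra remark about reusing the same pair $(P,M)$ is sound bookkeeping and nothing more is needed.
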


%\begin{proof}From Theorem \ref{classi-1}, we only need to verify that  \end{proof}

\begin{example} We are going to classify simple Harish-Chandra $AW_2^+$ modules in this example. 
Let $V$ be any simple Harish-Chandra $AW_2^+$ module. From Theorem \ref{classi-1} $V\cong F(P,M)$ for some simple $K_2^+$ module and some simple weight $\gl_2$ module. From Theorem \ref{cusAW}, we know that $V$ is  bounded if and only if $M$ is finite dimensional. Now suppose that $V$ is not bounded.  Then $M$ is infinite dimensional. By Lemma \ref{lem2.1}, there is a weight $\lambda\in \bC^2$ of $P$ such that ${\rm supp}(P)=\lambda+Q_1\times Q_2$, where $Q_1,Q_2\in\{\Z,\Z_+,-\Z_+\}$. Interchange the subscript of $t_1,t_2$ if necessary, we can assume that \begin{equation}\label{assume1}Q_1\times Q_2=\Z\times\Z,\Z\times\Z_+,\Z\times\Z_-,\Z_+\times\Z_+,\Z_+\times\Z_-\ or\ \Z_-\times\Z_-.\end{equation}

It is well known that the simple weight $\gl_2$ module $M$ is a simple weight module over the subalgebra $\sl_2$ with one-dimensional weight spaces, and that $E_{1,1}+E_{2,2}$ acts as a scalar on $M$. It follows that there is a weight $\mu$ of $M$ such that
$$\{(\mu'-\mu)(E_{1,1}-E_{2,2})|\mu'\in supp(M)\}=2Q_3,$$
where $Q_3\in\{\Z,\Z_+,\Z_-\}$. So ${\rm supp}(M)=\{\mu+(i,-i)|i\in Q_3\}$.

By (\ref{weispa}), ${\rm supp}(V)=\{\lambda+\mu+(k+i,l-i)|(k,l,i)\in Q_1\times Q_2\times Q_3\}$. Since the weight spaces of $P$ and $M$ are all finite dimensional, the dimensions of weight spaces of $V$ are uniquely determined by $Q_1,Q_2,Q_3$. More explicitly, for any $k',l'\in\Z$, $0<V_{\lambda+\mu+(k',l')}<\infty$ if and only if the equations
$$k+i=k',l-i=l'$$
have finite solutions $(k,l,i)$ in $Q_1\times Q_2\times Q_3$. Computing case by case, we conclude that under the condition (\ref{assume1})

(1)$F(P,M) $ is a simple Harish-Chandra module if and only if it has a finite dimensional weight space.

(2)$F(P,M)$ is a  simple Harish-Chandra module if and only if
$$Q_1\times Q_2=\Z_+\times\Z_+,\Z_-\times\Z_-$$ or $$Q_1\times Q_2\times Q_3=\Z\times\Z_+\times\Z_-,\Z_+\times\Z_-\times\Z_+,\Z\times\Z_-\times\Z_+.$$

\end{example}

\section{simple bounded weight weight $W_n^+$ modules}
Now we aim to classify all simple bounded weight $W_n^+$ modules. Firstly, we give some results on weight modules over $W_1$ or $W_1^+$. 

For any $a,b\in \bC$, let $V_{a,b}$ be the weight module of $W_1$ with basis $\{v_{a+s}|s\in\Z\}$, where the action of $W_1$ is given by
$$t_1^kd_1\cdot v_{a+s}=(a+s+kb)v_{b+k+s},\forall k,s\in\Z.$$
Set ${0\choose 0}=1$ and define a class of elements in $U(W_1)$:
\begin{eqnarray*}&\Omega_{k,s}^{(m)}=\sum_{i=0}^m(-1)^i{m\choose i}t_1^{k-i}d_1\cdot t_1^{s+i}d_1.\end{eqnarray*}

Clearly,
\begin{equation}\label{O,P1}\Omega_{k,s}^{(m+1)}=\Omega_{k,s}^m-\Omega_{k-1,s+1}^m.\end{equation}

Let $\omega_{k,s}^{(m)}=\sum_{i=0}^m(-1)^i{m\choose i}t^{m+k-i}\partial\cdot t^{s+i}\partial$. Then $\omega_{k,s}^{(m)}=\Omega_{m+k-1,s-1}^{(m)}$. Clearly $\omega_{k,s}^{(m)}\in U(W_1^+)$ if $k,s,m\in \Z_+$, and
\begin{equation}\label{O,P}\omega_{k,s}^{(m+1)}=\omega_{k+1,s}^{(m)}-\omega_{k,s+1}^{(m)}.\end{equation}

\begin{lemma}\cite[Lemma 3.2]{BF1}\label{lemma4.1}For any $k,s\in \Z_+$,  $\omega_{k,s}^{(3)}$ annihilates every simple Harish-Chandra $W_1$ module.
\end{lemma}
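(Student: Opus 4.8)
The statement to prove is Lemma~\ref{lemma4.1}: for any $k,s\in\Z_+$, the element $\omega_{k,s}^{(3)}$ annihilates every simple Harish-Chandra $W_1$ module. Note the lemma is attributed to \cite[Lemma 3.2]{BF1}, so the task is really to reconstruct the standard argument. The plan is to reduce the claim about $\omega_{k,s}^{(3)}\in U(W_1^+)$ to the corresponding claim about $\Omega_{k,s}^{(3)}\in U(W_1)$, which is cleaner to handle since $W_1$ acts on the full line of basis vectors, and then to verify the annihilation directly on the classification of simple Harish-Chandra $W_1$ modules.

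First I would record the identity $\omega_{k,s}^{(m)}=\Omega_{m+k-1,s-1}^{(m)}$ already stated in the excerpt, so that it suffices to show $\Omega_{k,s}^{(3)}$ annihilates every simple Harish-Chandra $W_1$ module for all relevant $k,s$. Next I would invoke Mathieu's classification: every simple Harish-Chandra $W_1$ module is either a highest/lowest weight module, a module of the intermediate series $V_{a,b}$ (for which the excerpt fixes notation), or a simple subquotient thereof. Since $\Omega_{k,s}^{(m)}$ is built from products $t_1^{k-i}d_1\cdot t_1^{s+i}d_1$, its action on a weight vector is computable term by term. The core computation is therefore to apply $\Omega_{k,s}^{(3)}$ to a single basis vector $v_{a+r}$ of $V_{a,b}$ and check the result vanishes; because every simple Harish-Chandra module is essentially a subquotient of an intermediate series module (its weight spaces being at most one-dimensional in the generic range), establishing the vanishing on all $V_{a,b}$ with one-dimensional weight spaces yields the general statement.

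The heart of the matter is the following observation. Applying $t_1^{p}d_1\cdot t_1^{q}d_1$ to $v_{a+r}$ in $V_{a,b}$ produces a scalar multiple of $v_{a+r+p+q}$, and that scalar is a polynomial in $r$ whose degree in $r$ is at most the number of $d_1$ factors minus the amount killed by the alternating sum. The alternating binomial sum $\sum_{i=0}^{m}(-1)^i\binom{m}{i}(\cdots)$ is precisely an $m$-th finite difference operator in the index $i$; applied to a quantity that is polynomial in $i$ of degree less than $m$ it returns zero. The key step is thus to show that the scalar by which $t_1^{k-i}d_1\cdot t_1^{s+i}d_1$ acts on $v_{a+r}$ is, as a function of $i$, a polynomial of degree at most $2$, so that the third finite difference $\Omega_{k,s}^{(3)}$ annihilates it. Concretely, $t_1^{s+i}d_1\cdot v_{a+r}=(a+r+(s+i)b)v_{a+r+s+i}$ and then $t_1^{k-i}d_1$ sends this to $(a+r+s+i+(k-i)b)$ times the previous coefficient, giving a product of two factors each linear in $i$, hence a quadratic in $i$; a third difference of a quadratic vanishes identically.

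The step I expect to be the main obstacle is the bookkeeping that guarantees degree at most two in $i$ \emph{uniformly}, namely confirming that the shift of the target index $v_{a+r+k+s}$ does not itself introduce additional $i$-dependence (it does not, since $k-i+s+i=k+s$ is independent of $i$), so that all $m+1$ terms of the sum land in the same weight space and the finite-difference cancellation is legitimate term by term rather than only formally. Once this is pinned down, the vanishing of the third difference of a degree-$\le 2$ polynomial finishes the intermediate-series case, and passing to subquotients and to highest/lowest weight modules is routine since annihilation is inherited by submodules and quotients. I would close by remarking that the reduction to $\Omega$ via $\omega_{k,s}^{(m)}=\Omega_{m+k-1,s-1}^{(m)}$ transports this conclusion back to $\omega_{k,s}^{(3)}\in U(W_1^+)$, completing the proof.
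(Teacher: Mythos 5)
The paper gives no proof of Lemma \ref{lemma4.1} at all --- it is quoted from \cite[Lemma 3.2]{BF1} --- so the comparison point is the argument in \cite{BF1}, and your central computation is exactly that argument: on $V_{a,b}$ each product $t_1^{k-i}d_1\cdot t_1^{s+i}d_1$ maps $v_{a+r}$ to a multiple of the single vector $v_{a+r+k+s}$, the scalar $(a+r+(s+i)b)\,(a+r+s+i+(k-i)b)$ has degree at most $2$ in $i$, and the alternating sum defining $\Omega^{(3)}_{k,s}$ is a third finite difference in $i$, hence zero; via $\omega^{(3)}_{k,s}=\Omega^{(3)}_{k+2,s-1}$ this annihilates every $V_{a,b}$ and therefore every subquotient of every $V_{a,b}$. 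That much is correct. The genuine gap is your reduction of the general case to this one. It is false that ``every simple Harish-Chandra module is essentially a subquotient of an intermediate series module'': for $W_1$, Mathieu's classification --- which you yourself invoke in its correct form --- also contains the simple highest weight and lowest weight modules, whose weight multiplicities are unbounded and which are not subquotients of any $V_{a,b}$. For them, ``annihilation is inherited by submodules and quotients'' is a non sequitur: they are not submodules or quotients of anything you have annihilated, so your argument never reaches them.

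This gap cannot be repaired, because the claim is actually false for highest weight modules, so no ``routine'' step could cover them. Write $e_n:=t_1^nd_1$, so $[e_m,e_n]=(n-m)e_{m+n}$, and let $L(h)$ be the simple $W_1$ module generated by a weight vector $v$ with $e_nv=0$ for all $n\geq1$ and $e_0v=hv$; its weight spaces are finite dimensional, so it is a simple Harish-Chandra $W_1$ module, but it is not bounded. Here $\omega^{(3)}_{1,0}=e_3e_{-1}-3e_2e_0+3e_1^2-e_0e_2$, and using $[e_3,e_{-1}]=-4e_2$, $e_2e_{-2}v=-4hv$, $e_3e_{-2}v=0$, $e_1e_{-2}v=-3e_{-1}v$, $e_1e_{-1}v=-2hv$, one finds
\begin{equation*}
\omega^{(3)}_{1,0}\bigl(e_{-2}v\bigr)=16hv+12h(h-2)v+18hv+4h^2v=2h(8h+5)\,v,
\end{equation*}
which is nonzero for every $h\neq0,-5/8$. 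Hence the order-three differentiators do not annihilate highest weight modules; Lemma \ref{lemma4.1} is only true if ``Harish-Chandra'' is read as ``bounded'' (cuspidal), i.e.\ for the subquotients of the modules $V_{a,b}$. That bounded statement is what the computation in \cite{BF1} establishes, it is what your finite-difference argument proves, and it is the only case the paper ever uses (immediately after the lemma it is applied to simple Harish-Chandra $W_1^+$ modules, which by \cite{Ma} are all subquotients of some $V_{a,b}$). So: keep your computation, but replace the dismissal of the highest/lowest weight case as routine by the observation that the hypothesis must be restricted to bounded modules --- with highest and lowest weight modules included, the statement fails.
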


Denote  $\{X,Y\}=XY+YX$.

\begin{lemma}
Let $m, k, s\in \Z_+$ with $m \geq 2$. Then
%\begin{equation}\label{Omega}\begin{split}
%&\sum_{i=0}^{m_1}\sum_{j=0}^{m_2}(-1)^{i+j}(^{m_1}_i)(^{m_2}_j)(\{\omega_{k-m_1+1-i,s-j-1}^{m_1},\omega_{q-m_2+1+i,p+j-1}^{m_2}\}\\
%&-\{\omega_{k-m_1+1-i,q-j-1}^{m_1},\omega_{s-m_2+1+i,p-1+j}^{m_2}\})\\
%=&(q-s)((p-k+2m_2)\omega_{k+p-2m_1+2,s+q-2m_2-1}^{2m_1+2m_2-1}\\
%&-(p-k+2m_1)\omega_{k+p-2m_1+1,s+q-2m_2}^{2m_1+2m_2-1})
%\end{split}\end{equation}

\begin{equation}\label{Omega}\begin{split}
&\sum_{i=0}^{m}\sum_{j=0}^{m}(-1)^{i+j}{m\choose i}{m\choose j}(\{\omega_{k+1+m-i,s+1+m-j}^{(m)},\omega_{i,j}^{(m)}\}
-\{\omega_{k+1+m-i,m-j}^{(m)},\omega_{s+1+i,j}^{(m)}\})\\
=&(k+1)(s+1)\omega_{k,s}^{(4m)}.
\end{split}\end{equation}
\end{lemma}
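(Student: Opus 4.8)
The plan is to translate the whole identity into the language of generating series (formal fields) for $W_1^+$, so that the alternating binomial sums become coefficient extractions and the drop from a quartic to a quadratic expression is governed by locality. I introduce the field $\mathcal{L}(z)=\sum_{a\ge 0}(t^a\partial)\,z^{-a}\in U(W_1^+)[[z,z^{-1}]]$ and observe, by matching the binomial expansion of $(z-w)^m$, that $\omega_{k,s}^{(m)}$ is exactly the coefficient of $z^{-k}w^{-s}$ in $(z-w)^m\mathcal{L}(z)\mathcal{L}(w)$. The only structural input I need about $\mathcal{L}$ is its self-locality: since $[t^a\partial,t^b\partial]=(b-a)t^{a+b-1}\partial$, the commutator $[\mathcal{L}(z),\mathcal{L}(w)]$ is a formal distribution supported on the diagonal involving only $\delta(z,w)$ and $\partial_w\delta(z,w)$, whence $(z-w)^2[\mathcal{L}(z),\mathcal{L}(w)]=0$. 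This is precisely why the hypothesis $m\ge 2$ enters.

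Next I expand both double sums on the left. Writing each $\omega^{(m)}$ as a two-variable coefficient and performing the $i$- and $j$-summations as residues, the factors $(-1)^{i+j}\binom mi\binom mj$ reassemble into two extra difference factors. Both sums then become coefficient extractions against the \emph{same} symmetric kernel $R=(z_1-z_2)^m(z_3-z_4)^m(z_1-z_3)^m(z_2-z_4)^m$ applied to the four-fold product $\mathcal{L}(z_1)\mathcal{L}(z_2)\mathcal{L}(z_3)\mathcal{L}(z_4)$, in the various orders dictated by the anticommutators; the only difference is that the first sum extracts $z_1^{-k-1}z_2^{-s-1}z_3^{0}z_4^{0}$ while the second extracts $z_1^{-k-1}z_2^{0}z_3^{-s-1}z_4^{0}$. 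Since $R$ is invariant under $z_2\leftrightarrow z_3$, relabelling this dummy pair in the second sum aligns the two extractions, so the entire left side becomes the coefficient of $z_1^{-k-1}z_2^{-s-1}z_3^{0}z_4^{0}$ in $R\cdot\Psi$, where $\Psi$ is an explicit signed combination of four orderings of the $\mathcal{L}(z_i)$.

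The heart of the argument is then a locality reduction of $\Psi$. The kernel $R$ carries the factor $(z_i-z_j)^m$ with $m\ge 2$ for exactly the four pairs $(1,2),(3,4),(1,3),(2,4)$, so $R\cdot[\mathcal{L}(z_i),\mathcal{L}(z_j)]=0$ for each; equivalently, inside $R\cdot(-)$ the fields may be transposed freely across these pairs, whereas the two \emph{missing} pairs $(1,4)$ and $(2,3)$ form two disjoint blocks that commute freely with each other but not internally. Sorting the four orderings in $\Psi$ modulo these permitted transpositions collapses it to a product of two genuine commutators, $\Psi\equiv[\mathcal{L}(z_2),\mathcal{L}(z_3)]\,[\mathcal{L}(z_1),\mathcal{L}(z_4)]$, which is only quadratic in the $t^a\partial$; this is the mechanism by which the quartic left-hand side falls to the quadratic right-hand side. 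Finally, each commutator is a single field supported on a diagonal ($z_3=z_2$ and $z_4=z_1$); restricting $R$ there gives $R\to(z_1-z_2)^{4m}$, which produces the power $4m$ and hence the shape of $\omega^{(4m)}_{k,s}$, while the first-derivative ($\partial\delta$) parts of the two commutators differentiate the extraction monomials $z_1^{-k-1}$ and $z_2^{-s-1}$ and supply exactly the scalar $(k+1)(s+1)$.

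I expect the main obstacle to be this last reduction: checking carefully that, modulo the four vanishing-commutator relations, the signed four-term combination $\Psi$ really collapses to $[\mathcal{L}(z_2),\mathcal{L}(z_3)][\mathcal{L}(z_1),\mathcal{L}(z_4)]$ (the bookkeeping of which adjacent transpositions are allowed, encoded by the two-disjoint-edges interaction graph on $\{1,2,3,4\}$), and then the final residue computation in which the $\delta$ and $\partial\delta$ contributions of \emph{both} commutators must combine to give precisely $(k+1)(s+1)\,\omega^{(4m)}_{k,s}$ with no leftover diagonal-derivative corrections. Everything else is formal manipulation of the generating series together with the self-locality of $\mathcal{L}$.
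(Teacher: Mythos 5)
Your translation of $\omega^{(m)}_{k,s}$ into the coefficient of $z^{-k}w^{-s}$ in $(z-w)^m\mathcal{L}(z)\mathcal{L}(w)$, the reassembly of the two double sums against the kernel $R=(z_1-z_2)^m(z_3-z_4)^m(z_1-z_3)^m(z_2-z_4)^m$, the $z_2\leftrightarrow z_3$ relabelling, and the collapse of the signed four-term sum $\Psi$ to a product of two commutators are all correct \emph{granted} the transposition rule $R\cdot[\mathcal{L}(z_i),\mathcal{L}(z_j)]=0$. But the structural claim on which that rule rests is false for the field you defined: for the one-sided series $\mathcal{L}(z)=\sum_{a\ge 0}t^a\partial\,z^{-a}$ one does \emph{not} have $(z-w)^2[\mathcal{L}(z),\mathcal{L}(w)]=0$. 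Delta-function locality requires summation over all integer exponents, whereas $[\mathcal{L}(z),\mathcal{L}(w)]$ here contains only non-positive powers of $z$ and $w$ and so cannot be a combination of $\delta(z,w)$ and $\partial_w\delta(z,w)$ unless it is zero. Concretely, the coefficient of $z^{1}w^{0}$ in $(z-w)^2[\mathcal{L}(z),\mathcal{L}(w)]$ receives $(0-1)t^0\partial=-\partial$ from the $z^2$ term (via $(a,b)=(1,0)$) and $-2(1-0)t^0\partial=-2\partial$ from the $-2zw$ term (via $(a,b)=(0,1)$), while the compensating $w^2$ term would need $a=-1$, which is absent; the total is $-3\partial\neq 0$. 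So ``the fields may be transposed freely inside $R\cdot(-)$'' is not a valid identity of formal distributions, and the heart of your argument, as written, fails.

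The gap is reparable, but the repair is genuinely missing content. Either (i) run the whole argument with the full Witt field $L(z)=\sum_{a\in\Z}t^a\partial\,z^{-a}$ inside $U(W_1)$, where $(z-w)^2[L(z),L(w)]=0$ does hold; your identification of $\omega^{(m)}_{k,s}$ as the coefficient of $z^{-k}w^{-s}$ in $(z-w)^mL(z)L(w)$ remains valid because the binomial window only produces factors $t^{m+k-i}\partial$, $t^{s+i}\partial$ with non-negative exponents, and since both sides of the identity lie in $U(W_1^+)$, which embeds in $U(W_1)$ by PBW, the identity descends to $U(W_1^+)$; or (ii) keep your $\mathcal{L}$ but replace locality by the correct weaker fact that for $m\ge 2$ every coefficient of $z^{-p}w^{-q}$ with $p,q\ge 0$ in $(z-w)^m[\mathcal{L}(z),\mathcal{L}(w)]$ vanishes (this follows from $\sum_i(-1)^i\binom{m}{i}=\sum_i(-1)^i\binom{m}{i}i=0$), and then check, for each transposition error term, that because your extraction monomial $z_1^{-k-1}z_2^{-s-1}z_3^{0}z_4^{0}$ has no positive exponents and the residual kernel factors contribute only non-negative powers, only these vanishing ``principal'' coefficients can contribute. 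With either patch your proof goes through and is a genuinely different, self-contained argument: the paper itself does not prove the identity from scratch but rewrites $\omega^{(m)}_{k,s}=\Omega^{(m)}_{m+k-1,s-1}$ and quotes identity (1.1) of \cite{BF1}. As submitted, however, your proof relies on a false key step.
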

\begin{proof}
The left hand side of (\ref{Omega}) equals
\begin{equation}\label{BFOmega}\sum_{i=0}^{m}\sum_{j=0}^{m}(-1)^{i+j}{m\choose i}{m\choose j}(\{\Omega_{k+2m-i,s+m-j}^{(m)},\Omega_{m-1+i,j-1}^{(m)}\}
-\{\Omega_{k+2m-i,m-1-j}^{(m)},\Omega_{s+m+i,j-1}^{(m)}\}).\end{equation}
By \cite{BF1}(1.1), (\ref{BFOmega}) equals
\begin{eqnarray*}
(s+1)(k+1)\Omega_{k+4m-1,s-1}^{(4m)}=(k+1)(s+1)\omega_{k,s}^{(4m)}.
\end{eqnarray*}
\end{proof}
%$q'=q-m_2-1,k'=k-2m_1+1,p'=p-1,s'=s-1-m_2, $; $q=q'+m_2+1,k=k'+2m_1-1,p=p'+1,s=s'+m_2+1$.$p-k=p'-k'-2m_1+2$.

%$U(W_1^+)$ is a subalgebra of $U(W_1)$. %For any $m\in\Z_+,k\geqslant m-1,s\geqslant -1$, $\Omega_{k,s}^m\in U(W_1^+)$.
In \cite{Ma}, Mathieu gave a complete classification of simple Harish-Chandra modules over $W_1$ or $W_1^+$.
Suppose $V$ is a simple Harish-Chandra $W_1^+$ module. By \cite{Ma}, $V$ is a subquotient of the $W_1^+$ module $V_{a,b}$ for some $a, b\in\bC$. Then $\omega_{k,s}^3\cdot V=0$ for all $k,s\in\Z_+$. Also, we have
\begin{equation}\label{w1supp}
{\rm supp}(V)=a+\Z,\lambda+\Z_+,\lambda+\Z_-\ or\ 0
\end{equation}
for some $\lambda\in a+\Z$.

%The following lemma is essentially the same as the corresponding one in \cite{BF}.

\begin{lemma}\label{lem4.3}
For every $l\in\N$ there is an $m\in\N$ such that for all integers $m'\ge m$ and $k,s\in \Z_+$, $\omega_{k,s}^{m'}$ annihilates every Harish-Chandra $W_1^+$   
 module with a composition series of length no more than $l$.
\end{lemma}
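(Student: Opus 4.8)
The plan is to prove this by an ideal-power argument driven by the identity (\ref{Omega}). Let $J\subseteq U(W_1^+)$ be the intersection of the two-sided annihilators $\mathrm{Ann}(S)$ over all simple Harish-Chandra $W_1^+$ modules $S$; it is a two-sided ideal of $U(W_1^+)$. By Lemma \ref{lemma4.1} together with Mathieu's classification \cite{Ma} (every simple Harish-Chandra $W_1^+$ module is a subquotient of some $V_{a,b}$, whence $\omega_{k,s}^{(3)}\cdot S=0$), we have $\omega_{k,s}^{(3)}\in J$ for all $k,s\in\Z_+$, i.e. $\omega_{k,s}^{(3)}\in J^1$.

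First I would record the standard filtration fact: if $V$ is a Harish-Chandra $W_1^+$ module with a composition series $0=V_0\subset V_1\subset\cdots\subset V_l=V$, then $J^lV=0$. Indeed each factor $V_i/V_{i-1}$ is a subquotient of $V$, hence a weight module with finite-dimensional weight spaces, i.e. a simple Harish-Chandra module, so $J$ annihilates it and therefore $JV_i\subseteq V_{i-1}$; iterating gives $J^lV\subseteq V_0=0$. This reduces the lemma to producing an $m\in\N$ with $\omega_{k,s}^{(m')}\in J^l$ for all integers $m'\ge m$ and all $k,s\in\Z_+$.

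The heart of the argument is the implication: if $\omega_{k,s}^{(p)}\in J^a$ for all $k,s\in\Z_+$ (with $p\ge 2$), then $\omega_{k,s}^{(4p)}\in J^{2a}$ for all $k,s\in\Z_+$. This comes straight from (\ref{Omega}): the right-hand side is $(k+1)(s+1)\omega_{k,s}^{(4p)}$, while the left-hand side is a $\bC$-combination of anticommutators $\{\omega^{(p)},\omega^{(p)}\}$ in which every index of every $\omega^{(p)}$ lies in $\Z_+$, so each factor lies in $J^a$ and each of the two products making up an anticommutator lies in $J^a\cdot J^a=J^{2a}$. Since $(k+1)(s+1)\neq 0$ for $k,s\in\Z_+$, we may divide and conclude $\omega_{k,s}^{(4p)}\in J^{2a}$. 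Starting from $\omega_{k,s}^{(3)}\in J^1$ and iterating $r$ times gives $\omega_{k,s}^{(3\cdot 4^r)}\in J^{2^r}$ for all $k,s\in\Z_+$. Choosing $r=\lceil\log_2 l\rceil$, so that $2^r\ge l$, and setting $m=3\cdot 4^r$, we obtain $\omega_{k,s}^{(m)}\in J^{2^r}\subseteq J^l$ for all $k,s\in\Z_+$.

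Finally I would pass from the single exponent $m$ to all $m'\ge m$ by iterating the recurrence (\ref{O,P}), which yields
$$\omega_{k,s}^{(m')}=\sum_{t=0}^{m'-m}(-1)^t\binom{m'-m}{t}\,\omega_{k+(m'-m)-t,\,s+t}^{(m)}.$$
For $m'\ge m$ this expresses $\omega_{k,s}^{(m')}$ as a $\bC$-combination of elements $\omega_{k',s'}^{(m)}$ with $k',s'\in\Z_+$, each lying in $J^l$; hence $\omega_{k,s}^{(m')}\in J^l$ annihilates every Harish-Chandra $W_1^+$ module of length at most $l$, as desired. The hard part is the middle step: one must check that the index bookkeeping in (\ref{Omega}) keeps every $\omega^{(p)}$ genuinely inside $U(W_1^+)$ (all indices nonnegative, so that each really belongs to $J^a$) and that the doubling $a\mapsto 2a$ of the ideal exponent is exactly what the two-fold products in the anticommutators provide; granting the identity (\ref{Omega}), everything else is routine filtration and binomial bookkeeping.
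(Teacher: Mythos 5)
Your proof is correct, and it rests on the same two pillars as the paper's own proof: the base case $\omega_{k,s}^{(3)}\cdot S=0$ for simple Harish-Chandra $W_1^+$ modules (Lemma \ref{lemma4.1} plus Mathieu's classification), and the identity (\ref{Omega}). Where you genuinely differ is in the induction scheme. The paper inducts on the composition length $l$ of the module $V$ itself: if $\omega^{(m_0)}_{k,s}$ kills every module of length $l-1$, then each $\omega^{(m_0)}_{p,q}$ maps $V$ into $V_{l-1}$ (it kills the simple quotient $V/V_{l-1}$) while each $\omega^{(m_0)}_{k,s}$ kills $V_{l-1}$, so all products of two such elements annihilate $V$, and (\ref{Omega}) then gives $\omega^{(4m_0)}_{k,s}V=0$; thus each application of (\ref{Omega}) advances the length by one, yielding $m=3\cdot 4^{l-1}$. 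You instead work entirely inside $U(W_1^+)$ with powers of the common annihilator ideal $J$, exploiting the full strength of (\ref{Omega}): since \emph{both} factors of every anticommutator lie in $J^a$, one application doubles the exponent, giving $\omega^{(3\cdot 4^r)}_{k,s}\in J^{2^r}$, while the module-theoretic content is isolated in the standard filtration fact $J^lV=0$. This buys a sharper bound, $m=3\cdot 4^{\lceil\log_2 l\rceil}\leq 12l^2$ instead of $3\cdot 4^{l-1}$ (irrelevant for the lemma as stated, but a genuine quantitative improvement), and your explicit binomial expansion $\omega_{k,s}^{(m')}=\sum_{t}(-1)^t\binom{m'-m}{t}\omega_{k+(m'-m)-t,\,s+t}^{(m)}$ spells out the passage to all $m'\geq m$ that the paper compresses into the phrase ``In view of (\ref{O,P})''. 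Two small points are worth making explicit in a final write-up: the inclusion $J^{2^r}\subseteq J^l$ for $2^r\geq l$ uses that $J^l$ is a two-sided ideal (so $J^{2^r}=J^{2^r-l}J^l\subseteq U(W_1^+)\,J^l\subseteq J^l$), and the iteration is legitimate because every superscript produced, starting from $3$, stays $\geq 2$, as required by the hypothesis of (\ref{Omega}).
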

\begin{proof}
In view of (\ref{O,P}), we need only prove that for every $l\in\N$ there is an $m\in\N$ such that for all integers  $k,s\in \Z_+$, $\omega_{k,s}^{(m)}$ annihilates every Harish-Chandra $W_1^+$ module $V$ with a composition series of length $l$.

We will prove this by induction on $l$.  From Lemma \ref{lemma4.1}, we have the statement holds for  $l=1$.

Now suppose that $l>1$ and let
$$0=V_0\subset V_1\subset\dots\subset V_{l-1}\subset V_l=V$$
be a composition series of $V$. By the induction hypothesis, there is an integer $m_0\geqslant 2$ such that $\omega_{k,s}^{(m_0)} V_{l-1}=0$  for all $k,s\in \Z_+$. Then
$$\omega_{k,s}^{(m_0)}\cdot\omega_{p,q}^{(m_0)} V\subset\omega_{k,s}^{(m_0)} V_{l-1}=0,\forall k,s,p,q\in\Z_+$$
By Lemma \ref{Omega}, we get
$\omega_{k,s}^{(m)} V=0$ for $m=4m_0$ and all $k,s\in\Z_+$. \end{proof}

\begin{corollary}\label{cusW1}
For every $r\in\N$ there is an $m\in\N$ such that for all $r,s\in \Z_+$, $\omega_{k,s}^{m}$ annihilates every bounded weight $W_1^+$ module $V$ with  $\dim\ V_{\lambda}\le r$ for all $\lambda\in {\rm supp}(V)$.
\end{corollary}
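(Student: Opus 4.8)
The plan is to reduce Corollary~\ref{cusW1} to Lemma~\ref{lem4.3} by converting the multiplicity bound $\dim V_\lambda\le r$ into a bound on composition length that depends on $r$ only. Since $V$ is spanned by weight vectors, it suffices to show that $\omega_{k,s}^{(m)}$ annihilates each weight vector $v\in V_\lambda$. First I would pass to the cyclic submodule $N=U(W_1^+)v$: it is again a bounded weight module with $\dim N_\mu\le r$ for all $\mu$, and, because $W_1^+$ shifts weights only by integers, $\mathrm{supp}(N)\subseteq\lambda+\Z$ lies in a single coset. It then suffices to find $m=m(r)$, independent of $v,k,s$, with $\omega_{k,s}^{(m)}N=0$.

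The heart of the argument is the claim that such an $N$ has a composition series of length at most $l(r)$ for some $l(r)$ depending only on $r$. Every simple subquotient of $N$ is a simple weight $W_1^+$-module with weight multiplicities $\le r<\infty$, hence a simple Harish-Chandra module; by Mathieu's classification \cite{Ma} it is a subquotient of some $V_{a,b}$, so all of its weight spaces are one-dimensional and, by (\ref{w1supp}), its support is the full coset $\lambda+\Z$, a half-line $\nu+\Z_+$, a half-line $\nu+\Z_-$, or (only when $\lambda\in\Z$) the single point $\{0\}$. Granting that $N$ has finite length --- which follows from boundedness together with this classification --- each simple constituent contributes exactly $1$ to $\dim N_\mu$ at every weight $\mu$ of its support. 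Evaluating $\dim N_\mu\le r$ at a sufficiently large weight bounds by $r$ the number of constituents with full support or with support bounded below; at a sufficiently negative weight it bounds by $r$ those with full support or support bounded above; and at $\mu=0$ it bounds by $r$ the number of trivial constituents. Hence the total length of $N$ is at most $l(r)=3r$.

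Finally I would invoke Lemma~\ref{lem4.3} with $l=l(r)$: it produces $m\in\N$ such that $\omega_{k,s}^{(m)}$ annihilates every Harish-Chandra $W_1^+$-module of composition length $\le l(r)$, for all $k,s\in\Z_+$. In particular $\omega_{k,s}^{(m)}N=0$, so $\omega_{k,s}^{(m)}v=0$; as $v\in V_\lambda$ was arbitrary, $\omega_{k,s}^{(m)}V=0$, with $m$ depending only on $r$. The main obstacle is the length bound of the second paragraph: one must exclude the a priori possibility of infinitely many half-line constituents, and this is exactly where the support trichotomy (\ref{w1supp}) and the one-dimensionality of the weight spaces of simple Harish-Chandra $W_1^+$-modules are indispensable. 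The passage from ``bounded multiplicities'' to ``finite length'' is the delicate point, whereas the counting that then yields $l(r)=3r$ is routine.
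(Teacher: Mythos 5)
Your proposal is correct and follows essentially the same route as the paper: reduce to a submodule supported on a single coset $\lambda+\Z$ (the paper uses $V_{\lambda+\Z}$ rather than a cyclic submodule, an immaterial difference), use (\ref{w1supp}) and Mathieu's classification to bound the composition length by $3r$, and then invoke Lemma \ref{lem4.3} with $l=3r$. The paper's proof is just a terser version of this, asserting the $3r$ bound directly where you spell out the counting of constituents by support type.
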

\begin{proof} For $\lambda\in {\rm supp}(V)$, $V_{\lambda+\Z}=\oplus_{i\in \Z} V_{\lambda+i}$ is a $W_1^+$ submodule.
(\ref{w1supp}) implies that $V_{\lambda+\Z}$ has at most $3r$ simple sub-quotients. From Lemma $\ref{lem4.3}$, there exists an $m\in \N$ such that $\omega_{k,s}^{m}$ annihilates every Harish-Chandra $W_1^+$ module with a composition series of length no more than $3r$. Hence $\omega_{k,s}^{m} V_{\lambda+\Z}=0$. The corollary follows.
\end{proof}

Now we consider the bounded weight $W_n^+$ modules.

For any $j\in\{1,\dots,n\}$, let $W_{(j)}^+={\rm span}\{t_j^k\partial_j|k\in\Z_+\}$. $W_{(j)}^+$ is a subalgebra of $W_n^+$ isomorphic to $W_1^+$. For any $m\in\Z_+,j\in\{1,\dots,n\},\alpha,\beta\in\Z_+^n$, we define
\begin{eqnarray*}
\omega_{\alpha,\beta}^{m,j,l,p}=\sum_{i=0}^m(-1)^i{m\choose i}t^{\alpha+(m-i)e_j}\partial_l\cdot t^{\beta+ie_j}\partial_p.
\end{eqnarray*}

Then

\begin{equation}\omega_{\alpha+e_j,\beta}^{m,j,l,p}-\omega_{\alpha,\beta+e_j}^{m,j,l,p}=\omega_{\alpha,\beta}^{m+1,j,l,p}.\end{equation}
%and write $\Omega_{\alpha,s}^{m,j},\Omega_{k,\beta}^{m,j},\Omega_{k,s}^{m,j}$ instead of $\Omega_{\alpha,se_j}^{m,j},\Omega_{ke_j,\beta}^{m,j},\Omega_{ke_j,se_j}^{m,j}$ respectively.

\begin{lemma}\label{cusWn}
Suppose that $V$ is a bounded weight $W_n^+$ module with $dim\ V_{\lambda}\leqslant r$ for all $\lambda\in {\rm supp}(V)$. Then there is an $m\in\N$ such that  $\omega_{\alpha,\beta}^{m,j,l,p}\cdot V=0$ for all $j,l,p=1,2,\ldots,n;\alpha,\beta\in \Z_+^n$.
\end{lemma}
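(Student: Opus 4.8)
The plan is to reduce everything to the rank-one result already established, namely Corollary \ref{cusW1}, and then to propagate from a very special family of $\omega$-elements to all of them by exploiting the adjoint action on the annihilator.

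First I would fix $j$ and observe that for each $\mu\in\bC^n$ the subspace $C_\mu=\bigoplus_{i\in\Z}V_{\mu+ie_j}$ is invariant under $W_{(j)}^+={\rm span}\{t_j^k\partial_j\mid k\in\Z_+\}\cong W_1^+$, since $t_j^k\partial_j$ has $D$-weight $(k-1)e_j$. Regarded as a weight $W_1^+$-module with $d_j$ as grading element, its multiplicities are exactly $\dim V_{\mu+ie_j}\le r$, so $C_\mu$ is a bounded $W_1^+$-module of bound $r$. Under the identification $W_{(j)}^+\cong W_1^+$ one checks directly that $\omega_{ke_j,se_j}^{m,j,j,j}=\omega_{k,s}^{(m)}$, the rank-one element. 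Hence Corollary \ref{cusW1} supplies an $m$, depending only on $r$ (and thus the same for every $j$), such that $\omega_{ke_j,se_j}^{m,j,j,j}$ kills every $C_\mu$; since every vector of $V$ lies in some $C_\mu$, this gives $\omega_{ke_j,se_j}^{m,j,j,j}\cdot V=0$ for all $k,s\in\Z_+$ and all $j$.

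Next I would use that the left annihilator $\ann(V)=\{u\in U(W_n^+)\mid u\cdot V=0\}$ is stable under $\ad(W_n^+)$: if $u\cdot V=0$ and $x\in W_n^+$ then $[x,u]\cdot v=-u\cdot(x\cdot v)=0$. A direct computation with the Witt bracket yields, for $q\neq j$,
\[
[t^\gamma\partial_q,\,\omega_{\alpha,\beta}^{m,j,l,p}]
=\alpha_q\,\omega_{\gamma+\alpha-e_q,\beta}^{m,j,l,p}
-\gamma_l\,\omega_{\gamma+\alpha-e_l,\beta}^{m,j,q,p}
+\beta_q\,\omega_{\alpha,\gamma+\beta-e_q}^{m,j,l,p}
-\gamma_p\,\omega_{\alpha,\gamma+\beta-e_p}^{m,j,l,q},
\]
in which both the order $m$ and the distinguished index $j$ are unchanged — this is precisely why I avoid $q=j$, where an extra factor $(m-i)$ would alter the order. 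Starting from the diagonal elements $\omega_{ke_j,se_j}^{m,j,j,j}$ and bracketing repeatedly with the $t^\gamma\partial_q$ $(q\neq j)$, all four output terms again lie in $\ann(V)$; suitable choices of $\gamma$ and $q$ switch the derivative slots $l,p$ to arbitrary indices and build up arbitrary $\alpha,\beta$, all while preserving $m$. An induction on $|\alpha|+|\beta|$ together with the number of indices among $\{l,p\}$ that differ from $j$ should then show that every $\omega_{\alpha,\beta}^{m,j,l,p}$ lies in the $\ad$-submodule generated by the diagonal elements, hence in $\ann(V)$.

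The main obstacle is the disentangling in this last step: each bracket produces a sum of (up to) four $\omega$-terms, so to isolate a single target $\omega_{\alpha,\beta}^{m,j,l,p}$ I must form appropriate linear combinations of brackets — varying $\gamma$ and $q$ — and cancel the remaining terms, which are of lower complexity and hence already annihilators by the inductive hypothesis. Verifying that this linear system is always solvable, equivalently that the diagonal elements generate the whole space ${\rm span}\{\omega_{\alpha,\beta}^{m,j,l,p}\}$ under $\ad$, is the combinatorial heart of the proof. A useful auxiliary tool is the recursion $\omega_{\alpha+e_j,\beta}^{m,j,l,p}-\omega_{\alpha,\beta+e_j}^{m,j,l,p}=\omega_{\alpha,\beta}^{m+1,j,l,p}$, which trades a shift in the $e_j$-direction for an increase of the order should one need to enlarge $m$. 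Since there are only finitely many directions $j$, a final maximum over $j$ produces a single $m$ valid for all $j,l,p,\alpha,\beta$.
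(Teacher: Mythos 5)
Your first step—restricting to $W_{(j)}^+\cong W_1^+$ acting on $C_\mu=\bigoplus_{i\in\Z}V_{\mu+ie_j}$ and invoking Corollary \ref{cusW1} to get $\omega_{ke_j,se_j}^{m,j,j,j}\in\ann(V)$—is correct and is exactly how the paper begins. The gap is in the propagation step, and it is not merely that the "combinatorial heart" is left unverified: your decision to bracket \emph{only} with $t^\gamma\partial_q$ for $q\neq j$ makes the required elements $\omega_{\alpha,\beta}^{m',j,j,j}$ with $\alpha$ or $\beta$ having a nonzero component off the direction $j$ provably unreachable, for every order $m'$. Look at your own bracket formula: the only terms on its right-hand side carrying the derivative pattern $(l,p)=(j,j)$ are $\alpha_q\,\omega_{\gamma+\alpha-e_q,\beta}^{m,j,j,j}$ and $\beta_q\,\omega_{\alpha,\gamma+\beta-e_q}^{m,j,j,j}$, and these arise only from inputs that already have pattern $(j,j)$ (a bracket with $t^\gamma\partial_q$ can change $l$ or $p$ into $q\neq j$, never back into $j$). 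Your starting elements have $\alpha,\beta\in\Z_+e_j$, so $\alpha_q=\beta_q=0$ for every $q\neq j$, and these terms vanish. Hence, by induction on bracket depth, every element of the $\operatorname{ad}$-module you generate has the property that its $(j,j)$-pattern components involve only exponents in $\Z_+e_j\times\Z_+e_j$; this property is stable under further brackets with $q\neq j$, under linear combinations, and under the recursion $\omega_{\alpha+e_j,\beta}^{m,j,l,p}-\omega_{\alpha,\beta+e_j}^{m,j,l,p}=\omega_{\alpha,\beta}^{m+1,j,l,p}$, which only shifts exponents along $e_j$. Passing to leading symbols in the symmetric algebra $S(W_n^+)$ (where no monomial $(t^A\partial_j)(t^B\partial_j)$ ever occurs in a depth $\geq 1$ bracket) turns this into a genuine linear-independence obstruction. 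So the diagonal elements do \emph{not} generate all $\omega_{\alpha,\beta}^{m,j,l,p}$ under your restricted adjoint action, contrary to the claim at the center of your argument. A secondary flaw: even for the patterns you can reach, your induction cannot disentangle the coupled pairs $\omega^{m,j,q,j}+\omega^{m,j,j,q}$ produced by the first bracket, because both summands have the same $|\alpha|+|\beta|$ and the same number of non-$j$ indices, so the inductive hypothesis never applies to either of them.

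This is precisely why the paper \emph{does} bracket with $t^\alpha\partial_j$ for general $\alpha\in\Z_+^n$—the case you excluded. The $i$-dependent factors $(k+m-i-\alpha_j)$ you feared do appear, but they are eliminated by finite differencing: the combinations $f_j^{(m)}(\alpha,k,s+1)-f_j^{(m)}(\alpha+e_j,k,s)$ and then $g_j^{(m)}$ raise the order $m\mapsto m+2\mapsto m+3$ and yield $\omega_{\alpha,\beta}^{m+3,j,j,j}\in\ann(V)$ for arbitrary $\alpha,\beta\in\Z_+^n$ first. Only then are the derivative indices changed, and the coupled pairs $\gamma_j(\omega^{m+3,j,l,j}+\omega^{m+3,j,j,l})$ are separated not by a complexity induction but by the combination $(\gamma_j+1)f^{(m)}(\alpha,\beta+e_j,\gamma,l,j)-\gamma_j f^{(m)}(\alpha,\beta,\gamma+e_j,l,j)$, which invokes the order-raising recursion once more and lands at order $m+4$. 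Since the lemma only asserts the existence of \emph{some} uniform $m$, these finitely many increments are harmless; the "preserve $m$" constraint you imposed on yourself is both unattainable by your method and unnecessary for the statement.
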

\begin{proof}
By Corollary \ref{cusW1}, there is an $m\in\N$ such that  \begin{equation}\Omega_{ke_j,se_j}^{m,j,j,j}\in \ann(V),\forall j\in\{1,2,\ldots, n\},k,s\in\Z_+\end{equation} where $\ann(V)=\{x\in U(W_n^+)| xV=0\}$. For any $\alpha\in\Z_+^n,k,s\in\Z_+$, we have
\begin{equation}\begin{split}&f_j^m(\alpha,k,s):= [t^\alpha\partial_j,\Omega_{ke_j,se_j}^{m,j,j,j}]\\
&=[t^\alpha\partial_j,\sum_{i=0}^m(-1)^i{m\choose i}t^{(k+m-i)e_j}\partial_j\cdot t^{(s+i)e_j}\partial_j]\\
&=\sum_{i=0}^m(-1)^i{m\choose i}((k+m-i-\alpha_j)t^{\alpha+(k+m-i-1)e_j}\partial_j\cdot t^{(s+i)e_j}\partial_j\\
& +(s+i-\alpha_j)t^{(k+m-i)e_j}\cdot t^{\alpha+(s+i-1)e_j}\partial_j)\in \ann(V).\end{split} \end{equation}
Then
\begin{eqnarray*}
& &f_j^{(m)}(\alpha,k,s+1)-f_j^m(\alpha+e_j,k,s)\\
&=&\sum_{i=0}^m(-1)^i{m\choose i}(k+m-i-\alpha_j)t^{\alpha+(m+k-1-i)e_j}\partial_j\cdot t^{(s+1+i)e_j}\partial_j\\
& &-\sum_{i=0}^m(-1)^i{m\choose i}(k+m-i-\alpha_j-1)t^{\alpha+(m+k-i)e_j}\partial_j\cdot t^{(s+i)e_j}\partial_j\\
& &+2\sum_{i=0}^m(-1)^i{m\choose i}t^{(k+m-i)e_j}\partial_j\cdot t^{\alpha+(s+i)e_j}\partial_j\in \ann(V).
\end{eqnarray*}
Now we have
\begin{eqnarray*}
g_j^{(m)}(\alpha,k,s)&:=&(f_j^{(m)}(\alpha,k+1,s+1)-f_j^{(m)}(\alpha+e_j,k+1,s))\\
& &-(f_j^{(m)}(\alpha+e_j,k,s+1)-f_j^{(m)}(\alpha+2e_j,k,s))\\
&=&2\sum_{i=0}^m(-1)^i{m\choose i}t^{\alpha+(m+k-i)e_j}\partial_j\cdot t^{(s+1+i)e_j}\partial_j\\
& &-2\sum_{i=0}^m(-1)^i{m\choose i}t^{\alpha+(m+k+1-i)e_j}\partial_j\cdot t^{(s+i)e_j}\partial_j\\
& &+2\sum_{i=0}^m(-1)^i{m\choose i}t^{(m+k+1-i)e_j}\partial_j\cdot t^{\alpha+(s+i)e_j}\partial_j\\
& &-2\sum_{i=0}^m(-1)^i{m\choose i}t^{(m+k-i)e_j}\partial_j\cdot t^{\alpha+(s+i+1)e_j}\partial_j\\
&=& 2\omega_{\alpha+ke_j,(s+1)e_j}^{m,j,j,j}-2\omega_{\alpha+(k+1)e_j,se_j}^{m,j,j,j}
+2\omega_{(k+1)e_j,\alpha+se_j}^{m,j,j,j}-2\omega_{ke_j,\alpha+(s+1)e_j}^{m,j,j,j}\\
&=&2\omega_{ke_j,\alpha+se_j}^{m+1,j,j,j}-2\omega_{\alpha+ke_j,se_j}^{m+1,j,j,j}
\in \ann(V).
\end{eqnarray*}
Therefore,
\begin{eqnarray*}
& &\frac{1}{2}g_j^{(m)}(\alpha,k,s+1)-\frac{1}{2}g_j^{(m)}(\alpha+e_j,k,s)\\
&=&\omega_{\alpha+(k+1)e_j,se_j}^{m+1,j,j,j}-\omega_{\alpha+ke_j,(s+1)e_j}^{m+1,j,j,j}\\
&=&\omega_{\alpha+ke_j,se_j}^{m+2,j,j,j}\in \ann(V).
\end{eqnarray*}

For any $\alpha,\beta\in\Z_+^n,s\in\Z_+,j\in\{1,\dots,n\}$, we compute out

\begin{eqnarray*}
& &f_j^m(\beta,\alpha,s):=
[t^\beta\partial_j,\omega_{\alpha,se_j}^{m+2,j,j,j}]\\
&=&\sum_{i=0}^{m+2}(-1)^i{m+2\choose i}[t^\beta\partial_j,t^{\alpha+(m+2-i)e_j}\partial_j\cdot t^{(s+i)e_j}\partial_j]\\
&=&\sum_{i=0}^{m+2}(-1)^i{m+2\choose i}((\alpha_j+m+2-\beta_j-i)t^{\alpha+\beta+(m+1-i)e_j}\partial_j\cdot t^{(s+i)e_j}\partial_j\\
& &+(s-\beta_j+i)t^{\alpha+(m+2-i)e_j}\partial_j\cdot t^{\beta+(s+i-1)e_j}\partial_j)\in \ann(V).\end{eqnarray*}
Thus
\begin{eqnarray*}
& &f_j^{(m)}(\beta,\alpha,s+1)-f_j^{(m)}(\beta+e_j,\alpha,s)\\
&=&\sum_{i=0}^{m+2}(-1)^i{m+2\choose i}(\alpha_j-\beta_j+m+2-i)t^{\alpha+\beta+(m+1-i)e_j}\partial_j\cdot t^{(s+i+1)e_j}\partial_j\\
& &-\sum_{i=0}^{m+2}(-1)^i{m+2\choose i}(\alpha_j-\beta_j+m+1-i)t^{\alpha+\beta+(m+2-i)e_j}\partial_j\cdot t^{(s+i)e_j}\partial_j\\
& &+2\sum_{i=0}^{m+2}(-1)^i{m+2\choose i}t^{\alpha+(m+2-i)e_j}\partial_j\cdot t^{\beta+(s+i)e_j}\partial_j\in \ann(V).
\end{eqnarray*}
 We have
\begin{eqnarray*}
& &(f_j^{(m)}(\beta,\alpha+e_j,s+1)-f_j^{(m)}(\beta+e_j,\alpha+e_j,s))\\
& &-(f_j^{(m)}(\beta+e_j,\alpha,s+1)-f_j^{(m)}(\beta+2e_j,\alpha,s))\\
&=&2\sum_{i=0}^{m+2}(-1)^i{m+2\choose i}t^{\alpha+\beta+(m+2-i)e_j}\partial_j\cdot t^{(s+i+1)e_j}\partial_j\\
& &-2\sum_{i=0}^{m+2}(-1)^i{m+2\choose i}t^{\alpha+\beta+(m+3-i)e_j}\partial_j\cdot t^{(s+i)e_j}\partial_j\\
& &+2\sum_{i=0}^{m+2}(-1)^i{m+2\choose i}t^{\alpha+(m+3-i)e_j}\partial_j\cdot t^{\beta+(s+i)e_j}\partial_j\\
& &-2\sum_{i=0}^{m+2}(-1)^i{m+2\choose i}t^{\alpha+(m+2-i)e_j}\partial_j\cdot t^{\beta+(s+i+1)e_j}\partial_j\\
&=&2\omega_{\alpha+\beta,(s+1)e_j}^{(m+2),j,j,j}-2\omega_{\alpha+\beta+e_j,se_j}^{(m+2),j,j,j}+2\omega_{\alpha+e_j,\beta+se_j}^{(m+2),j,j,j}-2\omega_{\alpha,\beta+(s+1)e_j}^{(m+2),j,j,j}\\
&=&2\omega_{\alpha,\beta+se_j}^{m+3,j,j,j}-2\omega_{\alpha+\beta,se_j}^{m+3,j,j,j}\in \ann(V).
\end{eqnarray*}
Thus \begin{equation}\label{jjj}\omega_{\alpha,\beta}^{m+3,j,j,j}\in \ann(V),\forall \alpha,\beta\in\Z_+^n.\end{equation}

Let $\gamma\in\Z_+^n$ and $l\in \{1,2,\ldots,n\}\setminus \{ j\}$. We have
\begin{eqnarray*}
& &[t^\gamma\partial_l,\omega_{\alpha,\beta}^{m+3,j,j,j}]\\
&=&[t^\gamma\partial_l,\sum_{i=0}^{m+3}(-1)^i{m+3\choose i}t^{\alpha+(m+3-i)e_j}\partial_j\cdot t^{\beta+ie_j}\partial_j]\\
&=&\sum_{i=0}^{m+3}(-1)^i{m+3\choose i}\Big(\alpha_l t^{\alpha+\gamma-e_l+(m+3-i)e_j)}\partial_j\cdot t^{\beta+ie_j}\partial_j-\gamma_jt^{\alpha+\gamma+(m+2-i)e_j}\partial_l\cdot t^{\beta+ie_j}\partial_j\\
& &+\beta_lt^{\alpha+(m+3-i)e_j}\partial_j\cdot t^{\beta+\gamma-e_l+ie_j}\partial_j
-\gamma_jt^{\alpha+(m+3-i)e_j}\partial_j\cdot t^{\beta+\gamma+(i-1)e_j}\partial_l\Big)\\
&= &\alpha_l \omega_{\alpha+\gamma-e_l,\beta}^{m+3,j,j,j}+\beta_l \omega_{\alpha,\beta+\gamma-e_l}^{m+3,j,j,j}-\gamma_j(\omega_{\alpha+\gamma-e_j,\beta}^{m+3,j,l,j}+\omega_{\alpha,\beta+\gamma-e_j}^{m+3,j,j,l})\in \ann(V).
\end{eqnarray*}
We have\begin{equation*} f^{(m)}(\alpha,\beta,\gamma,l,j):=
\gamma_j(\omega_{\alpha+\gamma-e_j,\beta}^{m+3,j,l,j}+\omega_{\alpha,\beta+\gamma-e_j}^{m+3,j,j,l})\in \ann(V).
\end{equation*}
Therefore

\begin{eqnarray*}
& &(\gamma_j+1)f^{(m)}(\alpha,\beta+e_j,\gamma,l,j)-\gamma_jf^{(m)}(\alpha,\beta,\gamma+e_j,l,j)\\
&=&\gamma_j(\gamma_j+1)(\omega_{\alpha+\gamma-e_j,\beta+e_j}^{m+3,j,l,j}-\omega_{\alpha+\gamma,\beta}^{m+3,j,l,j})
=-\gamma_j(\gamma_j+1)\omega_{\alpha+\gamma-e_j,\beta}^{m+4,j,l,j}\in \ann(V).
\end{eqnarray*}

Taking $\gamma=e_j$, we get \begin{equation}\label{jlj}
\omega_{\alpha,\beta}^{m+4,j,l,j}\in \ann(V).
\end{equation}

Similarly we have \begin{equation}\label{jjl}
\omega_{\alpha,\beta}^{m+4,j,j,l}\in \ann(V).
\end{equation}
 For any $p\in \{1,2,\ldots,n\}\setminus \{j\},\alpha,\beta,\gamma\in\Z_+^n$, we have

\begin{eqnarray*}
& &[t^\gamma\partial_p,\omega_{\alpha,\beta}^{m+4,j,l,j}]\\
&=&[t^\gamma\partial_p,\sum_{i=0}^{m+4}(-1)^i{m+4\choose i}t^{\alpha+(m+4-i)e_j}\partial_l\cdot t^{\beta+ie_j}\partial_j]\\
&=&\sum_{i=0}^{m+4}(-1)^i{m+4\choose i}\Big(\alpha_p t^{\alpha+\gamma-e_p+(m+4-i)e_j}\partial_l\cdot t^{\beta+ie_j}\partial_j-\gamma_lt^{\alpha+\gamma-e_l+(m+4-i)e_j}\partial_p\cdot t^{\beta+ie_j}\partial_j\\
& &+\beta_p t^{\alpha+(m+4-i)e_j}\partial_l\cdot t^{\beta+\gamma-e_p+ie_j}\partial_j-\gamma_j t^{\alpha+(m+4-i)e_j}\partial_l\cdot t^{\beta+\gamma+(i-1)e_j}\partial_p\Big)\\
&=& \alpha_p\omega_{\alpha+\gamma-e_p,\beta}^{m+4,j,l,j}-\gamma_l\omega_{\alpha+\gamma-e_l,\beta}^{m+4,j,p,j}+\beta_p\omega_{\alpha,\beta+\gamma-e_p}^{m+4,j,l,j}-\gamma_j\omega_{\alpha,\beta+\gamma-e_j}^{m+4,j,l,p}\in \ann(V).\end{eqnarray*}
Combining this with (\ref{jlj}), we have  $\gamma_j\omega_{\alpha,\beta+\gamma-e_j}^{m+4,j,l,p}\in \ann(V)$. Taking $\gamma=e_j$, we obtain $\omega_{\alpha,\beta}^{m+4,j,l,p}\in \ann(V)$. This together with (\ref{jjj}),(\ref{jlj}) and (\ref{jjl}) gives
\begin{equation}\omega_{\alpha,\beta}^{m+4,j,l,p}\in \ann(V),\forall \alpha,\beta\in \Z_+^n;j,l,p\in \{1,2,\ldots,n\}.\end{equation}

The lemma follows after replacing $m$ with $m+4$.

 \end{proof}

Let $V$ be a weight $W_n^+$ module. Since $W_n^+$ is itself a weight $W_n^+$ module, $W_n^+\otimes V$ is a weight $W_n^+$ module with
$$(W_n^+\otimes V)_\lambda=\oplus_{\alpha\in \Z^n}(W_n^+)_{\alpha}\otimes V_{\lambda-\alpha},\forall \lambda\in \bC^n.$$
Define the action of $A_n^+$ on $W_n^+\otimes V$ by
$$t^\alpha\cdot(t^\beta\partial_i\otimes v)=t^{\alpha+\beta}\partial_i\otimes v,\forall \alpha,\beta\in\Z_+^n,i\in\{1,\dots,n\},v\in V.$$
It's easy to verify that $W_n^+\otimes V$ now has an $AW_n^+$ module structure.
Define a linear map $\theta:W_n^+\otimes V\rightarrow V$ by
$$\theta(t^\alpha\partial_i\otimes v)=t^\alpha\partial_i\cdot v,\forall \alpha\in\Z_n^+,i\in\{1,\dots,n\},v\in V.$$
This is a $W_n^+$ module homomorphism with image $W_n^+\cdot V$. Let $$K(V)=\{x\in \Ker \theta|A_n^+\cdot x\subset \Ker \theta\}.$$ Then $K(V)$ is an $AW_n^+$-submodule of $W_n^+\otimes V$. Let $$\hat V=(W_n^+\otimes V)/K(V).$$ The $AW_n^+$ module $\hat V$ is called the $A$-cover of $V$ if $W_n^+V=V$. Naturally, $\theta$ induces a $W_n^+$ module homomorphism $\hat\theta:\hat V\rightarrow V$.

\begin{theorem}\label{covercus}
Suppose that $V$ is a simple bounded weight $W_n^+$ module. Then $\hat V$ is bounded.
\end{theorem}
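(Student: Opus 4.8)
The plan is to exploit the degree-two annihilating operators produced by Lemma \ref{cusWn}, turning each of them into a single \emph{linear} relation valid in the cover. First I would dispose of the degenerate case: since $V$ is simple, $W_n^+\cdot V$ is a $W_n^+$-submodule, so it is either $0$ (whence $V$ is the one-dimensional trivial module, $\theta=0$, $K(V)=W_n^+\otimes V$ and $\hat V=0$ is bounded) or all of $V$. So I may assume $W_n^+\cdot V=V$. Fix $r\in\N$ with $\dim V_\lambda\le r$ for all $\lambda$, and use Lemma \ref{cusWn} to choose $m\in\N$ with $\omega_{\alpha,\beta}^{m,j,l,p}\in\ann(V)$ for all $j,l,p\in\{1,\dots,n\}$ and all $\alpha,\beta\in\Z_+^n$.

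The crucial step is the following conversion. For $\alpha,\beta\in\Z_+^n$, indices $j,l,p$, and $v\in V$, set
$$y=\sum_{i=0}^m(-1)^i\binom{m}{i}\,t^{\alpha+(m-i)e_j}\partial_l\otimes\bigl(t^{\beta+ie_j}\partial_p\cdot v\bigr)\in W_n^+\otimes V.$$
Using the $A_n^+$-action $t^\gamma\cdot(t^\delta\partial_i\otimes u)=t^{\gamma+\delta}\partial_i\otimes u$ and the definition of $\theta$, a direct computation gives $\theta(t^\gamma\cdot y)=\omega_{\gamma+\alpha,\beta}^{m,j,l,p}\cdot v=0$ for every $\gamma\in\Z_+^n$. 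Hence $A_n^+\cdot y\subset\Ker\theta$, so $y\in K(V)$ and $[y]=0$ in $\hat V$. Isolating the $i=0$ summand yields, modulo $K(V)$,
$$t^{\alpha+me_j}\partial_l\otimes(t^\beta\partial_p v)\equiv\sum_{i=1}^m(-1)^{i+1}\binom{m}{i}\,t^{\alpha+(m-i)e_j}\partial_l\otimes(t^{\beta+ie_j}\partial_p v).$$
This is a lowering relation: it rewrites a generator whose first tensor factor has $t_j$-exponent $\ge m$ in terms of generators whose first factors have strictly smaller $t_j$-exponent, while the second factors remain of the admissible form $t^{\beta'}\partial_{p}v$ and the first-factor exponents in the directions $j'\neq j$ are untouched.

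Next I would run the reduction. Because $W_n^+V=V$, every weight vector $w\in V$ is a finite sum $\sum_s t^{\beta_s}\partial_{p_s}v_s$, so every weight vector of $\hat V$ is a linear combination of classes $[t^\gamma\partial_l\otimes t^{\beta_s}\partial_{p_s}v_s]$. Applying the lowering relation repeatedly, one coordinate direction at a time, strictly decreases the maximal $t_j$-exponent of the first factor at each application and preserves the admissible form of the second factor; since first-factor exponents in the other directions are unaffected, after treating all $j$ I reduce every generator to one with $0\le\gamma_j\le m-1$ for every $j$. Thus $\hat V_\lambda$ is spanned by the classes $[t^\gamma\partial_l\otimes w]$ with $\gamma\in\{0,\dots,m-1\}^n$, $l\in\{1,\dots,n\}$, and $w\in V_{\lambda-\gamma+e_l}$.

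Finally, the count gives the uniform bound: there are at most $nm^n$ pairs $(\gamma,l)$, and for each the admissible $w$ lies in a weight space of dimension $\le r$, whence $\dim\hat V_\lambda\le nm^n r$ independently of $\lambda$, so $\hat V$ is bounded. I expect the main obstacle to be the crucial step, namely recognizing that the quadratic annihilators of Lemma \ref{cusWn} are exactly what produces elements of $K(V)$ and verifying the identity $\theta(t^\gamma\cdot y)=\omega_{\gamma+\alpha,\beta}^{m,j,l,p}\cdot v$; once this linear relation is available, the iterated reduction and the dimension count are routine bookkeeping.
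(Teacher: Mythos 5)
Your proof is correct and follows essentially the same route as the paper: both use Lemma \ref{cusWn} together with the definition of $K(V)$ to show that the elements $\sum_{i=0}^m(-1)^i\binom{m}{i}\,t^{\alpha+(m-i)e_j}\partial_l\otimes(t^{\beta+ie_j}\partial_p v)$ lie in $K(V)$, then reduce every generator $t^\gamma\partial_l\otimes t^\beta\partial_p v$ (using $W_n^+V=V$) modulo $K(V)$ to ones with first-factor exponents bounded by $m$, yielding a uniform bound on the weight spaces of $\hat V$. The only cosmetic difference is that you organize the reduction coordinate-by-coordinate while the paper inducts on $|\alpha|$; both are valid bookkeeping for the same argument.
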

\begin{proof} It is obvious if $V$ is trivial. Now we assume that $V$ is nontrivial. Then $W_n^+V=V$ and ${\rm supp}(V)\subset\lambda+\Z^n$ for some $\lambda\in {\rm supp}(V)$.
Let $m$ be as in Lemma \ref{cusWn}. Let $S={\rm span}\{t^\alpha\partial_i\big|0\leqslant\alpha_j\leqslant m,i,j=1,\dots,n\}$ be a subspace of $W_n^+$. Then $\dim S=n^2(m+1)$. So $S\otimes V$ is a $H$-submodule of $W_n^+\otimes V$ with
$$\dim\ (S\otimes V)_{\mu}\leqslant n^2(m+1)r,\forall \mu \in \lambda+\Z^n.$$
We will prove that $W_n^+\otimes V=S\otimes V+K(V)$, which implies that $\hat V=(W_n^+\otimes V)/K(M)$ bounded.
Recall that $V=W_n^+V$. All we need to do is to prove that  $t^\alpha\partial_l\otimes t^{\beta}\partial_p v \in S\otimes V+K(M) $ for  all $\alpha,\beta \in\Z_+^n,l,p\in\{1,\dots,n\},  v\in V$. For any $\alpha\in\Z_+^n$, define $|\alpha|=\alpha_1+\dots+\alpha_n$. We will prove this by induction on $|\alpha|$. The statement is clearly true if $|\alpha|\leqslant m$ or $\alpha_j\leqslant m$ for all $j$. Now suppose that $\alpha_j>m$ for some $j\in\{1,\dots,n\}$.  From Lemma 4.5 and the definition of $K(V)$, we know that $$\sum_{i=0}^m(-1)^i{m\choose i}t^{\alpha-ie_j}\partial_l\otimes t^{\beta+ie_j}\partial_p\cdot v\in K(V).$$ Since $|\alpha-ie_j|<|\alpha|$ for all $i\in\{1,\dots,m\}$,  by the induction hypothesis we have $t^{\alpha-ie_j}\partial_l\otimes t^{\beta+ie_j}\partial_p\cdot v\in S\otimes V+K(M)$. Therefore, $t^\alpha\partial_l\otimes t^{\beta}\partial_p v =\sum_{i=0}^m(-1)^i{m\choose i}t^{\alpha-ie_j}\partial_l\otimes t^{\beta+ie_j}\partial_p\cdot v-\sum_{i=1}^m(-1)^i{m\choose i}t^{\alpha-ie_j}\partial_l\otimes t^{\beta+ie_j}\partial_p\cdot v\in S\otimes V+K(V)$ as desired.\end{proof}

\begin{lemma}\label{AWlength}
Let $V$ be a bounded weight $AW_n^+$ module with ${\rm supp}(V)\subset\lambda+\Z^n$ and $\dim V_\mu \leqslant N,\forall \mu\in{\rm supp}(V)$. Then $V$ must have a composition series of length $\le N\cdot 2^n$.
\end{lemma}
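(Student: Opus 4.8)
The plan is to bound, by a single constant, the length of \emph{every} strictly increasing chain of $AW_n^+$-submodules
$$0=V_0\subsetneq V_1\subsetneq\cdots\subsetneq V_\ell=V ;$$
a uniform bound on all such chains shows in particular that $V$ has finite length and that any composition series has length $\le N\cdot 2^n$. The idea is to control $\ell$ by reading off weight-space dimensions ``at infinity'' along the $2^n$ coordinate directions $\epsilon\in\{\pm1\}^n$, using that the classification of Theorem \ref{cusAW} forces each factor to be visibly nonzero far out in one such direction.

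First I would analyze a single factor $W_k:=V_k/V_{k-1}\neq 0$. Choosing a nonzero weight vector $\bar v\in W_k$ and applying Zorn's lemma to the submodules of the cyclic module $U:=\bar U\bar v$ that do not contain $\bar v$, I obtain a maximal such submodule $U'$; then $S_k:=U/U'$ is \emph{simple}, is bounded and has support in $\lambda+\Z^n$ (being a section of $V$), and satisfies $\dim (W_k)_\nu\ge\dim U_\nu\ge\dim (S_k)_\nu$ for all $\nu$. By Theorem \ref{cusAW}, $S_k\cong F(P_k,M_k)$ with $M_k$ a finite-dimensional simple $\gl_n$-module and $P_k$ a simple weight $K_n^+$-module. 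By Lemma \ref{lem2.1} the weight spaces of $P_k$ are at most one-dimensional and ${\rm supp}(P_k)=X_1\times\cdots\times X_n$ with each $X_i$ a translate of $\Z$, $\Z_+$ or $-\N$. Setting $\epsilon_{k,i}=+1$ when $X_i$ is unbounded above and $\epsilon_{k,i}=-1$ otherwise, a direct check using \eqref{weispa} shows that for $\nu=\lambda+R\epsilon_k$ with $R$ large one has $\nu-\alpha\in{\rm supp}(P_k)$ for every $\alpha\in{\rm supp}(M_k)$, whence $\dim (S_k)_\nu=\dim M_k\ge 1$.

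Next I would carry out the counting. For a fixed direction $\epsilon$ put $J_\epsilon=\{k:\epsilon_k=\epsilon\}$, a finite set. Taking $\nu=\lambda+R\epsilon$ sends every coordinate $\nu_i=\lambda_i+R\epsilon_i$ simultaneously into the corner determined by $\epsilon$, so for $R$ exceeding the finitely many thresholds above one gets $\dim (S_k)_\nu=\dim M_k$ for all $k\in J_\epsilon$ at once. Exactness of the weight-space functor gives $\dim V_\nu=\sum_{k=1}^{\ell}\dim (W_k)_\nu$, and therefore
$$N\ \ge\ \dim V_\nu\ \ge\ \sum_{k\in J_\epsilon}\dim (W_k)_\nu\ \ge\ \sum_{k\in J_\epsilon}\dim (S_k)_\nu\ =\ \sum_{k\in J_\epsilon}\dim M_k\ \ge\ |J_\epsilon| .$$
Summing over the $2^n$ directions yields $\ell=\sum_{\epsilon}|J_\epsilon|\le N\cdot 2^n$, which is the asserted bound.

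The coset and translate bookkeeping needed to verify $\dim (S_k)_\nu=\dim M_k$, and the Zorn's lemma extraction itself, are routine. The genuinely load-bearing step is realizing a simple section of each factor as a \emph{quotient of a submodule}: this is exactly what lets me invoke Theorem \ref{cusAW} (which applies to simple modules) while still producing an honest lower bound $\dim (W_k)_\nu\ge\dim M_k\ge 1$ at infinity. The accompanying key observation, which makes the $2^n$ explicit, is that the ray $\nu=\lambda+R\epsilon$ drives all $n$ coordinates into the $\epsilon$-corner at the same time, so that a single evaluation point detects every factor assigned to $\epsilon$ simultaneously.
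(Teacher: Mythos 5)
Your proof is correct and follows essentially the same route as the paper: extract from each factor of a chain a simple sub-quotient, identify it as $F(P_k,M_k)$ via Theorem \ref{cusAW}, use Lemma \ref{lem2.1} to see that its support fills out one of the $2^n$ coordinate ``corners,'' and then bound the number of factors per corner by $N$ using a weight deep inside that corner. The only cosmetic differences are that you argue directly along rays $\lambda+R\epsilon$ (rather than by contradiction with a coordinate-wise max/min point) and that you spell out the Zorn's-lemma extraction of a simple sub-quotient, a step the paper leaves implicit.
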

\begin{proof}
Since $V$ is bounded, any simple $AW_n^+$ sub-quotient of $V$ must be of the form $F(P,M)$ by Theorem \ref{cusAW}, where $P$ is a simple weight $K_n^+$ module and $M$ is a simple weight $\gl_n$ module. By Lemma \ref{lem2.1}, $P$ has a weight $\lambda'$ such that $\lambda'+X\subset {\rm supp}(P)$, where $X=X_1\times\dots\times X_n,X_1,\dots,X_n\in\{\Z_+,-\Z_+\}$. Let $\lambda''$ be a weight of $M$. Then $\lambda'+\lambda''+X\subset {\rm supp}(F(P,M))$. Also, since ${\rm supp}(V)\subset\lambda+\Z^n$, there is a $\beta\in\Z^n$ such that $\lambda'+\lambda''=\lambda+\beta$.

Suppose that $V$ does not have a composition series of length $\le N\cdot 2^n$. Then $V$ has  $AW_n^+$ submodules
$$0\subsetneq V_1\subsetneq\dots\subsetneq V_{N\cdot 2^n+1}.$$ From the discussed above, for any $i=1,2,\ldots,N\cdot 2^n+1$, there exists $\beta^{(i)}\in\Z^n$ and $X^{(i)}=X_{i1}\times\dots\times X_{in}$ with $X_{i1},\dots,X_{in}\in\{\Z_+,-\Z_+\}$, such that $\lambda+\beta^{(i)}+X^{(i)}\subset {\rm supp}(V_i/V_{i-1})$. Then there is an $i_0\in\{1,\dots,N\cdot 2^n+1\}$ such that the set $I=\{i\in\{1,\dots,N\cdot 2^n+1\}\ |\ X^{(i)}=X^{(i_0)}\}$ has at least $N+1$ elements. Without loss of generality, we assume that $X^{(i_0)}=\Z_+^j\times(-\Z_+)^{n-j}$ for some $j\in\{0,\dots,n\}$. Let $\beta=(\beta_1,\dots,\beta_n)\in\Z^n$ with $\beta_k={\rm max}\{\beta^{(i)}_k|i\in I\}$ for all $k\in\{1,\dots,j\}$ and $\beta_k={\rm min}\{\beta^{(i)}_k|i\in I\}$ for all $k\in\{j+1,\dots,n\}$. Then $\lambda+\beta\in{\rm supp}(V_i/V_{i-1})$ for all $i\in I$. It follows that $\dim V_{\lambda+\beta}\geqslant N+1$, which is a contradiction.
\end{proof}

\begin{lemma}\label{quot}
Let $V$ be a simple bounded weight $W_n^+$ module, then $V$ is a simple quotient of $F(P,M)$, where $P$ is a simple weight $K_n^+$ module and $M$ is a finite dimensional simple $\gl_n$ module.
\end{lemma}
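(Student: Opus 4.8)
The plan is to use the $A$-cover machinery developed immediately before this lemma, reducing the problem about $W_n^+$ modules to the already-classified $AW_n^+$ modules. Given a simple bounded weight $W_n^+$ module $V$, I first dispose of the trivial case. If $V$ is nontrivial then $W_n^+ V = V$, so the $A$-cover $\hat V$ is defined and the canonical map $\hat\theta:\hat V \to V$ is a surjective $W_n^+$ module homomorphism. By Theorem \ref{covercus}, $\hat V$ is a bounded weight $AW_n^+$ module. The key point is that $V$, being a quotient of $\hat V$ as a $W_n^+$ module, will be seen to arise as a quotient of one of the simple $AW_n^+$ subquotients of $\hat V$, each of which is a tensor module $F(P,M)$ by Theorem \ref{cusAW}.

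The central step is to pass from the $AW_n^+$ module $\hat V$ to a suitable simple subquotient and transfer the surjection $\hat\theta$ down to it. First I would apply Lemma \ref{AWlength}: since $\hat V$ is bounded with support in a single coset $\lambda + \Z^n$, it has a finite composition series (of length at most $N\cdot 2^n$) as an $AW_n^+$ module. Now I consider this composition series and the map $\hat\theta:\hat V \twoheadrightarrow V$. Because $V$ is simple as a $W_n^+$ module and $\hat\theta$ is nonzero, there must be some consecutive pair $\hat V_{i-1} \subset \hat V_i$ in the series such that $\hat\theta$ does not vanish on $\hat V_i$ but does vanish on $\hat V_{i-1}$; restricting and passing to the quotient yields a nonzero $W_n^+$ module homomorphism from the simple $AW_n^+$ subquotient $\hat V_i/\hat V_{i-1}$ to $V$. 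By simplicity of $V$ this map is surjective. By Theorem \ref{cusAW}, the subquotient $\hat V_i/\hat V_{i-1}$ is isomorphic to $F(P,M)$ for a simple weight $K_n^+$ module $P$ and a simple finite-dimensional $\gl_n$ module $M$, and restricting scalars along $W_n^+ \hookrightarrow \tilde W_n^+$ exhibits $V$ as a $W_n^+$ module quotient of $F(P,M)$.

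I expect the main obstacle to be the careful bookkeeping in the middle step: one must verify that the composition series of $\hat V$ as an $AW_n^+$ module interacts correctly with the $W_n^+$ module surjection $\hat\theta$, and in particular that the quotient map $\hat V_i/\hat V_{i-1} \to V$ is a genuine $W_n^+$ module map (this uses that $W_n^+ \subset \tilde W_n^+$ acts compatibly, so every $AW_n^+$ submodule is in particular a $W_n^+$ submodule). A secondary point needing attention is that the composition series is taken in the category of $AW_n^+$ modules rather than $W_n^+$ modules, so the subquotients $\hat V_i/\hat V_{i-1}$ are $AW_n^+$ modules to which Theorem \ref{cusAW} applies directly; the induced map to $V$ is only required to respect the $W_n^+$ action, and simplicity of $V$ as a $W_n^+$ module then forces surjectivity. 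Once these compatibilities are pinned down, the conclusion that $V$ is a simple quotient of some $F(P,M)$ with $M$ finite-dimensional is immediate.
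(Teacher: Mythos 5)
Your proposal is correct and follows essentially the same route as the paper's own proof: form the $A$-cover $\hat V$, invoke Theorem \ref{covercus} for boundedness and Lemma \ref{AWlength} for finite $AW_n^+$ length, locate the unique step $\hat V_{s-1}\subset \hat V_s$ in the composition series where $\theta$ first becomes nonzero (hence surjective, by simplicity of $V$), and identify $\hat V_s/\hat V_{s-1}$ with some $F(P,M)$ via Theorem \ref{cusAW}. The compatibility points you flag (that $AW_n^+$ submodules are $W_n^+$ submodules, so the induced map is a $W_n^+$ map) are exactly the implicit facts the paper relies on.
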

\begin{proof}It is obvious if $V$ is trivial. Suppose that $V$ is nontrivial. Then $W_n^+\cdot V=V$ by the simplicity of $V$.

Let $\hat V$ be the $A$-cover of $V$ and $\theta:\hat V\rightarrow V$ be the $W_n^+$ module homomorphism defined above. From Theorem \ref{covercus}, we know that $\hat V$ bounded. From Lemma \ref{AWlength}, we know that $\hat V$ has finite length as $AW_n^+$ module.
Let
$$0=\hat V_0\subset\hat V_1\subset\dots\subset\hat V_k=\hat V$$
be a composition series of $AW_n^+$-submodules in $\hat V$. Then each $\theta(\hat V_i),i\in\{0,\dots,k\}$, is a $W_n^+$-submodule of $V$. So there is a $s\in\{1,\dots,k\}$ such that $\theta(\hat V_s)=V$ and $\theta(\hat V_{s-1})=0$. Consequently, $V$ must be isomorphic to a simple $W_n^+$-quotient of the simple  bounded weight $AW_n^+$ module $V_s/V_{s-1}$. From Theorem \ref{cusAW}, $V_s/V_{s-1}$  isomorphic to $F(P,M)$ for some simple weight $K_n^+$ module $P$ and some finite dimensional simple $\gl_n$ module $M$. Hence the lemma follows. \end{proof}

Theorem \ref{main2} follows from Lemma \ref{quot} and Lemma \ref{quotient}.

{\bf Ackowledgement.} {This work is partially supported by NSF of China (Grant 11471233, 11771122, 11971440)}.

\vspace{4mm}

 \noindent R.L\"u: Department of Mathematics, Soochow University, Suzhou, P. R. China.  Email: rlu@suda.edu.cn, corresponding author.

\vspace{0.2cm}\noindent Y. Xue.: Department of Mathematics, Soochow University, Suzhou, P. R. China.  Email: yhxue00@stu.suda.edu.cn


\begin{thebibliography}{99999}
%\bibitem[B]{B} R. Block, The irreducible representations of the Lie algebra $\sl(2)$ and of the Weyl
%algebra,  Adv. in Math., 139 (1981), no. 1, 69-110.


\bibitem{B} Y. Billig, Jet modules, Canad. J. Math. 59 (2007), 712–729.

\bibitem{BF1} Y. Billig, V. Futorny, Classification of irreducible representations of Lie algebra of vector fields on a torus,  J. Reine Angew. Math., 720 (2016), 199-216.

\bibitem{BF2} Y. Billig, V. Futorny,  Classification of simple bounded weight modules for solenoidal Lie algebras. Israel J. Math. 222 (2017), no. 1, 109–123.

\bibitem{BMZ} Y. Billig, A. Molev, R. Zhang, Differential equations in vertex algebras and
simple modules for the Lie algebra of vector fields on a torus, Adv.
Math., 218(2008), no.6, 1972-2004.
%\bibitem[BO]{BO}
%G. Benkart, M. Ondrus, Whittaker modules for generalized Weyl algebras, Represent. Theory, 13 (2009) 141-164.
%\bibitem[BV]{BV}  V. V. Bavula,  F. Van Oystaeyen, Simple holonomic modules over the second Weyl algebra $A_2$,
%Adv. Math., 150 (1) (2000), 80-16.

\bibitem{CG} A. Cavaness, D. Grantcharov, Bounded weight modules of the Lie algebra of vector fields on $\bC^2$,  arXiv:1611.01821.

\bibitem{E1} S. Eswara Rao, Irreducible representations of
the Lie-algebra of the diffeomorphisms of a $d$-dimensional torus,
J. Algebra, {\bf 182}  (1996),  no. 2, 401--421.
\bibitem{E2} S. Eswara Rao, Partial classification of modules for Lie algebra of
diffeomorphisms of d-dimensional torus, J. Math. Phys., 45 (8),
(2004) 3322-3333.
 \bibitem{FGM} V. Futorny,  D. Grantcharov,  V. Mazorchuk,   Weight modules over infinite dimensional Weyl algebras, Proc. Amer. Math. Soc., 142 (2014), no. 9, 3049-3057.
\bibitem{GS} D. Grantcharov, V. Serganova, Cuspidal representations of $\mathfrak{sl}(n + 1)$, Adv. Math., 224 (2010)
1517-1547.
 %\bibitem[GLZ]{GLZ} X. Guo, G. Liu and K. Zhao, Irreducible Harish-Chandra modules over extended
%Witt algebras, Ark. Mat., 52 (2014), 99-112.
 %\bibitem[GZ]{GZ} X. Guo and K.Zhao, Irreducible weight modules over Witt algebras, Proc. Amer. Math. Soc.,
%139(2011), 2367-2373.

\bibitem{H} J. E. Humphreys, ``Introduction to Lie Algebras and Representations Theory," Springer-Verlag, Berlin Heidelberg, New York, 1972.

% \bibitem{HM} H. Hauser, G. Muller, The Lie Algebra of Polynomial Vector Fields and the Jacobian
%Conjecture, Monatsh. Math., 126, 211-213(1998).

\bibitem{K2} V. G. Kac, Some problems of infinite-dimensional Lie algebras and their representations, in: Lie algebras and related topics, Lecture Notes in Math. 933, Springer-Verlag, Berlin (1982), 117–126.

\bibitem{L3} T. A. Larsson,  Conformal fields:
A class of representations of Vect (N), Int. J. Mod. Phys., A 7(1992),
6493-6508.



\bibitem{LLZ} G. Liu, R. Lu, K. Zhao, Irreducible  Witt modules from Weyl modules and $\\gl_n$ modules,  J. Algebra 511 (2018), 164-181.
 %\bibitem[EJ]{EJ} S. Eswara Rao, C. Jiang,  Classification of irreducible integrable representations for the full toroidal Lie algebras, J. Pure Appl. Algebra, 200 (2005), no. 1-2, 71-85.
 
 \bibitem{Liu1} D. Liu, Y. Pei, L. Xia, classification of simple weight modules for the $N=2$ superconformal algebra, Arxiv:1904.08578.

%\bibitem[FGR]{FGR}  V. Futorny,  D. Grantcharov,  L. Ramirez,  Singular Gelfand-Tsetlin modules of $\mathfrak{gl}(n)$, Adv. Math., 290 (2016), 453-482.
\bibitem{HL} Y. He, R. L\"u, Weight modules over the Lie algebras of derivations of quantum planes, in preparation.

\bibitem{LX} R. L\"u, Y. Xue, Weight modules over the Lie superalgebra of Cartan $W$-type, in preparation.
\bibitem{LZ2}R. Lu, K. Zhao, Classification of irreducible weight modules over higher rank Virasoro algebras, Adv. Math. 201(2006), no. 2, 630-656.



 \bibitem{MS} V. Mazorchuk, C. Stroppel,  Cuspidal $\mathfrak{sl}_n$ modules and deformations of certain Brauer tree algebras, Adv. Math., 228 (2011), no. 2, 1008-1042.

\bibitem{MZ} V. Marzuchuk, K. Zhao, Supports of weight modules over Witt
algebras, Proc. Roy. Soc. Edinburgh Sect., A 141(2011), no. 1,
155-170.
\bibitem{Ma} O. Mathieu, Classification of Harish-Chandra modules over the Virasoro algebras, Invent. Math. 107(1992), 225-234.

\bibitem{R1}A.N. Rudakov, Irreducible representations of infinite-dimensional Lie algebras of Cartan
type, Izv. Akad. Nauk SSSR Ser. Mat., 38 (1974), 836-866 (Russian); English translation in
Math USSR-Izv., 8 (1974), 836-866.
\bibitem{R2}A.N. Rudakov, Irreducible representations of infinite-dimensional Lie algebras of types S and H,
Izv. Akad. Nauk SSSR Ser. Mat., 39 (1975), 496-511 (Russian); English translation in Math
USSR-Izv., 9 (1975), 465-480.
\bibitem{PS}
{ I.~Penkov, V.~Serganova}, Weight representations of the
polynomial Cartan type Lie algebras $W\sb n$ and $\overline S\sb n$,
Math. Res. Lett., {\bf  6}  (1999),  no. 3-4, 397-416.
\bibitem{Sh} G. Shen, Graded modules of graded Lie algebras of
Cartan type. I. Mixed products of modules,  Sci. Sinica Ser., A {\bf
29}  (1986),  no. 6, 570-581.

\bibitem{Su1}Y. Su, Simple modules over the high rank Virasoro algebras. Commun. Alg. 29 (2001), 2067-2080.
\bibitem{XL} Y. Xue, R. L\"{u}, Simple weight modules with finite dimensional weight spaces over Witt superalgebras, preprint, 2019.
\end{thebibliography}
\end{document}